\newtheorem{prop}{Proposition}[section]
\newtheorem{theorem}[prop]{Theorem}
\newtheorem{lemma}[prop]{Lemma}
\theoremstyle{definition}
\newtheorem{definition}[prop]{Definition}
\theoremstyle{remark}
\newtheorem{remark}[prop]{Remark}
\newtheorem{example}[prop]{Example}
\begin{document}

\title[Refocusing of Light Rays in Space-Time]{Refocusing of Light Rays in Space-Time} 
\author[P.~Kinlaw]{Paul A. Kinlaw}
\address{P.~Kinlaw, Mathematics Department, 6188 Kemeny,  Dartmouth College, Hanover NH 03755, 
USA} 
\email{Paul.Kinlaw@dartmouth.edu}

\maketitle

\begin{abstract}

We investigate refocusing and strong refocusing of light rays in a space-time.  A strongly refocusing space-time is refocusing.  The converse is unknown.  We construct examples of space-times which are refocusing, but not strongly so, at a particular point.  These space-times are strongly refocusing at other points.  The geometrization conjecture proved by Perelman implies that a globally hyperbolic refocusing space-time of dimension 
$\leq 4$ admits a strongly refocusing Lorentz metric.  

We show that the possibly empty set of points at which a strongly causal space-time is refocusing is closed.  We prove that a Lorentz covering space of a strongly causal refocusing space-time is a strongly causal refocusing space-time. This generalizes the result of Chernov and Rudyak for globally hyperbolic space-times.
\end{abstract}

\section{Conventions}
Throughout this paper all manifolds and maps are assumed to be smooth unless stated otherwise.  
\emph{Smooth} means $C^\infty$.  Manifolds are denoted by capital letters and sometimes accompanied by a superscript which indicates the dimension of the manifold.  Points are denoted by lower case letters.  Vectors are generally written in boldface for emphasis.

\section{List of the Main Results}

We establish several results on refocusing.  Their statement requires some knowledge of Lorentz geometry, which we provide in the appendix for the convenience of the reader.  We then discuss several definitions of refocusing and strong refocusing of null-geodesics in a space-time, and prove the following results.\\
\\
\textbf{(i.)}~Theorem~\ref{closedtheorem}.  Let $(X,g)$ be a strongly causal space-time.  Then the possibly empty $\{x\in X: (X,g)$ is refocusing at $x\}$ is a closed subset of $X$.\\
\\
\textbf{(ii.)}~Theorem~\ref{coveringtheorem}.  Let $(X,g)$ be a strongly causal space-time which is refocusing at $x\in X$.  Let $\rho: (\widetilde{X},\widetilde{g})\to (X,g)$ a Lorentz covering map.  Then $(\widetilde{X},\widetilde{g})$ is a strongly causal space-time which is refocusing at each point $\widetilde{x}\in \rho^{-1}(x)$.\\
\\
\textbf{(iii.)}~Example~\ref{ghexample} and~Proposition~\ref{nulltimelikecompleteprop}.  We construct a globally hyperbolic space-time which is refocusing at $x$ but not strongly refocusing at $x$, for all points $x$ of a Cauchy surface.  A conformal change gives a space-time with all of these same properties which is also timelike and null complete.\\
\\
\textbf{(iv.)}~Proposition~\ref{maintheorem4}.  Any globally hyperbolic refocusing spacetime $(X^{n+1},g), n\leq 3$ admits a strongly refocusing metric.  This follows from the Geometrization Conjecture proved by Perelman.\\
\\
\textbf{(v.)}~Proposition~\ref{maintheorem1} and Theorem~\ref{maintheorem2}. We prove that the three definitions of refocusing due to Low are in fact equivalent.  We also give a thorough proof of Low's statement that a strongly causal non-refocusing space-time is homeomorphic with its sky space.

\section{Refocusing}

In Minkowski space-time $(\mathbb{R}^{n+1},dx_1^2+...+dx_n^2-dt^2)$ the light rays are straight lines as in Euclidean space.  Therefore the light rays which pass through a given event never intersect at another event.  On the other hand, all of the light rays passing through any event $(p,t)$ in the Einstein Cylinder $(\mathbb{S}^{n}\times \mathbb{R},g=m\oplus-dt^2)$ also pass through $(-p,t+\pi)$.  (Here, $m$ denotes the standard Riemannian metric on the unit sphere $\mathbb{S}^n\subset \mathbb{R}^{n+1}$.)  This motivates the following definition.

\begin{definition}
A strongly causal space-time $(X^{n+1},g)$ is called \emph{strongly refocusing} at an event $x\in X$ if there is an event $y\neq x$ such that all null-geodesics through $y$ pass through $x$.  We call $(X,g)$ \emph{strongly refocusing} if it is strongly refocusing at some event.
\end{definition}

We will need the following proposition.

\begin{prop}\label{skyembedding}
Let $(X^{n+1},g)$ be a strongly causal space-time.  For each event $x\in X$, its sky $O_x$ is a topologically embedded $(n-1)$-sphere in $\mathcal{N}$.
\end{prop}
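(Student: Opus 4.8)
The plan is to present the sky $O_x$ as the image of a natural continuous injection of the $(n-1)$-sphere of future null directions at $x$ into $\mathcal{N}$, and then to promote this map to a topological embedding by a compactness argument. First I would fix the domain: let $\mathcal{C}^+_x\subset T_xX$ denote the future null cone with the zero vector removed, and let $\mathbb{P}\mathcal{C}^+_x$ be its quotient by the action of $\mathbb{R}_{>0}$ by scaling. In any $g_x$-orthonormal frame this is $\{\mathbf v : v_1^2+\cdots+v_n^2 = v_0^2,\ v_0>0\}/\mathbb{R}_{>0}$, which is diffeomorphic to $\mathbb{S}^{n-1}$. Define $\iota_x\colon \mathbb{P}\mathcal{C}^+_x \to \mathcal{N}$ by sending a direction $[\mathbf v]$ to the (image of the) inextendible null geodesic $\gamma$ with $\gamma(0)=x$ and $\dot\gamma(0)=\mathbf v$; this is well defined since rescaling $\mathbf v$ by a positive constant merely reparametrizes $\gamma$, and by the definition of the sky we have $\iota_x(\mathbb{P}\mathcal{C}^+_x) = O_x$.

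For continuity, recall that $\mathcal{N}$ may be realized as the quotient of the $2n$-dimensional manifold $\mathbb{P}\mathcal{C}^+X$ of future null directions over $X$ by the equivalence relation ``lying on a common unparametrized null geodesic'', with continuous (indeed open) quotient map $q\colon \mathbb{P}\mathcal{C}^+X\to\mathcal{N}$. Then $\iota_x$ is the composite of the smooth inclusion $\mathbb{P}\mathcal{C}^+_x\hookrightarrow \mathbb{P}\mathcal{C}^+X$ of the fiber over $x$ with $q$, and is therefore continuous; equivalently, continuity follows from the fact that an inextendible null geodesic depends continuously on its initial data for the topology on $\mathcal{N}$.

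Injectivity is the only step that uses the causality of $(X,g)$. Suppose $\iota_x([\mathbf v_1]) = \iota_x([\mathbf v_2])$, so the future-directed null geodesics $\gamma_1,\gamma_2$ issuing from $x$ in the directions $\mathbf v_1,\mathbf v_2$ have a common image $\gamma$. Since the velocity of a future-directed null geodesic is future-pointing, $\mathbf v_1$ and $\mathbf v_2$ are positive multiples of the velocities of $\gamma$ at points of its domain that are mapped to $x$. If there is only one such point, then $[\mathbf v_1]=[\mathbf v_2]$; if there are two, the arc of $\gamma$ between them is a nontrivial closed causal curve, contradicting causality, which is implied by strong causality. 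Hence $\iota_x$ is injective.

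Finally, $\mathbb{P}\mathcal{C}^+_x\cong\mathbb{S}^{n-1}$ is compact and $\mathcal{N}$, being a manifold under the strong causality hypothesis, is Hausdorff; a continuous injection from a compact space to a Hausdorff space is a closed map and hence a homeomorphism onto its image. Therefore $O_x$ is a topologically embedded $(n-1)$-sphere in $\mathcal{N}$. I do not expect a serious obstacle here, but the one point that genuinely requires care is ruling out, in the injectivity step, a null geodesic that meets $x$ more than once; this is precisely where a causality hypothesis is unavoidable, since the analogous statement fails on, for instance, a flat spacetime torus, where a null geodesic can be a loop based at $x$.
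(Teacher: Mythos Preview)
Your setup, continuity, and injectivity arguments are fine and essentially match the paper's (your injectivity step is in fact more carefully justified than the paper's one-line claim). The gap is in the final step: you invoke the standard fact that a continuous injection from a compact space into a \emph{Hausdorff} space is a homeomorphism onto its image, asserting that $\mathcal{N}$ is Hausdorff because ``being a manifold under the strong causality hypothesis.'' But this is false. As the paper itself states in Appendix~C, for a strongly causal spacetime $\mathcal{N}$ carries a smooth $(2n-1)$-dimensional manifold structure \emph{aside from the fact that it may fail to be Hausdorff}; the punctured Minkowski spacetime $(\mathbb{R}^{n+1}\setminus\{0\},\,dx_1^2+\cdots+dx_n^2-dt^2)$ is strongly causal yet has non-Hausdorff $\mathcal{N}$. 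So your compactness argument does not go through at the stated level of generality (it would work for globally hyperbolic spacetimes, where $\mathcal{N}\cong ST^*M$ is genuinely Hausdorff).

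The paper avoids this trap by proving openness of $F=\pi|_{SN_xX}$ directly, without any separation hypothesis on the target: for $U\subset SN_xX$ open, write $U=SN_xX\cap V$ with $V$ open in $SNX$; then $F(U)=O_x\cap\pi(V)$, and $\pi(V)$ is open in $\mathcal{N}$ because the quotient map $\pi\colon SNX\to\mathcal{N}$ is a submersion (hence open). That argument uses only the local smooth structure of $\mathcal{N}$, not Hausdorffness, and is what you should substitute for your last paragraph.
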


\begin{proof}
Define the \emph{future null bundle} $FNX\subset TX$ to be the subspace of $TX$ formed by all future-directed null vectors.  The \emph{spherical future null bundle} is defined as the quotient  $SNX=FNX/\mathbb{R}^+$ of $FNX$ by the group action of $\mathbb{R}^+$ by scalar multiplication, see Appendix C.

Let $\pi: SNX\to \mathcal{N}: \mathbf{v}\mapsto [\mathbf{v}]$ be the projection.  Let $SN_xX\subset SNX$ be the subspace of $SNX$ formed by the future null directions at $x$.  Put $F=\pi|_{SN_xX}: SN_xX\to O_x\subset \mathcal{N}$.  Then $F$ is a bijection since it clearly gives a one-to-one correspondence between future null directions at $x$ and null-geodesics through $x$.  Also $F$ is continuous since it is the restriction of the projection map.  It remains to show that $F$ is an open map.  Let $U$ be an open subset of $SN_xX$.  Then $U=SN_xX\cap V$ where $V$ is an open subset of $SNX$.  We have $F(U)=F(SN_xX\cap V)=\pi(SN_xX\cap V)=\pi(SN_xX)\cap \pi(V)$=$O_x\cap \pi(V)$.  This is an open subset of $O_x$ because $\pi(V)$ is an open subset of $\mathcal{N}$.  This is true because $\pi$ is an open map since it is a submersion with respect to the smooth structure on $\mathcal{N}$.  (See for instance \cite[p. 205]{BrickellClark} where $\mathcal{N}$ is viewed as the set of leaves of the spherical null distribution on $X$.)
\end{proof}

\begin{prop}\label{mainprop3}
Let $(X,g)$ be a strongly causal space-time.  Define a relation $\sim$ on $X$ by $x\sim y$ if and only if all null-geodesics through $x$ pass through $y$.  Then  $\sim$  is an equivalence relation, making $X/\sim$ into a quotient space with respect to the manifold topology of $X$.
\end{prop}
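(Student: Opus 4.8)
The plan is to prove the three assertions of Proposition~\ref{mainprop3} in turn: that $\sim$ is reflexive, that it is symmetric, that it is transitive, and finally to observe that no extra hypothesis is needed to form the quotient space. Reflexivity is immediate, since every null-geodesic through $x$ passes through $x$. Transitivity is also essentially immediate: if every null-geodesic through $x$ passes through $y$ and every null-geodesic through $y$ passes through $z$, then in particular each null-geodesic through $x$, being a null-geodesic through $y$, passes through $z$. The only substantive point is symmetry, and this is where I expect the main obstacle to lie.

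For symmetry, suppose $x\sim y$, so that every null-geodesic through $x$ passes through $y$; I must show every null-geodesic through $y$ passes through $x$. The key idea is a counting/dimension argument on skies. By Proposition~\ref{skyembedding}, the sky $O_x$ and the sky $O_y$ are each topologically embedded $(n-1)$-spheres in $\mathcal{N}$. The hypothesis $x\sim y$ says precisely that $O_x\subseteq O_y$: every light ray (unparametrized null-geodesic) through $x$ is also a light ray through $y$. But a topologically embedded $(n-1)$-sphere cannot be a proper subset of another topologically embedded $(n-1)$-sphere — by invariance of domain, $O_x$ is open in $O_y$, and it is also compact hence closed in the Hausdorff space $O_y$, so by connectedness of $O_y$ (for $n\geq 2$) we get $O_x=O_y$. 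Once $O_x=O_y$, every null-geodesic through $y$ lies in $O_y=O_x$ and hence passes through $x$, giving $y\sim x$. I would handle the low-dimensional case $n=1$ (where $O_x$ is a $0$-sphere, i.e.\ two points) separately or note it by the same clopen argument on the finite discrete set, or simply restrict attention to $n\geq 2$ as is implicit elsewhere in the paper.

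The main obstacle, then, is making the inclusion-of-spheres rigidity argument airtight: one must be careful that $O_x$ is genuinely a subset of $O_y$ as \emph{subsets of $\mathcal N$} (not merely that the geodesics coincide as point sets in $X$, which could differ by orientation or parametrization — but since $\mathcal N$ is defined as the space of unparametrized null-geodesics, or skies, this is exactly the right notion), and that invariance of domain applies, which it does since both are manifolds of the same dimension $n-1$ and $\mathcal N$ carries the smooth structure from Proposition~\ref{skyembedding}. An alternative to invariance of domain, if one wants to avoid it, is to use the fact that a strongly causal space-time has the property that skies are either disjoint or equal when the underlying points are ``refocusing-related'' — but the cleanest route is the topological one above.

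Finally, for the quotient-space assertion there is nothing to prove beyond what has been established: for any equivalence relation $\sim$ on a topological space $X$, the quotient $X/\sim$ is automatically a topological space when equipped with the quotient topology. So once $\sim$ is shown to be an equivalence relation, $X/\sim$ is a quotient space with respect to the manifold topology of $X$ by definition of the quotient topology, and the proof is complete.
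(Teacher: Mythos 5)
Your proof is correct, and the overall decomposition matches the paper's: reflexivity and transitivity are dispatched exactly as the paper does, and symmetry is reduced, via Proposition~\ref{skyembedding}, to the rigidity statement that a topologically embedded $(n-1)$-sphere $O_x$ contained in the $(n-1)$-sphere $O_y$ must equal it. Where you differ is in the topological tool used to prove that rigidity. The paper argues by contradiction: if the containment were proper, $O_x$ would embed in the punctured sphere $O_y\setminus\{\gamma\}\cong\mathbb{R}^{n-1}$, which is impossible because Borsuk--Ulam forbids an injective continuous map $\mathbb{S}^{n-1}\to\mathbb{R}^{n-1}$. You instead apply invariance of domain to the inclusion $O_x\hookrightarrow O_y$ (a continuous injection between $(n-1)$-manifolds) to see $O_x$ is open in $O_y$, note it is closed since it is compact and $O_y\cong\mathbb{S}^{n-1}$ is Hausdorff (a point worth flagging, since $\mathcal{N}$ itself need not be Hausdorff, but you correctly work inside $O_y$), and conclude by connectedness. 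The two tools are of comparable depth, so neither route is more elementary; the main practical difference is at $n=1$: the paper's Borsuk--Ulam formulation degenerates gracefully (an embedding of $\mathbb{S}^0$ into a point is impossible), while your connectedness step fails for $\mathbb{S}^0$ and you must, as you note, fall back on counting the two-element skies, which works. Your closing remark that the quotient-space clause is automatic once $\sim$ is an equivalence relation is also consistent with the paper, which likewise offers no separate argument for it.
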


\begin{proof}
The relation is clearly reflexive.  It is also transitive.  (If all null-geodesics through $x$ pass through $y$ and all of the null-geodesics through $y$ pass through $z$, then all of the null-geodesics through $x$ must pass through $z$.)  We prove symmetry.

Suppose that all of the null-geodesics through $x$ pass through $y$.  Then we have the following containment of skies: $O_x\subset O_y$.  Suppose by way of contradiction that the containment $O_x\subset O_y$ is proper, i.e. there is a null-geodesic $\gamma$ belonging to $O_y$ but not $O_x$.  The skies $O_x$ and $O_y$ are topologically embedded $(n-1)$-spheres in the space $\mathcal{N}$ of null-geodesics in $X$ (modulo orientation-preserving affine reparametrization) by Proposition~\ref{skyembedding}.   Therefore the $(n-1)$-sphere $O_x$ is embedded in a proper subset of the punctured $(n-1)$-sphere $O_y\setminus \{\gamma\}$, which is homeomorphic with $\mathbb{R}^{n-1}$.  The Borsuk-Ulam Theorem states that any continuous map $f: \mathbb{S}^{n-1}\to \mathbb{R}^{n-1}$ satisfies $f(p)=f(-p)$ for some $p\in \mathbb{S}^{n-1}$, see \cite[p. 176]{Hatcher}.  Therefore, such a map fails to be injective and hence cannot be an embedding.
\end{proof}

\begin{example}
If a globally hyperbolic space-time $X$ is not strongly refocusing, then the equivalence classes are singleton sets of events and $X/ \sim = X$.  On the other hand, the quotient $\mathbb{S}^n\times [0,\pi]/\sim$ of the strongly refocusing Einstein Cylinder $\mathbb{S}^n\times \mathbb{R}$ under the equivalence relation $(p,0)\sim (-p,\pi)$ is a space-time called \emph{compactified Minkowski space-time}.  This space-time is central to the study of twistor theory, see \cite[Section 9.2, page 298]{PR}.  When $n=1$ it is homeomorphic to the torus.  
\end{example}

We begin the discussion of refocusing with a preliminary proposition regarding the local structure of strongly causal space-times.  We will use this result to demonstrate a proof of the equivalence of three definitions of refocusing introduced by Low.

\begin{prop}\label{mainprop1}
If a space-time is strongly causal at an event $p$, then for every open neighborhood $U$ of $p$ there exists an open neighborhood $V$ of $p$ contained in $U$ which is causally convex and convex normal with compact closure.  Furthermore any such $V$ is globally hyperbolic, cf~\cite[Theorem 2.14]{MS}.
\end{prop}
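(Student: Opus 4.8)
The plan is to construct $V$ as a small causal diamond sitting inside both a convex normal neighborhood and a causally convex neighborhood of $p$, and then to deduce global hyperbolicity of any such $V$ from its causal convexity together with the good behaviour of causal diamonds in a convex normal set. First I would use the standard fact that convex normal neighborhoods form a basis for the topology, together with local compactness of $X$, to fix a convex normal neighborhood $C$ of $p$ with $\overline C$ compact and $\overline C\subseteq U$. Since $(X,g)$ is strongly causal at $p$, the neighborhood $C$ then contains a causally convex open neighborhood $W$ of $p$; this is essentially the definition of strong causality at a point. Finally, for $q\in I^-(p,C)$ and $r\in I^+(p,C)$ I would form the causal diamonds $V_{q,r}:=I^+(q,C)\cap I^-(r,C)$: these are open neighborhoods of $p$ whose compact closures are nested as $q,r\to p$ and have intersection $\{p\}$, since a convex normal set contains no closed causal curves, so a finite-intersection argument yields $q,r$ with $\overline{V_{q,r}}\subseteq W$. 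Put $V:=V_{q,r}$, shrinking $q$ and $r$ further toward $p$ if the estimate below requires it.

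The elementary properties are then immediate: $\overline V$ is compact, being closed in the compact set $\overline C$, and $\overline V\subseteq\overline C\subseteq U$. For causal convexity of $V$ in $X$, let $\sigma$ be a future-directed causal curve with past endpoint $a\in V$ and future endpoint $b\in V$; since $a,b\in W$ and $W$ is causally convex, $\sigma\subseteq W\subseteq C$, and for any point $s$ of $\sigma$, splicing a future timelike curve in $C$ from $q$ to $a$ onto the sub-arc of $\sigma$ from $a$ to $s$ exhibits $s\in I^+(q,C)$; dually $s\in I^-(r,C)$. Hence $\sigma\subseteq V$.

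The one genuinely delicate point is that $V$ is convex normal. Since $V\subseteq C$ and $C$ is convex normal, this reduces to geodesic convexity of $V$: the $C$-geodesic joining any two points of $V$ must stay in $V$. If the two points are causally related, that geodesic is causal and the splicing argument above already keeps it in $V$; the remaining case is a spacelike geodesic $\gamma_{ab}$, where one must check that $\gamma_{ab}$ leaves neither $I^+(q,C)$ nor $I^-(r,C)$. I expect this to be the main obstacle. The natural approach is to use that $\rho_q(x):=g_q(\exp_q^{-1}x,\exp_q^{-1}x)$ has covariant Hessian equal to $2g$ plus a curvature term that is uniformly small on a small $C$, so that it is convex along spacelike geodesics once $C$ is small enough; since $\rho_q<0$ at $a$ and at $b$ it then stays negative along $\gamma_{ab}$, which (with a connectedness argument pinning down the time orientation) gives $\gamma_{ab}\subseteq I^+(q,C)$, and symmetrically for $r$. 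This is where the extra shrinking of $q$ and $r$ is used. Alternatively, the existence of a basis at a point of strong causality consisting of causally convex convex normal neighborhoods with compact closure may simply be quoted from \cite{MS}.

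For the final assertion, let $V$ be \emph{any} causally convex, convex normal neighborhood with $\overline V$ compact. Being convex normal, $V$ is strongly causal, in particular causal. For $a,b\in V$, causal convexity of $V$ shows that every causal curve of $X$ from $a$ to $b$ lies in $V$; splitting such curves at intermediate points gives $J^+(a,V)\cap J^-(b,V)=J^+(a,X)\cap J^-(b,X)$, and this set lies in $V$. In the convex normal set $V$ with compact closure it coincides with the geodesic causal diamond $\{x\in V:\gamma_{ax}\text{ and }\gamma_{xb}\text{ are future causal}\}$, which is compact. Thus $V$ is causal with all of its causal diamonds compact, hence globally hyperbolic, as in \cite[Theorem 2.14]{MS}.
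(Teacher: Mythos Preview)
Your approach differs substantially from the paper's for the existence of $V$. The paper does not attempt any explicit construction: it simply quotes \cite[Theorem 4.27]{BEE}, which already furnishes arbitrarily small neighborhoods that are simultaneously convex normal and causally convex, and then invokes local compactness (choosing a precompact neighborhood first and applying the cited theorem inside it). This bypasses entirely the obstacle you correctly flagged, namely geodesic convexity of a causal diamond $I^+(q,C)\cap I^-(r,C)$ with respect to \emph{spacelike} geodesics.

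On that obstacle: your Hessian idea is the right heuristic, but as stated it is not quite enough. The covariant Hessian of $\rho_q$ is $2g$ plus a curvature correction that is small in a background Riemannian norm, not small relative to $g$ itself. For spacelike $v$ that are nearly null, $g(v,v)$ can be arbitrarily small while the correction term need not be, so convexity of $\rho_q$ along such geodesics does not follow from shrinking $C$ alone. One would need a sharper argument (e.g.\ exploiting the specific form of the Hessian via Jacobi fields, or a comparison with the flat case). Your fallback of quoting the result is exactly what the paper does, just citing \cite{BEE} rather than \cite{MS}.

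For the global hyperbolicity of any such $V$, your argument is close to the paper's but organized slightly differently. The paper first notes that $V$ is strongly causal (because $X$ is strongly causal at every point of $V$, which it arranged using openness of the strongly causal set), then uses causal convexity to place $J^+(x)\cap J^-(y)$ inside $V$, observes it is closed in $V$ by \cite[Proposition 3.4]{BEE}, and finally appeals to \cite[Lemma 4.29]{BEE}, which only requires the \emph{closures} $\overline{J^+(x)\cap J^-(y)}$ to be compact---immediate from $\overline V$ compact. Your direct claim that the geodesic diamond itself is compact is true, but the paper's route via compact closure plus \cite[Lemma 4.29]{BEE} avoids having to argue that the diamond cannot accumulate on $\partial V$.
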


\begin{remark}
We say that there exist \emph{arbitrarily small} open neighborhoods of $p$ with a given property $P$ if for all open neighborhoods $U$ of $p$ there exists an open neighborhood $V$ of $p$ contained in $U$ with property $P$.

Similarly we say that all \emph{sufficiently small} open neighborhoods of $p$ have property $P$ if there exists an open neighborhood $U$ of $p$ such that all open neighborhoods $V$ of $p$ contained in $U$ have property $P$.
\end{remark}

\begin{proof}
Let $(X^{n+1},g)$ be a space-time which is strongly causal at $p\in X$ and let $U$ be an open neighborhood of $x$.  The set of events at which a space-time is strongly causal is open in $X$ \cite[Proposition 4.13]{Penrose}.  Therefore, there is an open neighborhood $W$ of $p$ contained in $U$ such that $X$ is strongly causal at all points of $W$.  (If $W$ is not contained in $U$ then replace $W$ with $W\cap U$.)  There exists a convex normal and causally convex open neighborhood $V$ of $p$ contained in $W$ \cite[Theorem 4.27]{BEE}.  By local compactness of manifolds, we may also assume $V$ has compact closure.  We show $(V,g|_{V})$ is globally hyperbolic.  The space-time $(V,g|_{V})$ is strongly causal because $(X,g)$ is strongly causal at each point of $V$ and hence $(V,g|_{V})$ is strongly causal at each point of $V$.  Let $x,y\in V$.  Then $C=J^{+}(x)\cap J^{-}(y)$ is contained in $V$ by causal convexity. By Proposition~\ref{3.4BEE} \cite[Prop 3.4]{BEE}, $C$ is a closed subset of V.

Since $X$ is strongly causal it suffices by \cite[Lemma 4.29]{BEE} to show that $cl(C)$ is compact, which is clear since $cl(C)\subset cl(V)$.
\end{proof}

We defined strong refocusing above.  We now address refocusing.  We show that the following three definitions of refocusing introduced by Low in his papers \cite[p. 5]{LowCones}, \cite[p. 6]{LowNullgeodesics}, \cite[p. 46]{LowRefocusing} are equivalent.  Low did not discuss the relationship between these definitions in his papers.  He encouraged me to write a formal proof and I greatly appreciate the conversations with him regarding this subject.

\begin{prop}\label{maintheorem1}
Let $(X,g)$ be a strongly causal space-time.  The following three definitions of \emph{refocusing} at an event $x\in X$ are equivalent.\\
 \textbf{(i.)}There exists an open neighborhood $V$ of $x$ such that for all open $U$ with $x\in U\subset V$, there exists an event $y\notin U$ such that all null-geodesics through $y$ enter $U$.\\
\textbf{(ii.)}There exists an open neighborhood $V$ of $x$ such that for all open $U$ with $x\in U\subset V$, there exists an event $y\notin V$ such that all null-geodesics through $y$ enter $U$.\\
\textbf{(iii.)}There exists an open neighborhood $V$ of $x$ and a sequence $\{y_{n}\}_{n=1}^\infty$ in $V^{c}$ such that for all open $U$ with $x\in U\subset V$, there exists $K\in \mathbb{N}$ such that if $n\geq K$ then all null-geodesics through $y_{n}$ enter $U$.
\end{prop}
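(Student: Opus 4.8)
The plan is to prove the cycle of implications $(i.) \Rightarrow (iii.) \Rightarrow (ii.) \Rightarrow (i.)$, since this is the most economical route. Two of these implications are essentially immediate. For $(ii.) \Rightarrow (i.)$: given the neighborhood $V$ from $(ii.)$, simply take $V' = V$ in $(i.)$; any $y \notin V$ is certainly not in $U \subset V$, so the witness $y$ from $(ii.)$ serves as the witness in $(i.)$. For $(iii.) \Rightarrow (ii.)$: given $V$ and the sequence $\{y_n\} \subset V^c$ from $(iii.)$, fix any open $U$ with $x \in U \subset V$; by $(iii.)$ there is $K$ with all null-geodesics through $y_K$ entering $U$, and $y_K \in V^c$, so $y_K$ is the desired witness for $(ii.)$ with the same $V$.

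The substantive implication is $(i.) \Rightarrow (iii.)$. Here I would first invoke Proposition~\ref{mainprop1} to shrink: replace the $V$ from $(i.)$ by a causally convex, convex normal neighborhood $V_0$ of $x$ with compact closure, contained in the original $V$; since $(i.)$ quantifies over all open $U$ with $x \in U \subset V$, the weaker statement over $U \subset V_0$ still holds and will be what we use. Now I would build the sequence $\{y_n\}$ by a diagonal/exhaustion argument. Choose a countable neighborhood basis $\{U_n\}_{n=1}^\infty$ at $x$ with $U_{n+1} \subset U_n$ and each $U_n$ open with $x \in U_n \subset V_0$ — e.g.\ metric balls of radius $1/n$ in a chart. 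For each $n$, property $(i.)$ applied with the open set $U_n$ yields an event $y_n \notin U_n$ such that all null-geodesics through $y_n$ enter $U_n$. The claim is that this sequence works for $(iii.)$ with neighborhood $V := V_0$, once we arrange $y_n \in V_0^c$, and that the "eventually enters $U$" condition holds for every open $U \ni x$ with $U \subset V_0$.

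The main obstacle is exactly the requirement $y_n \in V^c$, i.e.\ the witnesses must lie outside a \emph{single} neighborhood rather than outside the shrinking $U_n$'s. To handle this I would argue as follows: pass to a nested basis with $U_1 \subset V_0$ chosen small, and observe that if some $y_n$ produced by $(i.)$ happened to lie in $V_0$, we can replace it. Concretely, the honest fix is to apply $(i.)$ not with $U_n$ but with a fixed small $W$ (say $W = U_1$) to first obtain \emph{some} witness $y_* \notin W$ whose null-geodesics all enter $W \subset V_0$; then for the sequence, note that the set of witnesses for $(i.)$ relative to $U_n$ that also avoid $V_0$ is nonempty for all large $n$ — because any witness whose null-geodesics all enter $U_n$ for $n$ large cannot itself be too close to $x$ (else strong causality and convex normality of $V_0$, via Proposition~\ref{skyembedding} and the sky-containment argument in Proposition~\ref{mainprop3}, would force $y_n$ into the equivalence class of $x$, contradicting $y_n \notin U_n$ combined with the fact that an accumulation of such $y_n$ at $x$ would make the skies coincide). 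Thus for $n$ beyond some $N_0$ we may select $y_n \in V_0^c$ with all null-geodesics through $y_n$ entering $U_n$; reindex to discard the first $N_0$ terms. Finally, verify $(iii.)$: given open $U$ with $x \in U \subset V_0$, basis-ness gives $m$ with $U_m \subset U$, and then for all $n \geq \max(m, N_0+1)$ the null-geodesics through $y_n$ enter $U_n \subset U_m \subset U$, so $K = \max(m, N_0+1)$ works. I expect the delicate point to be making the "witnesses cannot cluster at $x$" step fully rigorous; it may be cleaner to instead directly argue that \emph{any} sequence $y_n$ with all null-geodesics entering $U_n \downarrow \{x\}$ stays a bounded distance from $x$, using that otherwise a subsequential limit $y_\infty$ would satisfy $y_\infty \sim x$ hence (by symmetry, Proposition~\ref{mainprop3}) $x \sim y_\infty$, and a limiting/openness argument on skies would then place infinitely many $y_n$ inside any prescribed $U_n$, the contradiction.
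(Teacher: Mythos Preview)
Your overall architecture matches the paper's: the cycle $(iii.)\Rightarrow(ii.)\Rightarrow(i.)$ is immediate, and the content lies entirely in $(i.)\Rightarrow(iii.)$. You also correctly shrink $V$ via Proposition~\ref{mainprop1} and build the sequence from a nested basis $\{U_n\}$. The difficulty you isolate---forcing the witnesses $y_n$ outside a \emph{fixed} $V$ rather than the shrinking $U_n$---is exactly the crux.

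However, your proposed resolution of this crux has a genuine gap. You invoke Propositions~\ref{skyembedding} and~\ref{mainprop3} to argue that if $y_n\to x$ then some sky-coincidence $y_\infty\sim x$ forces a contradiction. But Proposition~\ref{mainprop3} concerns the relation ``all null-geodesics through $y$ pass through $x$'', whereas your hypothesis is only ``all null-geodesics through $y_n$ enter $U_n$''. These are not the same, and no limiting argument bridges them: if $y_n\to x$ then the subsequential limit is $x$ itself, and $x\sim x$ is vacuous. Likewise your fallback (``a limiting/openness argument on skies would then place infinitely many $y_n$ inside any prescribed $U_n$'') is not justified---there is no reason the skies $O_{y_n}$ control where $y_n$ sits relative to $U_n$. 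You also do not address the subcase where the $y_n$ accumulate at some $\hat y\in \overline{V_0}$ with $\hat y\neq x$; there one must shrink $V$ again.

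The paper's argument for this step is quite different and more hands-on. It chooses the basis sets in Alexandrov form $U_n=I^+(p_n)\cap I^-(q_n)$, uses compactness of $\overline{V}$ to extract a convergent subsequence $y_{n_k}\to\hat y$, disposes of $\hat y\neq x$ by shrinking $V$, and then---for the heart of the matter---shows by a five-case causal analysis (using the local Lorentz distance, causal convexity, and global hyperbolicity of $V$) that \emph{no} point $y\in V\setminus U$ can have all its null-geodesics entering $U=I^+(p)\cap I^-(q)$. This last fact is what actually rules out $\hat y=x$, and it is precisely the piece your sketch is missing.
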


\begin{definition}\label{refocusing}
A strongly causal space-time $(X,g)$ is \emph{refocusing} at an event $x\in X$ if one (and hence all) of i-iii hold above.  A strongly causal space-time is \emph{refocusing} if it is refocusing at some event.
\end{definition}

\begin{remark}
The open neighborhood $V$ of $x$ is necessary in the definition since the statement is meant to be true only for sufficiently small neighborhoods $U$ of $x$.  Otherwise, putting $U=X$ would show that all space-times would be non-refocusing for trivial reasons.

The distinction between the first two definitions of refocusing at $x$ is that "$y\notin U$" is replaced by "$y\notin V$".

Note that the definition of strong refocusing and all three definitions of refocusing make sense even when $(X,g)$ is not strongly causal.  However in this case we do not know if these three definitions remain equivalent.  In his works Low considered refocusing only for strongly causal space-times.  The implications (iii.)$\Rightarrow$(ii.)$\Rightarrow$(i.) hold for arbitrary space-times but our proof of the implication (i.)$\Rightarrow$(iii.) uses strong causality.

Clearly, strong refocusing implies refocusing.  The converse is not clear.  Refocusing is sometimes referred to as \emph{weak refocusing} to emphasize the distinction from strong refocusing.  The word refocusing is also spelled \emph{refocussing}.
\end{remark}

\begin{proof}
The implication (iii)$\Rightarrow$(ii) is obvious since one puts $y=y_{K}$.  The implication (ii)$\Rightarrow$(i) is also obvious since $V^{c}\subset U^{c}$.

We show (i)$\Rightarrow$(iii).  Suppose (i.) and note that we can replace $V$ with any open neighborhood of $x$ contained in $V$.  The space-time $X$ is strongly causal so by Proposition~\ref{mainprop1} we may choose $V$ sufficiently small so that it is causally convex and convex normal with compact closure and thus a globally hyperbolic open neighborhood of $x$.

Let $\{U_{n}\}_{n=1}^\infty$ be a countable decreasing neighborhood base at $x$ contained in $V$.  (Such a neighborhood base exists for topological reasons).  Moreover, we can choose this base to be of the form $U_{n}=I^{+}(p_{n})\cap I^{-}(q_{n})$ since by Theorem~\ref{KP}, \cite[Theorem 4.24]{Penrose} the Alexandrov and manifold topologies agree for a strongly causal space-time.  This yields a sequence $\{y_{n}\}_{n=1}^\infty$ of events in $X$ such that for each $n\in \mathbb{N}$ we have $y_{n}\notin U_{n}$ and all null-geodesics through $y_n$ enter $U_n$.  Without loss of generality we may assume $U_n\subset V$ for all $n\in \mathbb{N}$.  It suffices also to assume that all but finitely many $y_n$ lie inside $V$, since if infinitely many $y_n$ lie outside $V$ we are done.

Since $cl(V)$ is compact, there exists a subsequence $\{y_{n_k}\}_{k=1}^\infty$ in $V$ converging to a point $\hat{y}\in cl(V)$.  If $\hat{y}\neq x$, then there exist disjoint open neighborhoods $W_1, W_2$ of $\hat{y},x$, respectively.  Assume without loss of generality that $V\subset W_{2}$.  Since $y_{n_k}$ converges to $\hat{y} \in W_1$, we have $y_{n_k}\in V$ for $k$ sufficiently large, giving (iii).  It remains to show that $\hat{y}\neq x$.

By way of contradiction, suppose $y_{n_k}$ converges to $x$.  Re-indexing gives a sequence $\{y_n\}_{n=1}^\infty$ in $V$ converging to $x$ such that all light rays through $y_n$ enter $U_n$, where $\{U_n\}_{n=1}^\infty$ is a countable decreasing base at $x$ contained in $V$ with $U_n=I^{+}(p_n)\cap I^-(q_n)$ and $y_n\notin U_n$.  We show this is impossible.  For convenience, given $n\in \mathbb{N}$ put $y=y_n$, $U=U_n= I^+(p)\cap I^-(q), p=p_n, q=q_n$.  We will show that such a point $y$ does not exist and hence such a sequence also does not exist.  It suffices to consider the following five cases:\\
Case I: $y\in J^-(p)$\\
Case II: $y\in J^+(q)$\\
Case III: $y\notin I^+(p)\cup I^-(q)$\\
Case IV: $y\in I^-(q)\setminus J^-(p)$\\
Case V: $y\in I^+(p)\setminus J^+(q)$

To see this, if case III does not hold then $y\in I^+(p)$ or $y\in I^-(q)$ so one of the remaining four cases must hold.

\textbf{Case I:} Suppose $y\in J^-(p)$.  Then no null-geodesics through $y$ enter $U=I^+(p)\cap I^-(p)$.

Indeed, assume by way of contradiction that a null-geodesic through $y$ enters $U$.  Orient the null-geodesic so that it first passes through $y$ and then enters $U$.  Since $y\in J^-(p)$ the null-geodesic is future-directed.  If there exists a future-directed null-geodesic from $y$ to some point $a\in U$, then $d(y,a)=D(y,a)=0$.  Here $D$ is a local Lorentz distance defined on $V\times V$, see \cite[p. 160]{BEE}.  The first equality holds by \cite[Theorem 4.27]{BEE} and the second equality holds by the remarks below \cite[Definition 4.25]{BEE}.  We therefore have $y\leq p<a$ so $y<a$, thus by the same remarks in \cite{BEE} we have $d(y,a)>0$, a contradiction.

\textbf{Case II:} Suppose $y\in J^+(q)$.  Then no null-geodesics through $y$ can enter $U$.

Suppose by way of contradiction that a null-geodesic through $y$ enters $U$.  Orient the null-geodesic so that it first passes through $y$ and then through $U$.  Since $y\in J^+(q)$ the null-geodesic is past-directed.  If there exists a past-directed null-geodesic from $y$ to some point $a\in U$, then $d(a,y)=D(a,y)=0$ (as in case I) but $a<q\leq y$ so $a<y$, thus $d(a,y)>0$, a contradiction.

Note that this case is \emph{future-past analogous} to case I, in the sense that we obtain a symmetric proof by interchanging the roles of $p$ and $q$; $+$ and $-$; future and past.

\textbf{Case III:} Suppose $y\notin I^+(p)\cup I^-(q)$.  Then no null-geodesics through $y$ can enter U.

To see this, suppose there exists a future-directed null-geodesic from $y$ to $a\in U$.  Then $y\leq a<q$ so $y<q$, a contradiction.  On the other hand, suppose there exists a past-directed null-geodesic from $y$ to $a\in U$.  Then $p<a\leq y$ so $p<y$, a contradiction.

\textbf{Case IV:} If $y\in I^-(q)\setminus J^-(p)$ then not all null-geodesics through $y$ enter U.

Suppose by way of contradiction that all null-geodesics through $y$ enter $U$.  Consider a null-geodesic from $y$ to some point $a\in U$. This null-geodesic enters $U$ at a point $b\in J^+(p)\setminus I^+(p)$, see Proposition~\ref{3.4BEE}, \cite[Proposition 3.4]{BEE}.  There exists a null-geodesic from $p$ to $b$ by \cite[Proposition 3.4]{BEE}.  Extend it until it leaves $I^-(q)$. This null-geodesic must eventually leave by the compactness of $J^+(p)\cap J^-(q)$, which follows from the global hyperbolicity of $V$.  Again by \cite[Proposition 3.4]{BEE} this occurs at a point $c\in (J^+(p)\cap J^-(q))\setminus U$.  Then $y\leq c$ since $y\leq b$ and $b\leq c$.

This means $y<c$ since if there exists a null-geodesic from $y$ to $c$ then this null-geodesic cannot enter $U$, a contradiction.  The reason the null-geodesic from $y$ to $c$ cannot enter $U$ is that this cannot happen in the future of $c$ (otherwise we would have $c\in I^-(q)$ which is not the case).  Suppose the null-geodesic from $y$ to $c$ passes through $U$ after $y$ and before $c$.  If $y\in \partial U$ so that $y=b$ this is impossible since the null-geodesic from $p$ to $c$ never enters $U$.  Therefore it suffices to assume $y\notin \partial U$ in which case the null-geodesics from $y$ to $b$ and from $b$ to $c$ must be distinct.  However, by \cite[Lemma 2.16]{Penrose} this implies $c\in I^+(y)$ which contradicts the fact that there is a null-geodesic from $y$ to $c$.

On the other hand, we cannot have $y<c$ either.  It suffices to show there exists a null-geodesic from $y$ intersecting the null-geodesic from $q$ through $c$, say at some point $c'\leq q$.

We explain why this is sufficient to complete the proof.  Observe that $c'$ cannot lie in the past of $c$.  Indeed, assume $c'<c$.  Then the null-geodesic from $y$ to $c'$ never passes through $U$.  To see this, the null-geodesic cannot enter $U$ in the past of $c'$ since this would mean $c'\in I^+(p)$ and hence $c\in I^+(p)$, a contradiction.  On the other hand, the null-geodesic cannot enter $U$ in the future of $c'$ since this would imply $c'\in I^-(q)$, a contradiction.

Therefore $c'$ must lie strictly between $q$ and $c$ so we obtain a contradiction to the Lorentz triangle inequality: $y<c$ and $c\leq c'$ so $y<c'$, but there exists a null-geodesics from $y$ to $c'$.

We now turn to the existence of a null-geodesic from $y$ to such a point $c'$.  The open set $V$ is globally hyperbolic so $J^+(y)\cap J^-(q)$ is compact and contained in $V$.  Note that $q\in J^+(y)$ because $y\leq b\leq c\leq q$.  The past-directed null-geodesic from $q$ through $c$ must eventually leave the compact subset $J^+(y)\cap J^-(q)\subset V$, at a point $c'$.  By \cite[Proposition 3.4]{BEE} there exists a null-geodesic from $y$ to $c'$.

\textbf{Case V:} Suppose $y\in I^+(p)\setminus J^+(q)$.  This is future-past analogous to case IV.

\end{proof}

\begin{example}
Similarly if $(M,g)$ is a quotient of the unit sphere metric by a finite group of isometries (e.g. a three-dimensional Lens space), then the Lorentz product manifold $(M\times \mathbb{R},g\oplus-dt^2)$ is strongly refocusing at every event.
\end{example}

\begin{example}
We generalize the previous example.  A \emph{$Y^x_l$-manifold} is a complete connected Riemannian manifold all of whose unit speed geodesics starting at $x\in M$ return to $x$ at time $l$, see \cite[p. 181]{Besse}, \cite{CKS}.  If $(M,g)$ is a quotient of any \emph{$Y^x_l$-manifold} by a finite group of isometries, then the Lorentz product manifold $(M\times \mathbb{R},g\oplus-dt^2)$ is strongly refocusing at $(x,t)$ for all $t\in \mathbb{R}$, see~\cite[Remark 7]{CR}.
\end{example}

We prove a proposition which gives another characterization of refocusing that will be useful in later sections.

\begin{prop}\label{closedprop}
Suppose that a strongly causal space-time $(X,g)$ is refocusing at $x\in X$.  Then, given any causally convex, convex normal open neighborhood $V$ of $x$ with compact closure, for all smaller open neighborhoods $U$ of $x$ there exists a point $y\notin V$ such that all the null-geodesics  through $y$ enter $U$.  

In other words, the neighborhood $V$ in the definition of refocusing may be replaced with any neighborhood of $x$ which is causally convex and convex normal with compact closure.
\end{prop}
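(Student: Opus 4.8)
The plan is to produce the required point $y$ directly, getting it either as one of the points already handed to us by refocusing at $x$ or, failing that, as a limit of such points. Fix the given causally convex, convex normal neighborhood $V$ of $x$ with compact closure, fix an open $U$ with $x\in U\subset V$, and let $V_0$ be a neighborhood of $x$ witnessing refocusing at $x$ in the sense of clause (ii) of Proposition~\ref{maintheorem1}. Fix a complete auxiliary Riemannian metric $h$ on $X$ and a decreasing countable neighborhood base $U_1\supset U_2\supset\cdots$ at $x$ with $U_m\subset U\cap V_0$ for every $m$. Applying clause (ii) to each $U_m$ yields a point $y_m\notin V_0$ such that every null-geodesic through $y_m$ enters $U_m$, and hence enters $U$. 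If some $y_m$ lies outside $V$, it is the required point. Otherwise $y_m\in V\setminus V_0$ for all $m$, and since $cl(V)$ is compact some subsequence $y_{m_j}$ converges to a point $\hat y\in cl(V)$; because $V_0^{c}$ is closed, $\hat y\notin V_0$, so in particular $\hat y\neq x$.

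The crux is to show that \emph{every null-geodesic through $\hat y$ passes through $x$}. To prove this I would fix a future null direction $v$ at $\hat y$, use the fiber bundle structure of the future null bundle (as in the proof of Proposition~\ref{skyembedding}) to choose future null vectors $v_j$ at $y_{m_j}$ with $v_j\to v$, and run the null-geodesic $\gamma_j$ with $\dot\gamma_j(0)=v_j$ until it meets $U_{m_j}$, at a point $a_j=\gamma_j(t_j)$ with $t_j\neq 0$ (the parameter is nonzero since $y_{m_j}\notin V_0\supset U_{m_j}$). Causal convexity of $V$ confines the segment of $\gamma_j$ between $y_{m_j}$ and $a_j$ to $cl(V)$; strong causality of $X$ together with compactness of $cl(V)$ gives a uniform upper bound on the $h$-lengths of these segments, while they are bounded below by a positive constant because $y_{m_j}$ stays out of $V_0$ whereas $a_j\to x$ and $x$ lies in the interior of $V_0$. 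Reparametrizing by $h$-arclength and invoking Arzela--Ascoli together with the standard fact that a uniform limit of causal curves in a strongly causal space-time is causal, I would extract a subsequential uniform limit which, thanks to the lower length bound, is a non-constant causal curve from $\hat y$ to $x$; continuous dependence of geodesics on their initial data then identifies this limit as a reparametrization of the null-geodesic through $\hat y$ with tangent line $\mathbb{R}v$, and in particular that null-geodesic passes through $x$. As $v$ was arbitrary, the claim follows. I expect this to be the main obstacle, since one must simultaneously keep the relevant null-geodesic segments inside a fixed compact set, bound their affine lengths, and pass to a limit that is still a null-geodesic and still reaches $x$.

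Granting the claim, the proof finishes by locating $\hat y$. Suppose first that $\hat y\in V$. Then $\hat y$ and $x$ are distinct points of $V$, and every null-geodesic through $\hat y$ meets $x$; the segment of any such null-geodesic joining $\hat y$ to $x$ is a causal curve with endpoints in $V$, so it lies in $V$ by causal convexity and, by convex normality, is the unique geodesic segment in $V$ joining $\hat y$ and $x$. Hence all null-geodesics through $\hat y$ have the same tangent line at $\hat y$, so there is only one null-geodesic through $\hat y$, contradicting Proposition~\ref{skyembedding}, by which the null-geodesics through $\hat y$ form an $(n-1)$-sphere. Therefore $\hat y\notin V$, and since every null-geodesic through $\hat y$ passes through $x\in U$, the point $y=\hat y$ is the required point. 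This proves the proposition.
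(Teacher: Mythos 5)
Your route is genuinely different from the paper's. The paper disposes of the statement in a few lines: it picks an Alexandrov neighborhood $N=I^+(p)\cap I^-(q)\subset U\cap W$ (where $W$ is the neighborhood witnessing refocusing at $x$), obtains $y\notin W$ with all null-geodesics through $y$ entering $N$, and then quotes the five-case analysis from the proof of Proposition~\ref{maintheorem1}, which shows that no point of $V\setminus N$ can have all of its null-geodesics entering such an $N$ when $V$ is causally convex, convex normal and globally hyperbolic; hence $y\notin V$. You instead extract a limit point $\hat y\in cl(V)\setminus V_0$ of the refocusing points and aim to show that every null-geodesic through $\hat y$ passes through $x$, then exclude $\hat y\in V$ via convex normality together with Proposition~\ref{skyembedding}. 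That endgame is correct, and if your central claim were established your argument would actually prove more than the proposition (it is close in spirit to the limit argument in the paper's final Remark for globally hyperbolic space-times); but it is bought at the price of a limit-curve argument that the paper's proof deliberately avoids.

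The gap sits in the crux, exactly where you flag it. Causal convexity does confine the segments to $cl(V)$, and strong causality on the compact set $cl(V)$ does give a uniform bound on their $h$-lengths; but ``continuous dependence of geodesics on initial data'' does not by itself identify the limit curve with the geodesic $t\mapsto \exp_{\hat y}(tv)$ and place $x$ on it. Two ingredients are missing. First, you need a bound on the affine parameters $t_j$ at which $\gamma_j$ meets $U_{m_j}$: the $h$-length bound does not directly bound $t_j$, since the $h$-norm of the velocity is not preserved along the geodesic; one needs a comparability estimate on $cl(V)$ (e.g.\ a Gronwall-type bound on the $h$-norm of a geodesic velocity field over the compact set, using $v_j\to v\neq 0$). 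Second, you need a reason why the maximal geodesic $\exp_{\hat y}(tv)$ is defined up to $t_\infty=\lim_j t_j$: a priori it could be inextendible with finite affine length while its image remains in $cl(V)$, in which case the Arzel\`a--Ascoli limit curve would not be a reparametrization of this geodesic and continuity of $\exp$ would not apply at $t_\infty$. This scenario is ruled out by the standard fact that strong causality forbids an inextendible causal curve from being (even partially) imprisoned in a compact set, but that is an additional ingredient you must invoke, not a consequence of continuous dependence. Once these two points are supplied you do not even need the limit-curve machinery: continuity of the exponential map gives $\exp_{\hat y}(t_\infty v)=\lim_j\gamma_j(t_j)=x$ directly. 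As written, this identification step is the genuine gap.
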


\begin{proof}
Since $(X,g)$ is refocusing at $x$, there exists an open neighborhood $W$ of $x$ such that for all smaller open neighborhoods $U$, there is an event $y\notin W$ such that all the null-geodesics through $y$ enter $U$.

Let $V$ be a causally convex and convex normal neighborhood of $x$ with compact closure, and let $U$ be any open neighborhood of $x$ contained in $V$.  We wish to show that there exists an event $y\notin V$ such that all the null-geodesics through $y$ enter $U$.

The set $U\cap W$ is an open neighborhood of $x$.  Furthermore, since $X$ is strongly causal the Alexandrov and manifold topologies of $x$ agree, see Theorem~\ref{KP} \cite[Theorem 4.24]{Penrose}.  Therefore we may choose an open neighborhood of $x$ of the form $N=I^+(p)\cap I^-(q)$ which is contained in $U\cap W$.

Since $N\subset W$, there is an event $y\notin W$ such that all of the null-geodesics through $y$ enter $N$.  Since $N=I^+(p)\cap I^-(q)\subset V$, the proof of (i.)$\Rightarrow$(iii.) of the equivalence of Low's three definitions guarantees that $y\notin V$.

Therefore, there exists an event $y\notin V$ such that all of the null-geodesics through $y$ enter $N$ and hence $U$, which completes the proof.
\end{proof}

\begin{definition}
Let $(X^{n+1},g)$ be a space-time, and let $\rho: \widetilde{X}\to X$ be a smooth covering map.  Then $\widetilde{X}$ is a smooth connected $(n+1)$-manifold.  The pullback $\widetilde{g}=\rho^*(g)$ defines a Lorentz metric on $\widetilde{X}$.  Let $\mathcal Y$ be a timelike vector field defining the time-orientation of $X$.  There exists a unique vector field $\widetilde{\mathcal{Y}}$ on $\widetilde{X}$ which projects to $\mathcal{Y}$, i.e.~$d\rho_p(\widetilde{\mathcal{Y}}_p)=Y_{\rho(p)}$ for all $p\in \widetilde{X}$, see \cite[Section 7.5]{BrickellClark}.  This vector field $\widetilde{\mathcal{Y}}$ is timelike and gives a time-orientation of $(\widetilde{X},\widetilde{g})$ making it into a space-time.  We call the covering map $\rho:(\widetilde{X},\widetilde{g})\to (X,g)$ a \emph{Lorentz covering} of $(X,g)$, and we call the space-time $(\widetilde{X},\widetilde{g})$ a \emph{Lorentz cover} of $(X,g)$.
 \end{definition}

We now prove a theorem which generalizes the result \cite[p. 345]{CR},~Theorem~\ref{ChernovFinite} of Chernov and Rudyak, proved for globally hyperbolic space-times.  Specifically, a Lorentz cover of a strongly causal refocusing space-time is also a strongly causal refocusing space-time.  We will need the following Lemma.

\begin{lemma}\label{coveringlemma}
Let $(X^{n+1},g)$ be a strongly causal space-time, $x,y\in X$ distinct points, and $U$ a causally convex neighborhood of $x$ such that all the null-geodesics passing through $y$ enter $U$.  Then there exists a smooth embedding $\Psi$ of $\mathbb{S}^{n-1}$ into the tangent future null-cone of $y$, consisting of exactly one vector $\Psi(v)$ in each future null direction $v$ at $y$, whose image under $exp_y$ is contained in $U$.
\end{lemma}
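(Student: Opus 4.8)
The plan is to realize the sphere of future null directions at $y$ concretely as a cross-section of the future null cone in $T_yX$, and then to choose---smoothly in the direction---an affine parameter along each null geodesic from $y$ at which that geodesic already lies inside $U$. First I would fix the future-directed timelike vector $\tau\in T_yX$ coming from the time-orienting field of $X$ and set
\[
\Sigma=\{\,w\in T_yX : g(w,w)=0,\ w\ \text{future-directed},\ g(w,\tau)=-1\,\}.
\]
Since the linear functional $g(\cdot,\tau)$ has non-vanishing restriction to each tangent space $w^{\perp}$ of the future null cone (because $g(w,\tau)<0$ for future null $w$), the set $\Sigma$ is a smooth compact submanifold of the future null cone $FN_yX$, diffeomorphic to $\mathbb{S}^{n-1}$ (in linear coordinates on $T_yX$ adapted to $\tau$ it is literally the standard unit sphere), and $w\mapsto[w]$ identifies it with the space $SN_yX$ of future null directions at $y$, which is an $(n-1)$-sphere by Proposition~\ref{skyembedding}. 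It will therefore be enough to build a smooth embedding $\Psi\colon\Sigma\to T_yX$ with $\Psi(v)$ a positive multiple of $v$ and $\exp_y(\Psi(v))\in U$ for every $v\in\Sigma$, and then precompose with a diffeomorphism $\mathbb{S}^{n-1}\xrightarrow{\ \cong\ }\Sigma$.

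Next, for $v\in\Sigma$ I would write $\gamma_v(t)=\exp_y(tv)$ for the future-directed null geodesic from $y$, defined on its maximal interval $[0,T_v)$, and consider $A_v=\{\,t\in(0,T_v):\exp_y(tv)\in U\,\}$. (Here the hypothesis that every null geodesic through $y$ enters $U$ should be read as saying that the future light cone of $y$ meets $U$, i.e.\ each $\gamma_v$ meets $U$ at some positive parameter; the past case is symmetric under time reversal.) Three facts about $A_v$ drive the argument: it is non-empty by hypothesis; it is open, since $\exp_y$ is continuous on its open domain and $U$ is open; and it is an \emph{interval}, because whenever $0<t_1<t_2$ lie in $A_v$ the null geodesic segment $\gamma_v|_{[t_1,t_2]}$ is a causal curve whose endpoints lie in $U$, so it is contained in $U$ by causal convexity of $U$, giving $[t_1,t_2]\subset A_v$.

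The heart of the proof is then a partition-of-unity patching. For each $v_0\in\Sigma$ choose $t_{v_0}\in A_{v_0}$; by continuity of $(v,t)\mapsto\exp_y(tv)$ and openness of $U$ there is an open neighborhood $W_{v_0}\ni v_0$ in $\Sigma$ with $t_{v_0}\in A_v$ for all $v\in W_{v_0}$. Using compactness of $\Sigma$ I would pass to a finite subcover $W_1,\dots,W_k$ with associated values $t_1,\dots,t_k$, take a smooth partition of unity $\{\phi_i\}_{i=1}^{k}$ subordinate to it, and put $f(v)=\sum_{i=1}^{k}\phi_i(v)\,t_i$. Then $f\colon\Sigma\to(0,\infty)$ is smooth, and for each fixed $v$ the value $f(v)$ is a convex combination of those $t_i$ with $\phi_i(v)\neq0$, every one of which lies in the interval $A_v$; hence $f(v)\in A_v$, so $f(v)<T_v$ and $\exp_y\!\big(f(v)v\big)\in U$. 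Setting $\Psi(v)=f(v)\,v$ then gives a map into $FN_yX$ that meets each future null direction in exactly one vector and has $\exp_y(\Psi(v))\in U$; it is injective because $g(\Psi(v),\tau)=-f(v)$ recovers $f(v)$ and hence $v$, and it is an immersion because a nonzero vector in $\ker d\Psi_v$ would have to be a nonzero multiple of $v$ lying in $T_v\Sigma\subseteq\tau^{\perp}$, impossible since $g(v,\tau)=-1\neq0$; an injective immersion from the compact manifold $\Sigma$ is an embedding.

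I expect the global smoothing to be the only real obstacle: pointwise it is trivial to name a parameter landing in $U$, but making the choice depend smoothly on the direction needs each admissible set $A_v$ to be convex, and that is exactly where causal convexity of $U$ is used---a partition-of-unity average of admissible parameters is admissible only because $A_v$ is an interval. Identifying $\Sigma$ with $\mathbb{S}^{n-1}$ and checking that $\Psi$ is an embedding are routine by comparison.
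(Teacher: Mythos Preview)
Your proof is correct and follows essentially the same route as the paper's: both identify the future null cone at $y$ with $\mathbb{S}^{n-1}\times(0,\infty)$, use causal convexity of $U$ to see that the admissible parameter set along each direction is an open interval, and then average finitely many local choices with a partition of unity on the compact sphere to produce a smooth global section landing in $U$. Your cross-section $\Sigma$ via $g(\cdot,\tau)=-1$ and your immersion argument are slightly more explicit than the paper's product-coordinate treatment, but the ideas, including the reduction to the future case by causal convexity, are the same.
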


\begin{proof}
We assume that $y\notin U$ since the case $y\in U$ is obvious.  Since $U$ is causally convex, we may assume without loss of generality that all null-geodesics through $y$ enter $U$ in the future of $y$.  (If $\gamma_1$ and $\gamma_2$ are null-geodesics passing through $y$ which enter $U$ in the future and in the past of $y$ respectively, then together $\gamma_1\cup \gamma_2$ form a nonspacelike curve intersecting $U$ along more than one connected interval.)

Let $V$ be the open subset of the tangent null-cone of $y$ on which $exp_y$ is defined.  Put $W=(exp_y|_V)^{-1}(U)$.  Then $W$ is an open subset of $V$ (and hence an open subset of the future tangent null-cone) by the continuity of $exp_y|_V$.  The future tangent null-cone of $y$ is diffeomorphic with $\mathbb{S}^{n-1}\times (0,\infty)$ via a fiber preserving diffeomorphism, so we may identify $W$ with an open subset of $\mathbb{S}^{n-1}\times (0,\infty)$.  By the causal convexity of $U$, if $(v,a),(v,b)\in W$ and $a<b$ then $(v,c)\in W$ for all $c\in [a,b]$.

We define a function $f: \mathbb{S}^{n-1}\to (0,\infty)$ (not necessarily continuous) by assigning to each future null direction $v$ at $y$ a positive real number $t\in (0,\infty)$ such that $(v,t)\in W$.  Note that the collection of open subsets of $W$ of the form $B\times I$, where $B$ is open and $I$ is an open interval, form a base for $W$.  We will call these base elements \emph{cylinders}.  Associate to each point $(v,f(v))$ a cylinder in $W$ containing it.  The coordinate projection onto $\mathbb{S}^{n-1}$ carries each cylinder to an open set in $\mathbb{S}^{n-1}$.  Furthermore, these open sets cover $\mathbb{S}^{n-1}$.  Therefore by the compactness of $\mathbb{S}^{n-1}$ there exists a finite subcover of $\mathbb{S}^{n-1}$ by open sets $U_1,...,U_k$.  There exists a smooth partition of unity on $\mathbb{S}^{n-1}$ which is subordinate to this cover.  Therefore we have nonnegative smooth functions $f_1,...,f_k:\mathbb{S}^{n-1}\to \mathbb{R}$ such that $supp(f_i)\subset U_i, i=1,...,k$ and $\Sigma_{i=1}^k f_i(v)=1$ for all $v\in \mathbb{S}^{n-1}$.

Define a function $\phi: \mathbb{S}^{n-1}\to (0,\infty): v\mapsto \Sigma_{i=1}^k f_i(v)f(v_i)$, where $v_i\in U_i\subset \mathbb{S}^{n-1}$ and $f: \mathbb{S}^{n-1}\to \mathbb{R}$ is defined as above.  Now define $\Psi: \mathbb{S}^{n-1}\to W: v\mapsto (v,\phi(v))$.  We complete the proof of the lemma by showing that $\Psi$ is a smooth embedding.

First, observe that $\Psi(\mathbb{S}^{n-1})\subset W$.  To see this, let $v\in \mathbb{S}^{n-1}$.  The union of the cylinders over $U_1,...,U_k$ which contain $v$ is contained in $W$.  Thus $(v,f(v_i))\in W$ for each $i$ such that $v\in U_i$.  Let $m$ and $M$ be the minimum and maximum values, respectively, of $f(x_i)$ taken over all $i$ such that $v\in U_i$.  Therefore $(v,c)\in W$ for $m\leq c\leq M$.
Now, $m=\Sigma_{i=1}^k f_i(v)m\leq \Sigma_{i=1}^k f_i(v)f(v_i)\leq \Sigma_{i=1}^k f_i(v)M=M$ so that $m\leq \phi(v)\leq M$ and hence so $\Psi(v)\in W$.

The map $\Psi: \mathbb{S}^{n-1}\to W: v\mapsto (v,\phi(v))$ is smooth in the first coordinate because the first coordinate projection is the identity map.  It is also smooth in the second coordinate, i.e.~the map $\phi$ is smooth.  To see this, let $v\in \mathbb{S}^{n-1}$ and let $U_v$ be the intersection of  those sets $U_i$ such that $v\in U_i$.  Thus $U_v$ is a nonempty open neighborhood of $v$ in $\mathbb{S}^{n-1}$.  The restriction $\phi|_{U_v}$ is a smooth map because it is a linear combination of the smooth functions $f_1,...,f_k$ with coefficients defined by $c_i=f(v_i)$ if $v\in U_i$ and $c_i=0$ if $v\notin U_i$. Thus $\phi$, and hence $\Psi$, is a smooth map.  herefore $\Psi$ is a smooth bijection of $\mathbb{S}^{n-1}$ onto its image in $W$.  The inverse map $\Psi^{-1}$ is also smooth because it is the first coordinate projection.  This completes the proof of the lemma.
\end{proof}

\begin{theorem}\label{coveringtheorem}
Let $(X^{n+1},g)$, $n\geq 2$ be a strongly causal space-time which is refocusing at $x\in X$, and let $\rho: (\widetilde{X},\widetilde{g})\to (X,g)$ a Lorentz covering map.  Then $(\widetilde{X}^{n+1},\widetilde{g})$ is a strongly causal space-time which is refocusing at $\widetilde{x}\in \widetilde{X}$ for all $\widetilde{x}\in \rho^{-1}(x)$. Moreover if $(X^{n+1},g)$ is strongly causal and strongly refocusing at $x$, then 
 $(\widetilde{X}^{n+1},\widetilde{g})$ is strongly causal and strongly refocusing at $\widetilde{x}\in \widetilde{X}$ for all $\widetilde{x}\in \rho^{-1}(x)$.
\end{theorem}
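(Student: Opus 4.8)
The plan is to prove the two assertions in turn, each by lifting a single ``witness'' point through the covering: for the refocusing statement I would use Lemma~\ref{coveringlemma}, and for the strong refocusing statement the symmetry of the relation $\sim$ from Proposition~\ref{mainprop3}. Throughout I will use three standard properties of the Lorentz covering $\rho$: it is a local isometry, so geodesics lift to geodesics and $\rho\circ\exp_{\widetilde p}=\exp_{\rho(\widetilde p)}\circ\,d\rho_{\widetilde p}$ on the domain of $\exp_{\widetilde p}$; the differential $d\rho_{\widetilde p}$ is a linear isomorphism carrying the future null cone at $\widetilde p$ onto that at $\rho(\widetilde p)$, since $\rho$ preserves the time-orientation; and paths lift uniquely once a lift of the initial point is fixed. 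First I would observe that $(\widetilde X,\widetilde g)$ is strongly causal: given $\widetilde p$ and an evenly covered, causally convex neighborhood $W$ of $\rho(\widetilde p)$ (arbitrarily small such $W$ exist by Proposition~\ref{mainprop1}), the sheet $\widetilde W\ni\widetilde p$ over $W$ is causally convex, because a causal curve in $\widetilde X$ with endpoints in $\widetilde W$ projects to a causal curve in $X$ with endpoints in $W$, which stays in $W$, so the lifted curve stays in $\rho^{-1}(W)$ and, being connected and starting in $\widetilde W$, stays in $\widetilde W$.

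For the refocusing assertion, fix $\widetilde x\in\rho^{-1}(x)$, $x=\rho(\widetilde x)$; I will verify condition (ii) of Proposition~\ref{maintheorem1} at $\widetilde x$. Using Proposition~\ref{mainprop1}, choose a causally convex, convex normal neighborhood $V$ of $x$ with compact closure lying inside an evenly covered open set, and let $\widetilde V$ be the sheet over $V$ containing $\widetilde x$, so $\rho|_{\widetilde V}\colon\widetilde V\to V$ is a diffeomorphism and $\widetilde V$ is causally convex as above. Given open $\widetilde U$ with $\widetilde x\in\widetilde U\subseteq\widetilde V$, put $U^{\ast}=\rho(\widetilde U)$ and, since the Alexandrov and manifold topologies agree (Theorem~\ref{KP}), pick a causally convex Alexandrov neighborhood $U'=I^{+}(p)\cap I^{-}(q)$ of $x$ with $U'\subseteq U^{\ast}$; let $\widetilde U'\subseteq\widetilde U$ be the corresponding sheet over $U'$, again causally convex. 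By Proposition~\ref{closedprop} there is a point $y\notin V$ all of whose null-geodesics enter $U'$, and Lemma~\ref{coveringlemma} then supplies a smooth embedding $\Psi$ of $\mathbb S^{n-1}$ into the tangent future null cone of $y$, meeting each future null direction at $y$ exactly once, with $\exp_{y}(\Psi(\mathbb S^{n-1}))\subseteq U'$. Fix one direction $v_{1}$, set $a=\exp_{y}(\Psi(v_{1}))\in U'$ and $\widetilde a=(\rho|_{\widetilde V})^{-1}(a)\in\widetilde U'$, lift the null-geodesic from $a$ to $y$ starting at $\widetilde a$, and let $\widetilde y\in\rho^{-1}(y)$ be its endpoint; reversing this lift gives a null-geodesic from $\widetilde y$ to $\widetilde a$ with initial velocity $(d\rho_{\widetilde y})^{-1}\Psi(v_{1})$. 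For each future null direction $v$ at $y$ define $\widetilde\gamma_{v}(s)=\exp_{\widetilde y}\bigl(s\,(d\rho_{\widetilde y})^{-1}\Psi(v)\bigr)$; this is defined on $[0,1]$ because its projection $\exp_{y}(s\Psi(v))$ is, and $\rho(\widetilde\gamma_{v}(1))=\exp_{y}(\Psi(v))\in U'$, so $\widetilde\gamma_{v}(1)\in\rho^{-1}(U')$.

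The crux is that $v\mapsto\widetilde\gamma_{v}(1)$ is continuous, so its image is a connected subset of the disjoint union of sheets $\rho^{-1}(U')$ and therefore lies in one of them; since $\widetilde\gamma_{v_{1}}(1)=\widetilde a\in\widetilde U'$, that sheet is $\widetilde U'$. Hence $\widetilde\gamma_{v}(1)\in\widetilde U'$ for every future null direction $v$ at $y$, and transporting directions along the isomorphism $d\rho_{\widetilde y}$ shows that every null-geodesic through $\widetilde y$ enters $\widetilde U'\subseteq\widetilde U$; also $\widetilde y\notin\widetilde V$ because $\rho(\widetilde y)=y\notin V=\rho(\widetilde V)$. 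This gives (ii), hence refocusing at $\widetilde x$. I expect this sheet-bookkeeping to be the main obstacle: different lifts of $y$ send their null-geodesics into possibly different sheets over $U'$, and what forces them all into the correct one is the anchoring of a single geodesic at the explicit point $\widetilde a\in\widetilde U'$ together with connectedness of $\mathbb S^{n-1}$ — which is where the hypothesis $n\ge2$ is used.

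For the strong refocusing assertion, strong refocusing at $x$ with witness $y\ne x$ means $O_{y}\subseteq O_{x}$, hence $O_{y}=O_{x}$ and $x\sim y$ by the symmetry argument in Proposition~\ref{mainprop3}; so every null-geodesic through $x$ passes through $y$, and since a null-geodesic meeting $x$ and $y$ in opposite time-directions would yield a closed causal curve, they all meet $y$ in the same direction, say to the future. After normalizing future null directions $v$ at $x$ by an auxiliary Riemannian metric, this gives a unique $\sigma(v)>0$ with $\exp_{x}(\sigma(v)v)=y$. Fix $\widetilde x\in\rho^{-1}(x)$ and set $\widetilde y(v)=\exp_{\widetilde x}\bigl(\sigma(v)(d\rho_{\widetilde x})^{-1}v\bigr)\in\rho^{-1}(y)$. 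The claim is that $v\mapsto\widetilde y(v)$ is locally constant: fixing a causally convex, evenly covered neighborhood $B$ of $y$ with $\widetilde y(v_{0})$ in a sheet $B_{\beta_{0}}$, joint continuity of $(v,s)\mapsto\exp_{\widetilde x}(s(d\rho_{\widetilde x})^{-1}v)$ puts $\exp_{\widetilde x}\bigl(\sigma(v_{0})(d\rho_{\widetilde x})^{-1}v\bigr)$ in $B_{\beta_{0}}$ for $v$ near $v_{0}$; then causal convexity of $B$ keeps the downstairs null-geodesic from that point to $y$ inside $B$, so its lift stays in the connected sheet $B_{\beta_{0}}$, forcing $\widetilde y(v)\in B_{\beta_{0}}\cap\rho^{-1}(y)=\{\widetilde y(v_{0})\}$. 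Since the sphere of future null directions at $\widetilde x$ is connected ($n\ge2$), $\widetilde y(v)$ is a constant $\widetilde y$, so every null-geodesic through $\widetilde x$ passes through $\widetilde y$; Proposition~\ref{mainprop3} then gives $\widetilde x\sim\widetilde y$, hence every null-geodesic through $\widetilde y$ passes through $\widetilde x$, and $\widetilde y\ne\widetilde x$ since $\rho(\widetilde y)=y\ne x$. The obstacle here is again tracking which sheet over a neighborhood of $y$ the lifted geodesics enter, now resolved by causal convexity and connectedness in place of Lemma~\ref{coveringlemma}.
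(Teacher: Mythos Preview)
Your proof is correct and, for the refocusing assertion, follows essentially the same route as the paper: both arguments pick a witness $y\notin V$ downstairs, invoke Lemma~\ref{coveringlemma} to get the embedded sphere $\Psi(\mathbb S^{n-1})$ in the future null cone at $y$, anchor one lifted geodesic in the sheet $\widetilde V$ containing $\widetilde x$, and then use connectedness of $\mathbb S^{n-1}$ (this is where $n\ge2$ enters) to force $\exp_{\widetilde y}(\Psi(\mathbb S^{n-1}))$ into that single sheet. Your use of Proposition~\ref{closedprop} and an Alexandrov set $U'$ is a slightly tidier way to arrange the neighborhoods than the paper's, but the mechanism is identical. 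Your strong-causality argument (sheets over causally convex evenly covered sets are causally convex) is the contrapositive of the paper's contradiction argument.

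For the strong refocusing assertion the paper says only that the proof is ``similar and in fact easier''; the intended shortcut is presumably that when all null-geodesics through $y$ actually hit the single point $x$, the image $\exp_{\widetilde y}(\Psi(\mathbb S^{n-1}))$ lands in the discrete fiber $\rho^{-1}(x)$ and connectedness forces it to be the single point $\widetilde x$, with no neighborhood bookkeeping needed. Your route is a genuine variant: you first invoke the symmetry of $\sim$ from Proposition~\ref{mainprop3} to reverse the roles of $x$ and $y$, then lift from $\widetilde x$ rather than from a lift of $y$, and establish that $v\mapsto\widetilde y(v)$ is locally constant via causal convexity of an evenly covered neighborhood of $y$. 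This is a little more elaborate than necessary but perfectly sound; it has the minor advantage that the base point $\widetilde x$ of the lift is fixed from the outset rather than produced by lifting a geodesic, and it avoids needing a smooth section of the null cone at $y$ (only continuity of $\exp$ and uniqueness of the hitting time $\sigma(v)$, which strong causality guarantees, are used).
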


\begin{proof}
We show first that $(\widetilde{X},\widetilde{g})$ is strongly causal.  By way of contradiction, assume that $(\widetilde{X},\widetilde{g})$ fails to be strongly causal at some point $\widetilde{q}\in \widetilde{X}$.  Thus, there exists an open neighborhood $V$ of $\widetilde{q}$ such that for all open neighborhoods $U$ of $\widetilde{q}$ with $U\subset V$, there exists a nonspacelike curve $\gamma$ which leaves $U$ and then returns to $U$.  Note that we may replace $V$ with any open subset of $V$ containing $\widetilde{q}$.  Thus we may assume that $V$ is contained in a single component of the preimage of a neighborhood of $q=\rho(\widetilde{q})\in X$ which is evenly covered by $\rho$.  Take $U\subset V$ such that $\rho(U)$ is a causally convex neighborhood of $q$.  Now, $\rho$ is a covering map and hence open, so $\rho(U)$ and $\rho(V)$ are open neighborhoods of $q\in X$ such that $\rho(U)\subset \rho(V)$.  Furthermore, $\rho \circ \gamma$ is a nonspacelike curve in $X$ which leaves $\rho(U)$ and then returns.  Thus $(X,g)$ fails to be strongly causal at $q\in X$, a contradiction.

Take any $\widetilde{x}\in \rho^{-1}(x)$.  Take $V$ to be an open neighborhood of $x\in X$ as in the definition of $(X,g)$ being refocusing at $x$.  We may assume that $V$ is evenly covered by $\rho$.  Let $U$ be any causally convex open neighborhood of $x$ which is contained in $V$.  Take $y\in X\setminus V$ such that all of the null-geodesics in $(X,g)$ which pass through $y$ enter $U$.

Given any null-geodesic $\gamma$ in $X$ starting at $y$, i.e. $\gamma(0)=y$, choose the null vector $\Psi(\mathbf{v})\in W\subset T_yX$ in the direction of $\gamma'(0)$ defined by the map $\Psi$ of the preceding Lemma~\ref{coveringlemma}.  Thus $z=exp_y(\Psi(\mathbf{v}))\in U$.  Let $\widetilde{V}$ be the connected component of $\rho^{-1}(V)$ containing $\widetilde{x}\in \rho^{-1}(x)$.  Put $\widetilde{z}$ to be the unique point of $\rho^{-1}(z)$ located within $\widetilde{V}$.  Let $\widetilde{\gamma}$ be the unique lift of $\gamma$ such that $\widetilde{\gamma}(\bar{t})=\widetilde{z}$, where $\bar{t}$ is given by $\gamma(\bar{t})=z$.  Put $\widetilde{y}=\widetilde{\gamma}(0)$ so that $\rho(\widetilde{y})=y$.

Now let $\widetilde{U}\subset \widetilde{V}$ be any open neighborhood of $\widetilde{x}$ which is contained in $\widetilde{V}$.  Without loss of generality, we may choose $U\subset X$ as above so that $\rho^{-1}(U)\cap \widetilde{V}\subset \widetilde{U}$.

We show that all null-geodesics in $\widetilde{X}$ passing through $\widetilde{y}$ enter $\widetilde{U}$.  We may identify the tangent spaces $T_yX$ and $T_{\widetilde{y}}\widetilde{X}$ because $\rho$ is a local diffeomorphism and hence $d\rho_{\widetilde{y}}: T_{\widetilde{y}}\widetilde{X}\to T_{y}X$ is a linear isomorphism.  Since $\Psi(\mathbb{S}^{n-1})$ is connected and $exp_{\widetilde{y}}$ is continuous, $exp_{\widetilde{y}}(\Psi(\mathbb{S}^{n-1}))$ is a connected subset of $\rho^{-1}(V)$ and therefore lies in a single connected component of $\rho^{-1}(V)$.  Furthermore, $exp_{\widetilde{y}}(\Psi(\mathbb{S}^{n-1}))$ contains $\widetilde{z}$ due to the null-geodesic $\widetilde \gamma$ we constructed above.  Thus $exp_{\widetilde{y}}(\Psi(\mathbb{S}^{n-1}))$ is a subset of $\widetilde{V}$ and hence a subset of $\widetilde{U}$, since $exp_{\widetilde{y}}(\Psi(\mathbb{S}^{n-1}))\subset \rho^{-1}(U)$.  This completes the proof of the theorem for the refocusing case. The proof for the strongly refocusing case is similar and in fact easier.
\end{proof}

We conclude this section by mentioning two important theorems.  These theorems show that refocusing imposes strong topological restrictions on the Cauchy surface of a globally hyperbolic space-time.

\begin{theorem}\label{LowCompact}
Low, \cite[Theorem 5]{LowRefocusing} A Cauchy surface of a globally hyperbolic refocusing space-time is compact.
\end{theorem}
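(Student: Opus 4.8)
The plan is to exploit the product structure of a globally hyperbolic space-time and the identification of its space of null-geodesics with the unit cotangent bundle of a Cauchy surface, and then to show that refocusing forces the skies of the ``refocusing points'' $y_n$ into a fixed \emph{compact} subset of $\mathcal N$, which, combined with a limiting argument, is incompatible with a non-compact Cauchy surface. Concretely, suppose $(X^{n+1},g)$ is globally hyperbolic and refocusing at an event $x$. By the splitting theorem for globally hyperbolic space-times, $X$ is diffeomorphic to $S\times\mathbb R$ with each $S_t=S\times\{t\}$ a smooth spacelike Cauchy surface; assume $x\in S_0=:S$. Since every inextendible null-geodesic meets each $S_t$ exactly once and transversally, sending a null-geodesic to the pair (its intersection point with $S$, its direction there) identifies $\mathcal N$ with the spherical cotangent bundle $ST^*S$, an $\mathbb S^{n-1}$-bundle over $S$; in particular $\mathcal N$ is compact if and only if $S$ is compact (cf.~\cite{LowRefocusing}). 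Using Propositions~\ref{mainprop1} and~\ref{closedprop}, fix a causally convex, convex normal neighbourhood $V$ of $x$ with compact closure, a decreasing neighbourhood base $\{U_n\}_{n\ge 1}$ at $x$ with $U_n=I^+(p_n)\cap I^-(q_n)\subseteq V$ and $p_n,q_n\to x$ (as in the proof of Proposition~\ref{maintheorem1}), and points $y_n\in X\setminus V$ such that every null-geodesic through $y_n$ enters $U_n$.

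The first substantive step is a localisation of intersection points. Because $x$ lies on $S$ and null-geodesics meet $S$ exactly once and transversally, the map sending (a point $a$, a null direction at $a$) to the intersection with $S$ of the associated maximal null-geodesic is continuous and takes the value $x$ at every $(x,\,\cdot\,)$; since the sphere of null directions at a point is compact, there are compact neighbourhoods $\overline{C_n}$ of $x$ in $S$ with $\bigcap_n\overline{C_n}=\{x\}$ such that \emph{every} null-geodesic meeting $U_n$ meets $S$ inside $\overline{C_n}$. Hence the sky $O_{y_n}$ is contained in $\mathcal K_n:=\{[\gamma]\in\mathcal N:\gamma\cap S\in\overline{C_n}\}$, which under the identification above is the restriction $ST^*S|_{\overline{C_n}}$ and is therefore compact, and $\bigcap_n\mathcal K_n=O_x$. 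In particular all of the topologically embedded spheres $O_{y_n}$ (Proposition~\ref{skyembedding}) lie in the single compact set $\mathcal K_1\subseteq\mathcal N$, and moreover $O_{y_n}\subseteq\mathcal K_n$ forces $O_{y_n}$ eventually inside any prescribed neighbourhood of $O_x$ in $\mathcal N$.

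Now suppose, for contradiction, that $S$ is non-compact, and distinguish two cases for the sequence $\{y_n\}$. If some subsequence stays in a compact subset of $X$, it converges to some $\widehat y\in X\setminus V$ (so $\widehat y\neq x$ since $x\in V$); by continuity of the sky correspondence $z\mapsto O_z$ for the Hausdorff metric we get $O_{y_n}\to O_{\widehat y}$, while $O_{y_n}\subseteq\mathcal K_n$ forces $O_{\widehat y}\subseteq\bigcap_n\mathcal K_n=O_x$, hence $O_{\widehat y}=O_x$ (one embedded $\mathbb S^{n-1}$ cannot be a proper subset of another, by the Borsuk--Ulam argument of Proposition~\ref{mainprop3}). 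Thus $\widehat y\sim x$, i.e.\ $(X,g)$ is strongly refocusing at $x$, and then $S$ is compact by the Bott--Samelson-type rigidity for strongly refocusing globally hyperbolic space-times that underlies the $Y^x_l$-manifold examples above (cf.~Theorem~\ref{ChernovFinite} and \cite{CR},~\cite{Besse}) --- a contradiction. If instead $\{y_n\}$ leaves every compact subset of $X$, write $y_n=(s_n,t_n)$; one shows, using a complete auxiliary Riemannian metric on $S$ and bounds on the lapse of the product metric on the time-slabs actually visited by the relevant geodesics, that $J^-(y_n)\cap S$ (or $J^+(y_n)\cap S$, according to the sign of $t_n$) contains a metric ball about $s_n$ of radius tending to infinity. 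Being a proper compact subset of the non-compact Cauchy surface $S$, this set has non-empty topological boundary, and the boundary points --- which lie on the past (resp.\ future) null cone of $y_n$ and are therefore intersection points with $S$ of null-geodesics through $y_n$ --- are at distance tending to infinity from $x$, contradicting the previous paragraph's confinement of all such intersection points to $\overline{C_1}$. In either case $S$ must be compact.

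The delicate point is the second case: converting ``$y_n$ escapes to infinity in $S\times\mathbb R$'' into ``the sky $O_{y_n}$ escapes to infinity in $ST^*S$'' is essentially a properness statement for the sky correspondence, and an honest proof must rule out that \emph{all} null-geodesics through a far-away point nevertheless wrap back to meet $S$ near $x$ --- which is precisely the behaviour exhibited by a $Y^x_l$-type, hence compact, Cauchy surface, so non-compactness of $S$ has to be used essentially here. Making the causal estimates uniform on a non-compact $S$ (controlling the lapse $\beta$ of a Bernal--Sánchez splitting $g=-\beta\,dt^2+g_t$ on the relevant time-slabs, to get matching upper and lower bounds on the ``spatial reach'' of null-geodesics) is where the care is needed; I would either push these estimates through directly, or alternatively prove that the $y_n$ cannot escape at all, so that the argument reduces to the strongly refocusing case and ultimately rests on Theorem~\ref{ChernovFinite} together with the localisation step.
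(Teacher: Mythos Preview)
The paper does not prove Theorem~\ref{LowCompact} at all; it is quoted as a result of Low \cite[Theorem~5]{LowRefocusing} and used as a black box. So there is no ``paper's own proof'' to compare against, and your proposal must stand or fall on its own.

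As written, the argument has genuine gaps in both cases of your dichotomy. In Case~1 you deduce strong refocusing at $x$ (this part is fine) and then assert that $S$ is compact ``by the Bott--Samelson-type rigidity for strongly refocusing globally hyperbolic space-times \ldots\ (cf.~Theorem~\ref{ChernovFinite})''. But Theorem~\ref{ChernovFinite} gives finite $\pi_1$, not compactness, and the passage from Lorentzian strong refocusing to a Riemannian $Y^x_l$-structure on $S$ is nowhere established; worse, ``strongly refocusing $\Rightarrow$ compact Cauchy surface'' is a special case of the very theorem you are proving, so invoking it without an independent argument is circular. (A clean non-circular fix in dimension $\dim S\ge 2$: with $\widehat y$ strictly to the future of $S$, the set $J^-(\widehat y)\cap S$ is compact with non-empty interior, and every point of its topological boundary in $S$ lies on a null-geodesic from $\widehat y$, hence equals $x$; thus $S\setminus\{x\}$ is disconnected, contradicting $\dim S\ge 2$.) In Case~2 you yourself flag the problem: the assertion that $J^\mp(y_n)\cap S$ contains a metric ball about $s_n$ of radius $\to\infty$ requires uniform two-sided control of the lapse on unbounded time-slabs of a non-compact $S$, and no such bound is available in general; moreover the sequence could satisfy $s_n$ bounded with $|t_n|\to\infty$, in which case ``$s_n$ far from $x$'' is simply false and your boundary-point argument does not start. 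Until you either (a) supply a genuine properness statement for the sky map $y\mapsto O_y$ that does not rely on unavailable metric bounds, or (b) rule out escape of $\{y_n\}$ by an argument that does not presuppose compactness of $S$, Case~2 remains a gap, and the proposal is a plausible outline rather than a proof.
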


\begin{theorem}\label{ChernovFinite}
Chernov-Rudyak, \cite[Theorem 14]{CR}.  A Lorentz cover of a globally hyperbolic refocusing space-time is also globally hyperbolic and refocusing, with respect to the Lorentz metric induced by the covering map.

Furthermore, a Cauchy surface of a globally hyperbolic refocusing space-time has finite fundamental group.
\end{theorem}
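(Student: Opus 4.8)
The plan is to deduce the theorem from Theorem~\ref{coveringtheorem} and Low's Theorem~\ref{LowCompact}, once we establish the following purely causal fact: if $(X,g)$ is globally hyperbolic and $\rho\colon(\widetilde X,\widetilde g)\to(X,g)$ is a Lorentz cover, then $(\widetilde X,\widetilde g)$ is globally hyperbolic and $\rho^{-1}(S)$ is a Cauchy surface of $\widetilde X$ whenever $S$ is a Cauchy surface of $X$. Granting this, the first assertion is immediate: $\widetilde X$ is globally hyperbolic by the fact, and since $(X,g)$ is in particular strongly causal and refocusing at some event $x$, Theorem~\ref{coveringtheorem} shows $(\widetilde X,\widetilde g)$ is strongly causal and refocusing at each point of $\rho^{-1}(x)$, hence refocusing.

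To prove the causal fact I would first recall that $\widetilde X$ is strongly causal, hence causal, by Theorem~\ref{coveringtheorem} (global hyperbolicity implies strong causality). Choose a smooth Cauchy time function $\tau$ on $X$ with $S=\tau^{-1}(0)$ (this exists by the Geroch splitting theorem) and set $\widetilde\tau=\tau\circ\rho$. Since $\rho$ is a local isometry preserving time orientation, it carries future-directed causal curves to future-directed causal curves, so $\widetilde\tau$ is strictly increasing along every future-directed causal curve of $\widetilde X$; in particular each such curve meets $\widetilde S:=\rho^{-1}(S)=\widetilde\tau^{-1}(0)$ at most once. For the opposite direction, let $\widetilde\gamma$ be an inextendible causal curve in $\widetilde X$ and put $\gamma=\rho\circ\widetilde\gamma$. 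If $\gamma$ had a future endpoint $p\in X$, pick an evenly covered neighborhood $U$ of $p$; a terminal segment of $\widetilde\gamma$ is connected and maps into $\rho^{-1}(U)$, hence lies in a single sheet $U_\alpha$, and composing with the homeomorphism $(\rho|_{U_\alpha})^{-1}$ shows $\widetilde\gamma$ has a future endpoint in $U_\alpha$, contrary to inextendibility; the past case is symmetric. So $\gamma$ is an inextendible causal curve in $X$, hence meets $S$, and the corresponding parameter gives a point of $\widetilde\gamma$ on $\widetilde S$. Thus every inextendible causal curve of $\widetilde X$ meets $\widetilde S$ exactly once, so $\widetilde S$ is a Cauchy surface and $\widetilde X$ is globally hyperbolic.

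For the statement about the fundamental group, apply Low's Theorem~\ref{LowCompact} to $X$ to see that a Cauchy surface $S$ of $X$ is compact, and note that the Geroch splitting $X\cong S\times\mathbb{R}$ gives $\pi_1(S)\cong\pi_1(X)$. Let $\widehat\rho\colon\widehat X\to X$ be the universal cover, regarded as a Lorentz cover; by the causal fact $\widehat X$ is globally hyperbolic, and by the first assertion it is refocusing, so Theorem~\ref{LowCompact} applied to $\widehat X$ shows that its Cauchy surface $\widehat\rho^{-1}(S)$ is compact. Fixing $x_0\in S$, the fiber $\widehat\rho^{-1}(x_0)$ is a discrete closed subset of $\widehat X$ lying in the compact set $\widehat\rho^{-1}(S)$, hence finite; since its cardinality is $|\pi_1(X,x_0)|$ we conclude that $\pi_1(S)\cong\pi_1(X)$ is finite.

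The main obstacle is the causal fact, specifically the claim that an inextendible causal curve upstairs projects to an inextendible causal curve downstairs and the verification that $\widetilde\tau$ really serves as a Cauchy time function; this is precisely where the strong causality of $\widetilde X$ furnished by Theorem~\ref{coveringtheorem} is used. Everything else is a direct appeal to Theorem~\ref{coveringtheorem}, Theorem~\ref{LowCompact}, the Geroch splitting, and elementary point-set properties of covering maps and compactness.
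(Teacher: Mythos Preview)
The paper does not give its own proof of this statement: Theorem~\ref{ChernovFinite} is quoted from Chernov--Rudyak \cite[Theorem 14]{CR} without argument, so there is nothing to compare your attempt against in the paper itself.

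Your argument is correct and is the natural way to recover the result from the machinery the paper develops. The refocusing of the cover is exactly the content of Theorem~\ref{coveringtheorem} (which the paper explicitly presents as a \emph{generalization} of Theorem~\ref{ChernovFinite}, and whose proof does not invoke Theorem~\ref{ChernovFinite}, so there is no circularity). Your ``causal fact'' --- that a Lorentz cover of a globally hyperbolic space-time is globally hyperbolic, with the preimage of a Cauchy surface serving as a Cauchy surface --- is standard, and your proof via the pulled-back time function $\widetilde\tau=\tau\circ\rho$ together with the endpoint-lifting argument is sound; the key step, that a connected terminal segment of $\widetilde\gamma$ landing in an evenly covered $\rho^{-1}(U)$ must sit in a single sheet, is exactly right. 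The deduction of finiteness of $\pi_1(S)$ from compactness of the Cauchy surface of the universal cover (via Theorem~\ref{LowCompact}) and discreteness of the fiber is also correct.

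Two small remarks. First, Theorem~\ref{coveringtheorem} is stated for $n\geq 2$, so your derivation formally inherits that restriction; you may wish to note that the $n=1$ case is elementary (skies are two-point sets). Second, when you invoke Theorem~\ref{coveringtheorem} solely to obtain strong causality of $\widetilde X$ in proving the causal fact, note that the strong-causality part of that proof does not use the refocusing hypothesis at all, so your causal fact holds for arbitrary globally hyperbolic $(X,g)$, not only refocusing ones; this is harmless here since refocusing is assumed throughout, but it is worth stating cleanly.
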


\section{A Homeomorphism of a space-time with its Sky Space}

In determining suitable conditions under which a space-time and its sky space are diffeomorphic via the map sending each point to its own sky, Robert J. Low \cite[p. 5]{LowCones}, \cite[p. 6]{LowNullgeodesics}, \cite[p. 46]{LowRefocusing} introduced the three definitions of \emph{weak refocusing}, also called \emph{refocusing}.  These definitions were given in Definition~\ref{refocusing} and we proved that they are equivalent.  

Low observed that for a strongly causal, non-refocusing space-time, the map to its sky space which sends each event to its sky is a diffeomorphism.  Low \cite[p. 6]{LowNullgeodesics} states that this map is a homeomorphism and proceeds to proving that it is a diffeomorphism.  He did not address the question why this is a homeomorphism in his work.  Low encouraged me to include the formal proof of this statement into my dissertation.

First we will need the following proposition which states that this map is indeed a homeomorphism for globally hyperbolic non-refocusing space-times.  The thorough sketch of the proof of Proposition~\ref{mainprop2} was provided to me by Vladimir Chernov.

\begin{prop}\label{mainprop2}
If $(X,g)$ is a globally hyperbolic non-refocusing space-time, then $X$ is homeomorphic with its sky space $SKY(X)$ via the map $f:X\rightarrow SKY(X): x\mapsto O_x$.
\end{prop}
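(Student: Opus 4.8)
The plan is to exhibit $f$ as a continuous bijection whose inverse is continuous; the non-refocusing hypothesis will enter decisively --- and essentially only --- in the continuity of $f^{-1}$, while global hyperbolicity enters by guaranteeing that the null-geodesic space $\mathcal N$ is a Hausdorff smooth manifold, hence metrizable, so that the hyperspace of compact subsets of $\mathcal N$, in which $SKY(X)$ sits with its natural topology, is metrizable. Since $X$ and $SKY(X)$ are then both metrizable, I would argue with sequences throughout, using that a sequence converges in $SKY(X)$ precisely when the corresponding skies converge in the Hausdorff metric on compact subsets of $\mathcal N$.

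The routine parts are bijectivity and continuity of $f$. Surjectivity is immediate since $SKY(X)=f(X)$ by definition. For injectivity, if $O_x=O_y$ with $x\neq y$, then every null-geodesic through $y$ is one of the null-geodesics through $x$ and so passes through $x$, making $(X,g)$ strongly refocusing, hence refocusing, at $x$ --- contrary to hypothesis. For continuity of $f$, write $O_x=\pi(SN_xX)$ for the continuous, open projection $\pi\colon SNX\to\mathcal N$ of the spherical future null bundle, whose bundle projection $p\colon SNX\to X$ is proper with compact fiber $\mathbb S^{n-1}$. Given $x_m\to x$ and a compact neighborhood $K\ni x$, the $O_{x_m}$ eventually lie in the compact set $\pi(p^{-1}(K))\subset\mathcal N$; local triviality of $SNX$ gives $O_x\subset\liminf O_{x_m}$ (lift each null direction at $x$ to nearby directions at $x_m$) and compactness of $p^{-1}(K)$ gives $\limsup O_{x_m}\subset O_x$ (extract a convergent subsequence of the lifting directions), so that $O_{x_m}\to O_x$ in the Hausdorff metric. (If $SKY(X)$ carries the finer $C^\infty$-topology on Legendrian submanifolds, this step must instead be read off from the smooth dependence of the family of null-geodesics through $x$ upon $x$ --- the same input Low uses afterwards to upgrade to a diffeomorphism.)

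The heart of the matter is the continuity of $f^{-1}$. Suppose $O_{x_m}\to O_x$ in $SKY(X)$ but $x_m\not\to x$; after passing to a subsequence, $x_m\notin U_0$ for some open $U_0\ni x$. Since $(X,g)$ is \emph{not} refocusing at $x$, negating condition~(i.) of Proposition~\ref{maintheorem1} applied with $V=U_0$ produces an open $U$ with $x\in U\subset U_0$ such that \emph{any point all of whose null-geodesics meet $U$ must itself lie in $U$}. Now set $A_U=\{[\gamma]\in\mathcal N : \gamma\text{ meets }U\}=\pi(p^{-1}(U))$; this is open in $\mathcal N$ because $\pi$ is an open map (proof of Proposition~\ref{skyembedding}) and $p^{-1}(U)$ is open, and $O_x\subset A_U$ because $x\in U$. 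As $O_x$ is compact and $\mathcal N$ is metrizable, $A_U$ contains an $\varepsilon$-neighborhood of $O_x$, so $O_{x_m}\subset A_U$ for all large $m$ --- that is, every null-geodesic through $x_m$ meets $U$. The non-refocusing property then forces $x_m\in U\subset U_0$, a contradiction. Hence $x_m\to x$, $f^{-1}$ is continuous, and $f$ is a homeomorphism.

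I expect the main obstacle to be precisely this last step: extracting from ``$O_{x_m}$ is Hausdorff-close to $O_x$'' the statement that \emph{every} null-geodesic through $x_m$ enters a prescribed small neighborhood $U$ of $x$, and then feeding this into the \emph{exact} logical negation of Low's definition of refocusing. One must also be careful that the topology on $SKY(X)$ is neither too fine nor too coarse relative to the Hausdorff metric on compact subsets of $\mathcal N$ --- which is the place where global hyperbolicity, by making $\mathcal N$ a well-behaved manifold, genuinely cannot be dispensed with. Bijectivity and the continuity of $f$ are routine by comparison.
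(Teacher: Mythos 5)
Your argument is correct in substance, but it takes a genuinely different route from the paper on the continuity of $f$, and essentially the same route on the openness of $f$ (your continuity of $f^{-1}$). For openness, your key step --- the set $A_U=\pi(p^{-1}(U))$ of null-geodesics meeting $U$ is open in $\mathcal N$, and the negation of refocusing at $x$ forces any $y$ with $O_y\subset A_U$ to lie in $U$, so the base element $B_{A_U}$ sits inside $f(U)$ --- is exactly the paper's mechanism, except that the paper realizes this open set as $\bigcup_t\pi_t^{-1}(U\cap M_t)$ via the identification $\mathcal N\cong STM$ over Cauchy surfaces, while you get openness more cleanly from openness of the projection $SNX\to\mathcal N$. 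For continuity of $f$, however, the paper gives an elaborate geometric construction (a trivializing neighborhood $E\cong B^n\times\mathbb S^{n-1}$ of $O_y$, a radius-$A$ ball $Z$ with $\pi^{-1}(Z)\subset W$, and then a neighborhood $V=D(M\cap C^+\cap C^-)$ built from past/future cones and a Cauchy development), whereas you use only properness of the spherical null bundle over a compact neighborhood of $x$ together with continuity of $\pi$; this soft compactness argument is shorter, does not use non-refocusing, and in fact barely uses global hyperbolicity (only to know $\mathcal N$ is Hausdorff and metrizable), which is a genuine simplification. One caution: the paper's topology on $SKY(X)$ is not the Hausdorff-metric (Vietoris) topology you invoke, but the coarser ``upper'' topology whose base elements at $O_x$ are the sets of skies contained in an open $W\supset O_x$; your identification of the two is not justified. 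This is harmless here --- your Hausdorff convergence of $O_{x_m}$ is more than enough for convergence in the paper's topology, and in the $f^{-1}$ step the containment $O_{x_m}\subset A_U$ for large $m$ follows directly from the definition of that topology without any $\varepsilon$-neighborhood --- but you should either argue with the paper's base elements directly (which removes the metric entirely from the openness step) or justify the sequential treatment of $f^{-1}$ by noting that $SKY(X)$ is first countable, since each sky is compact and $\mathcal N$ is metrizable, so the sets $B_{W_{1/n}}$ with $W_{1/n}$ the $1/n$-neighborhood of $O_x$ form a countable base.
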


\begin{proof}
Clearly $f$ is surjective.  If $x\neq y$ but $O_x=O_y$, then all null-geodesics through $y$ pass through $x$, so $X$ is strongly refocusing and hence refocusing, a contradiction.  Thus $f$ is bijective.

To show $f$ is an open map, it suffices to show that for each $x\in X$, there are arbitrarily small open neighborhoods $U$ of $x$ such that $f(U)$ is open in $SKY(X)$.
Choose arbitrarily small $U$ as in the definition of non-refocusing, i.e. for all $y\notin U$, there exists a null-geodesic through $y$ not entering $U$.  Since $X$ is globally hyperbolic, $X$ is diffeomorphic to $M\times \mathbb{R}$ where each $M\times \{t\}$ is a smooth spacelike Cauchy surface.  For each $t\in \mathbb{R}$, let $M_t=M\times \{t\}$ and let $\pi_t:STM_t\rightarrow M_t$ be the corresponding fibration.  Since $\pi_t$ is continuous, the set $W=\bigcup_{t\in \mathbb{R}} \pi_t^{-1}(U\cap M_t)$ is open in $\mathcal{N}\cong STM$, identifying the $STM_t$'s with $STM$ for a fixed $M=M\times \{t\}$.  The set $O$ of all skies contained in $W$ is an open subset of $SKY(X)$, namely the base element at $O_x$ in $SKY(X)$ defined by $W$.  Thus, it suffices to show $f(U)=O$.  We show both containments.\\
$\mathbf{(i.)}O\subset f(U)$, i.e. no points outside of $U$ are mapped to $O$.  Suppose by way of contradiction that $y\notin U$ but $O_y\in O$, so that each null-geodesic through $y$ is contained in $W$, thus it's contained in $\pi_t^{-1}(U\cap M_t)$ for some $t$, and so passes through $U$, a contradiction.\\
$\mathbf{(ii.)}f(U)\subset O$.  Suppose $y\in U$.  By definition, $O$ is the set of all skies contained in $W$, so we need only show $O_y\subset W$.  Since $y\in U$, we have $y\in U\cap M_t$ for some $t$, so 
$O_y=\pi_t^{-1}(y)\in \pi_t^{-1}(U\cap M_t)$, hence $O_y\subset W$.

It remains to show that $f$ is continuous.  Let $x\in X$, and let $U$ be an open neighborhood of $O_x$ in $SKY(X)$. We show that $f^{-1}(U)$ is open in $X$, i.e. for all $y\in f^{-1}(U)$, there exists an open neighborhood $V$ of y such that $f(V)\subset U$.  It suffices to assume that $U$ is a base element at $O_x$.  Thus, $U$ is the set of all skies contained in $W$, where $W$ is an open subset of $\mathcal{N}\cong STM$ containing $O_x$.  Here $M$ is a smooth spacelike Cauchy surface of $X\cong M\times \mathbb{R}$.  Without loss of generality, $M=M_0$ is the Cauchy surface containing $y$.  We denote the projection by $\pi=\pi_0: STM\to M$.  We have $f^{-1}(U)=\{z:O_z\subset W\}$, and $O_y=\pi^{-1}(y)$ is an $(n-1)$-sphere in $STM$ where $dim(X)=n+1$.  We show that there exists an open neighborhood $Z$ of $y$ in $M$ with $\pi^{-1}(Z)\subset W$.

Observe that there exists an open neighborhood of $E$ of $O_y$ in $\mathcal{N}\cong STM$ such that $E\cong (B^n)\times \mathbb{S}^{n-1}$ where $B^n$ denotes an open $n$-ball.  This holds by the local trivialization of the spherical tangent bundle $STM$.  If $E\subset W$ then we put $Z=B^n$ so that $\pi^{-1}(Z)\subset W$.  Thus without loss of generality we may assume $E\setminus W\neq \phi$.  Define a distance function $d$ on $E$ by $d=\sqrt{(d_{g|_{M}})^2+m^2}$ where $d_{g|_{M}}$ is the metric on the open ball $B^n$ coming from $(M,g|_{M})$ and $m$ is the standard Riemannian metric on the Sphere.  Define $G: E\to \mathbb{R}$ by $G(p)=inf\{d(p,q): q\in E\setminus W\}$.  Now $G(p)>0$ for all $p\in \pi^{-1}(y)$ since $E\setminus W$ is a closed subset of $E$, so if $G(p)=0$ then $p\in E\setminus W$, a contradiction.  The function $G$ is continuous and the set $\pi^{-1}(y)$ is a compact subset of $E$ being a sphere, hence by the extreme value theorem $G$ obtains a minimum value $A>0$ on $\pi^{-1}(y)$.  Then $Z=B_A(y)\subset M$, the ball of radius $A$ with respect to $d_{g|_{M}}$ centered at $y$, is an open neighborhood of $y$ in $M$ satisfying $\pi^{-1}(Z)\subset W$ as desired.

Identify $y\in M=M_{0}\subset X$ with $(\hat{y},0)\in X\cong M\times \mathbb{R}$.  We may choose $t>0$ sufficiently small so that $(\hat{y},t)\in X\cong M\times \mathbb{R}$ satisfies:\\
\textbf{(i.)}$exp_{(\hat{y},t)}|_{exp_{(\hat{y},t)}^{-1}(C^+)}$ is an embedding, where the set $C^+\subset X\cong M\times \mathbb{R}$ is defined by $C^+= exp_{(\hat{y},t)}((PD_{(\hat{y},t)})\cap (M\times (0,\infty)))$, and $PD_{(\hat{y},t)}$ denotes the open subset of the past-directed nonspacelike cone of $(\hat{y},t)$ on which the exponential map is defined.  To see that this can be done, consider a convex neighborhood of $(\hat{y},t)$.\\
\textbf{(ii.)}Each point $p\in C^+$ has its own such cone $exp_p((PD_p)\cap (M\times (0,\infty)))$ contained in $C^+$.\\
\textbf{(iii.)}$exp_{(\hat{y},t)}((PD_{(\hat{y},t)})\cap (M\times 0))\subset Z$.

We define the set $C^-\subset X$ to be future-past analogous to $C^+$.  Now $M\cap C^+\cap C^-$ is an open neighborhood of $\hat{y}$ in $M$, so $V=D(M\cap C^+\cap C^-)$ is an open neighborhood of $y$ in $X$ which is contained in the union $C^+\cup C^-$.  (Here $D$ denotes the Cauchy development, see Definition~\ref{CauchyDevelopment}.) Consider the sky of any point in the interior of $V$.  Since $\pi^{-1}(Z)\subset W$ the intersection of the null-cone of the point with $M$ is contained in $Z$, hence the sky is contained in $W$.  It follows that the open neighborhood $V=D(M\cap C^+\cap C^-)$ of $y$ in $X$ satisfies the desired property $f(V)\subset U$.  This completes the proof of continuity.
\end{proof}

We also will need the following lemma.

\begin{lemma}\label{homeomorphismlemma}
Let $(X,g)$ be a strongly causal space-time.  Suppose that $x\in X$ and $U\ni x$ is a causally convex open neighborhood of $x$, that exists by Proposition~\ref{mainprop1}.  Let $\mathcal{N}_U$ be the space of all future-directed null-geodesics in $U$ (modulo orientation-preserving affine reparametrizations).

Then there is a one-to-one correspondence between $\mathcal{N}_U$ and the subset of $\mathcal{N}_X$ formed by those null-geodesics in $X$ which pass through $U$.  Furthermore, under this identification $\mathcal{N}_U$ is an open subset of $\mathcal{N}_X$.
\end{lemma}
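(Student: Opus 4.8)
The plan is to first construct the claimed bijection and then promote it to a homeomorphism onto an open subset, using that the leaf-space projection of the spherical null distribution is an open submersion. Since $U$ is globally hyperbolic by Proposition~\ref{mainprop1}, it is in particular a strongly causal space-time, so $\mathcal{N}_U$ carries the smooth-manifold structure of the leaf space of the spherical null distribution on $U$, exactly as $\mathcal{N}_X=\mathcal{N}$ does for $X$ (cf.~the proof of Proposition~\ref{skyembedding} and \cite[p. 205]{BrickellClark}). Write $\pi\colon SNX\to\mathcal{N}_X$ for the leaf-space projection and let $SNU\subset SNX$ be the subbundle of future null directions based at points of $U$; since $U$ is open, $SNU$ is open in $SNX$, and its spherical null distribution is the restriction of that of $X$.

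For the bijection, assign to an inextendible future-directed null-geodesic of $U$ its unique inextendible extension in $X$; this gives a well-defined map $\iota\colon\mathcal{N}_U\to\mathcal{N}_X$ whose image lies in the set $\mathcal{N}_X^U$ of inextendible null-geodesics of $X$ that meet $U$. The crux is that for any future-directed null-geodesic $\gamma$ of $X$, the set $\gamma^{-1}(U)\subset\mathbb{R}$ is a single open interval: it is open because $U$ is open and $\gamma$ is continuous, and it is order-convex because $\gamma$ is a causal curve and $U$ is causally convex in $X$. Hence $\gamma\cap U$ is a single inextendible future-directed null-geodesic of $U$, which furnishes the two-sided inverse of $\iota$; thus $\iota\colon\mathcal{N}_U\to\mathcal{N}_X^U$ is a bijection.

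For openness and the homeomorphism, note first that $\pi(SNU)=\mathcal{N}_X^U$: a point of $SNU$ is a future null direction at some $p\in U$, and $\pi$ sends it to the null-geodesic through $p$, which meets $U$; conversely every null-geodesic meeting $U$ has a direction at some point of $U$. Since $\pi$ is a submersion it is open (as in the proof of Proposition~\ref{skyembedding}), so $\mathcal{N}_X^U=\pi(SNU)$ is open in $\mathcal{N}_X$, and, $SNU$ being open, $\pi|_{SNU}\colon SNU\to\mathcal{N}_X^U$ is still an open surjection. Finally, two elements of $SNU$ have the same image under $\pi$ iff they lie along a common null-geodesic of $X$, and—because $\gamma\cap U$ is connected for each such $\gamma$ by the previous paragraph—iff they lie along a common inextendible null-geodesic of $U$, i.e.~iff they are identified in the leaf space $\mathcal{N}_U$. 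Therefore $\pi|_{SNU}$ descends through the (open) quotient map $SNU\to\mathcal{N}_U$ to precisely $\iota$, which is thus a continuous open bijection, hence a homeomorphism of $\mathcal{N}_U$ onto the open subset $\mathcal{N}_X^U\subset\mathcal{N}_X$.

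The one genuinely substantive step is the connectedness of $\gamma\cap U$: this is exactly where causal convexity is used, and it simultaneously underlies the bijectivity in the second paragraph and the compatibility of the two quotient relations in the third. Once that is in hand the rest is a routine manipulation of open submersions and quotient topologies; a minor care-point is to obtain that $\iota$ is open (not merely continuous) by factoring the comparison map through the open maps $\pi|_{SNU}$ and $SNU\to\mathcal{N}_U$ rather than arguing openness of $\iota$ directly.
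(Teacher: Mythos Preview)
Your proof is correct and in fact establishes more than the paper's: you show that the bijection $\iota$ is a homeomorphism, whereas the paper's proof only verifies the one-to-one correspondence and the openness of the image. Both proofs dispatch the bijection in the same way, via causal convexity of $U$ forcing $\gamma\cap U$ to be connected.

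Where the arguments genuinely diverge is in the openness step. The paper works on the preimage side: it shows that the preimage $V\subset FNX$ of $\mathcal{N}_U$ under $FNX\to SNX\to\mathcal{N}_X$ is open by proving its complement is sequentially closed, using continuity of the exponential map (if $\mathbf{v}_n\to\mathbf{v}$ with $\exp(l\mathbf{v})\in U$, then $\exp(l\mathbf{v}_n)\in U$ for large $n$). You instead work on the image side: since $SNU\subset SNX$ is open and $\pi$ is an open map (being a submersion, as already invoked in Proposition~\ref{skyembedding}), the set $\mathcal{N}_X^U=\pi(SNU)$ is open directly. Your route is more structural and avoids the sequential argument; it also pays an extra dividend, since factoring $\pi|_{SNU}$ through the two open quotient maps with identical fibers (this is exactly where you reuse the connectedness of $\gamma\cap U$) immediately gives that $\iota$ is both continuous and open. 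That homeomorphism statement is tacitly used later in the proof of Theorem~\ref{maintheorem2} when identifying the two topologies on $SKY(U)$, so your making it explicit is a genuine improvement.
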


\begin{proof}
The causal convexity of $U$ gives a one-to-one correspondence between the set $\mathcal{N}_U$ of null-geodesics in $U$ and the set of null-geodesics in $X$ which enter $U$.  Thus we may identify $\mathcal{N}_U$  with a subset of $\mathcal{N}_X$.

Recall that $SNX$ is the quotient of the future null bundle $FNX$ by the group action of $\mathbb{R}^+$, and $\mathcal{N}_X$ is the quotient of $SNX$ by the equivalence relation identifying two null-directions which belong to a common null-geodesic.  We want to show that the preimage of $\mathcal{N}_U\subset \mathcal{N}_X$ under the natural projection $SNX\to \mathcal{N}_X$ is an open subset of $SNX$.  By the definition of the quotient topology it suffices to show that the preimage $V$ of $\mathcal{N}_U$ under the composition of quotient maps $FNX\to SNX\to \mathcal{N}_X$ is an open subset of $FNX$.

We show that $FNX\setminus V$ is closed in $FNX$.  $FNX\setminus V$ is the set of all future-directed null vectors in $X$ which are the velocity vectors of geodesics not passing through $U$.  Suppose, by way of contradiction, that a sequence $\{\mathbf{v_n}\}_{n=1}^\infty$ of vectors in $FNX\setminus V$ converges to a vector $\mathbf{v}\in V$.  Note that $exp(l\mathbf{v})\in U$ for some $l\in \mathbb{R}$.  Then $exp(l\mathbf{v_n})\rightarrow exp(l\mathbf{v})\in U$, a contradiction because a sequence of points not in $U$ cannot converge to a point in $U$ (since $X\setminus U$ is closed in $X$).  Note that $exp(l\mathbf{v_n})$ is defined for sufficiently large $n$ because the exponential map is defined on an open subset of $TX$.
\end{proof}

\begin{theorem}\label{maintheorem2}
If $X$ is a strongly causal non-refocusing space-time, then the map $f:X\to SKY(X): x\mapsto O_x$ is a homeomorphism.
\end{theorem}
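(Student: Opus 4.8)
The plan is to localize the problem to the globally hyperbolic setting of Proposition~\ref{mainprop2} and then patch. First I would dispose of bijectivity: $f$ is onto by the definition of $SKY(X)$, and if $O_x=O_y$ with $x\neq y$ then every null-geodesic through $y$ meets $x$, so $(X,g)$ is strongly refocusing, hence refocusing, contrary to hypothesis. It remains to show $f$ is continuous and open, and for this it suffices to cover $X$ by open sets $U$ on which $f$ restricts to a homeomorphism onto an \emph{open} subset of $SKY(X)$: then continuity of $f$ is local, and $f(W)=\bigcup_U f(W\cap U)$ is open for every open $W\subseteq X$, so the bijection $f$ is a homeomorphism. (The openness of the pieces $f(U)$ is essential here; a continuous bijection that is a local homeomorphism onto non-open subspaces need not be a homeomorphism.)

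The heart of the matter is the local claim that for each $x\in X$ and each neighborhood $N$ of $x$ there is a causally convex, convex normal neighborhood $U$ of $x$ with compact closure (hence globally hyperbolic, by Proposition~\ref{mainprop1}), with $U\subseteq N$, satisfying: $(\ast)$ no point $z\notin U$ has the property that every null-geodesic of $X$ through $z$ meets $U$. This is the refinement of non-refocusing that makes the corresponding sky-neighborhood open. To prove it, suppose $(\ast)$ fails for every such $U$ inside some $N$. Using Proposition~\ref{mainprop1} and the coincidence of the Alexandrov and manifold topologies (strong causality), fix a globally hyperbolic neighborhood $V\subseteq N$ of $x$ and a decreasing neighborhood basis $\{U_n\}$ at $x$ inside $V$ with each $U_n=I^+(p_n)\cap I^-(q_n)$ causally convex, convex normal, with compact closure; by assumption each $U_n$ fails $(\ast)$, giving $z_n\notin U_n$ all of whose null-geodesics meet $U_n$. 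If some subsequence of $\{z_n\}$ avoids a fixed neighborhood $\mathcal V$ of $x$, then (since every neighborhood of $x$ eventually contains some $U_n$) that subsequence witnesses refocusing of $X$ at $x$ in the sense of part (iii) of Proposition~\ref{maintheorem1}, a contradiction. Otherwise $z_n\to x$; since $V$ is globally hyperbolic and contains $x$, for large $n$ we have $z_n\in V$, and then $\{z_n\}$, $\{U_n\}$ form exactly the configuration shown impossible by the five-case analysis at the end of the proof of Proposition~\ref{maintheorem1}. Either way non-refocusing is contradicted, proving $(\ast)$. Note also that any such $U$ is a strongly causal (being globally hyperbolic) non-refocusing space-time: causal convexity puts the null-geodesics of $(U,g|_U)$ through a point $p\in U$ in bijection with the traces in $U$ of the null-geodesics of $X$ through $p$, so refocusing of $(U,g|_U)$ at $p$ would force refocusing of $(X,g)$ at $p$.

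Given such a $U$, I would then assemble the pieces. By Lemma~\ref{homeomorphismlemma}, $\mathcal N_U$ is, via the correspondence "a null-geodesic of $U$ is the part lying in $U$ of a unique null-geodesic of $X$ meeting $U$'', an open subset of $\mathcal N$, and under this correspondence the sky of $y\in U$ in $\mathcal N_U$ is carried to its sky $O_y$ in $\mathcal N$. Property $(\ast)$ says precisely that $SKY(U)=\{O\in SKY(X):O\subseteq\mathcal N_U\}$, a basic open subset of $SKY(X)$; moreover the intrinsic topology of $SKY(U)$ is the subspace topology inherited from $SKY(X)$, since every sky of a point of $U$ lies in $\mathcal N_U$ and $\mathcal N_U$ is open in $\mathcal N$, so the two families of basic open sets coincide. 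Since $(U,g|_U)$ is globally hyperbolic and non-refocusing, Proposition~\ref{mainprop2} makes $f_U\colon U\to SKY(U),\ y\mapsto O_y$, a homeomorphism; composing with the open inclusion $SKY(U)\hookrightarrow SKY(X)$ exhibits $f|_U$ as a homeomorphism onto the open set $SKY(U)$. As these $U$ cover $X$, the patching principle of the first paragraph completes the proof.

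The main obstacle is the local claim $(\ast)$: producing causally convex globally hyperbolic neighborhoods small enough that no outside point has all its light rays meeting them. This is where non-refocusing is genuinely used, through a reprise of the argument behind Proposition~\ref{maintheorem1}. The only other point requiring care is the identification of $SKY(U)$ as an \emph{open} topological subspace of $SKY(X)$, for which $(\ast)$ and the openness of $\mathcal N_U$ in $\mathcal N$ (Lemma~\ref{homeomorphismlemma}) are used together; everything else is either formal or a direct appeal to Propositions~\ref{mainprop1} and~\ref{mainprop2}.
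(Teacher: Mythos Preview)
Your approach is essentially the paper's: both construct around each point a causally convex globally hyperbolic neighborhood $U$ satisfying $(\ast)$, invoke Lemma~\ref{homeomorphismlemma} for the openness of $\mathcal N_U$ in $\mathcal N$, and apply Proposition~\ref{mainprop2} on $U$, with the paper handling openness and continuity of $f$ separately while you package them together as ``local homeomorphism onto an open set.'' One minor slip: in proving $(\ast)$ you take the basis $\{U_n\}$ to be simultaneously Alexandrov diamonds $I^+(p_n)\cap I^-(q_n)$ \emph{and} convex normal, but Alexandrov diamonds are causally convex without being geodesically convex in general, so your hypothesis ``$(\ast)$ fails for every causally convex, convex normal $U$ with compact closure'' need not apply to them. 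The paper sidesteps this by asking of $U$ only causal convexity and global hyperbolicity (both of which $I^+(p)\cap I^-(q)$ inside $V$ enjoys): it first takes some $U'\subset V$ with $(\ast)$ directly from non-refocusing, then shrinks to an Alexandrov $U\subset U'$ and checks $(\ast)$ for $U$ by using $(\ast)$ for $U'$ when $z\notin U'$ and the five-case analysis when $z\in V\setminus U$; making the same relaxation fixes your argument without changing anything else.
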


\begin{proof}
Clearly $f$ is surjective.  $f$ is injective because $X$ is non-refocusing.  (If $O_x=O_y$ for $x\neq y$ then all null-geodesics through $x$ pass through $y$, contradicting non-refocusing.) We show \textbf{(I.) $f$ is an open map} and \textbf{(II.) $f$ is continuous.}  Clearly this proves the theorem.
\\

\textbf{(I.)$f$ is an open map.}  Let $Y$ be an open subset of $X$.  We show $f(Y)=\{O_y: y\in Y\}$ is an open neighborhood of each of its points in $SKY(X)$.  Take any $x\in Y$.  Because $X$ is strongly causal, by Proposition~\ref{mainprop1} there exists a globally hyperbolic open neighborhood $V$ of $x$ contained in $Y$ which is convex normal and causally convex and has compact closure.  Since $X$ is non-refocusing, there is an open neighborhood $U'$ of $x$ contained in $V$ (and hence contained in $Y$) such that for every $y\notin U'$, not all null-geodesics through $y$ enter $U'$.  Let $U=I^+(p)\cap I^-(q)\subset U'$ for $p,q\in V$.  (Such a neighborhood $U$ exists by Theorem~\ref{KP} \cite[Theorem 4.24]{Penrose}.)  Then for all $y\notin U$, not all null-geodesics through $y$ enter $U$.  To see this, if $y\notin V$ then not all null-geodesics enter $U\subset U'$ because not all geodesics through $y$ enter $U'$.  If $y\in V\setminus U$ then not all null-geodesics through $y$ enter $U$ by the proof of Proposition~\ref{maintheorem1}.

We show that $f(U)$ (and hence $f(Y)$) contains a neighborhood base element $B$ at $O_x$.  Let $\mathcal{N}_X$ be the space of all future-directed null-geodesics in $X$ modulo orientation-preserving affine reparametrizations.  Let $W=\mathcal{N}_U$ be the subset of $\mathcal{N}_X$ formed by those null-geodesics in $X$ which enter $U$.  By Lemma~\ref{homeomorphismlemma} $W=\mathcal{N}_U$ is an open subset of $\mathcal{N}_X$ since $U$ is causally convex.  (If $x,z\in U$ and $x\leq y\leq z$ then $p<x \leq y \leq z<q$ so $y\in U$.)  Let $B$ be the base element of $SKY(X)$ at $O_x$ consisting of all skies contained in $W$.  Then $B\subset f(U)$.  To see this, suppose $O_y\in B$ but $y \notin U$.  Not all null-geodesics through $y$ can enter $U$.  Thus, $O_y$ cannot belong to $f(U)$, a contradiction.  Therefore $f(Y)$ is a neighborhood of each of its points.  This shows that $f$ is an open map.
\\
\textbf{(II.)$f$ is continuous.}  Let $x\in X$.  We show that $f$ is continuous at $x$.  It suffices to show that the restriction of $f$ to an open neighborhood of $x$ is continuous.

Define $V$ and $U$ as in part I above.  A sky in $SKY(U)$ cannot be the sky of an element $y\notin U$ because $U$ is chosen to satisfy the definition of non-refocusing.  Thus $SKY(U)=\{O_y: y\in U\}$ may be identified with the subset of $SKY(X)$ formed by the skies of points in $U$.

By Proposition~\ref{mainprop2} the map $F: U\to SKY(U)$ given by restriction of the domain and codomain of $f: X\to SKY(X)$ is continuous because $U$ is globally hyperbolic.  The map $f$ is therefore continuous because it is the composite of continuous maps $f=i \circ F$ where $i: SKY(U)\to SKY(X)$ is the inclusion.  The map $i$ is indeed the inclusion map because the sky space $SKY(U)$ of $U$ has the subspace topology coming from $SKY(X)$.  To see this, let $\mathcal{N}_U$ be the space of null-geodesics in $U$ modulo orientation-preserving affine reparametrizations.  For $x\in U$ a $U$-base at $O_x$ (for $SKY(U)$ as a sky space) is the set of all skies in $SKY(U)$ contained in an open subset $W$ of $\mathcal{N}_U$ containing $O_x$.  This is exactly the same as the subspace topology: an $X$-base at $O_x$ (for $SKY(U)$ as a subspace) is the set of all skies in $SKY(U)$ contained in an open subset $W'$ of $\mathcal{N}_X$ containing $O_x$.  These two topologies on $SKY(U)$ coincide because $\mathcal{N}_U$ is an open subset of $\mathcal{N}_X$ by Lemma~\ref{homeomorphismlemma}.
\end{proof}

\section{Weak but Not Strong Refocusing at a Point or on a Hypersurface}

Strong refocusing implies refocusing.  The converse is not clear, even for globally hyperbolic space-times.  This is one of the central problems which motivated the results of this work.

We begin the investigation of whether or not refocusing implies strong refocusing.  We start by constructing the first known examples of space-times which are refocusing at an event $p$ but not strongly refocusing at $p$. Some of these space-times are globally hyperbolic and some of them have this property at all points $p$ of a hypersurface (codimension-one submanifold).

\begin{example}
\emph{Refocusing but not strong refocusing at a point.}  Removing two points from the Einstein Cylinder, $(\mathbb{S}^n\times \mathbb{R}\setminus \{(-p,-\pi),(-p,\pi)\},m\oplus -dt^2)$ is a strongly causal space-time which is refocusing at $(p,0)$ but not strongly refocusing at $(p,0)$.  However, this space-time is still strongly refocusing because it is strongly refocusing at all other points.  
(Here $m$ denotes the standard Riemannian metric on the unit sphere $\mathbb{S}^n\subset \mathbb{R}^{n+1}$.)
\end{example}

\begin{example}
\emph{Refocusing but not strong refocusing at all points along a curve.}  
Take $c>0$ and $p\in \mathbb{S}^1.$  
Remove two vertical slits $p\times ((-\infty,0]\cup [c,\infty))$ and $-p\times((-\infty,\pi]\cup [\pi+c,\infty))$ from the Eisntein cylinder  $(\mathbb{S}^1\times \mathbb{R}, m\oplus-dt^2)$. The result is a strongly causal space-time which is refocusing but not strongly refocusing at all points along a curve defining the boundary of the region where strong refocusing occurs.  This region is disconnected if $c\leq 2\pi$ and connected if $c>2\pi$.
\end{example}

\begin{example}\label{ghexample}
\emph{Refocusing but not strong refocusing at all points along a hypersurface.}  Take $c$ such that $0<c<2\pi.$  The space-time $(\mathbb{S}^n\times(0,c),m\oplus-dt^2)$ is the globally hyperbolic space-time obtained by restricting the Einstein Cylinder metric on $\mathbb{S}^n\times \mathbb{R}$ to the open subset $\mathbb{S}^n\times (0,c)$.  This space-time is:\\
(a.) strongly refocusing at all points in the Cauchy surfaces $\mathbb{S}^n\times \{t\}$ satisfying $t<c-\pi$ or $\pi<t$.\\
(b.) refocusing but not strongly refocusing at all points in the Cauchy surfaces $\mathbb{S}^n\times \{t\}$ satisfying $t=\pi$ or $t=c-\pi$.\\
(c.) it is neither refocusing nor strongly refocusing elsewhere.
\end{example}

\begin{prop}\label{conformalprop}
Let $(X,g)$ be a strongly causal space-time which is refocusing at $x\in X$ and suppose $h$ is a Lorentz metric which is conformal to $g$, i.e. $h=\Omega g$ for a smooth positive function $\Omega: X\to (0,\infty)$.  Then $(X,h)$ is also refocusing at $x$.  In other words, refocusing is preserved by a conformal change of the metric.
\end{prop}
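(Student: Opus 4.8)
The plan is to exploit the single most important fact about conformal changes in Lorentzian geometry: the causal structure, and in particular the set of null geodesics \emph{as point sets}, is a conformal invariant. If $h = \Omega g$ with $\Omega > 0$ smooth, then a curve $\gamma$ is a null geodesic of $g$ (up to reparametrization) if and only if its image is the image of a null geodesic of $h$ (up to reparametrization). Consequently the space $\mathcal{N}_{(X,g)}$ of unparametrized null geodesics coincides with $\mathcal{N}_{(X,h)}$, and for every event $z \in X$ the sky $O_z$ is literally the same subset of $\mathcal{N}$ whether computed with $g$ or with $h$. Moreover the chronological/causal relations $I^\pm, J^\pm$ and hence the manifold and Alexandrov topologies are unchanged, so $(X,h)$ is strongly causal iff $(X,g)$ is. I would state these facts at the outset, citing the standard references (e.g.\ the appendix or \cite{BEE}), since they do all the real work.

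With that in hand the proof is essentially a formal check. First I would verify $(X,h)$ is strongly causal: strong causality is a purely causal condition (existence, for every $p$, of arbitrarily small neighborhoods not re-entered by any causal curve), and causal curves for $g$ and $h$ are the same curves, so this is immediate. Next, use characterization (i) (equivalently (iii)) of refocusing from Proposition~\ref{maintheorem1}. Since $(X,g)$ is refocusing at $x$, there is an open neighborhood $V$ of $x$ such that for every open $U$ with $x \in U \subset V$ there is an event $y \notin U$ all of whose $g$-null geodesics enter $U$. Because the $g$-null geodesics through $y$ and the $h$-null geodesics through $y$ trace out exactly the same subsets of $X$, the very same $V$, $U$, and $y$ witness condition (i) for $(X,h)$ at $x$. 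Hence $(X,h)$ is refocusing at $x$ by Definition~\ref{refocusing}.

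I do not expect a genuine obstacle here; the proposition is "soft." The only point requiring a little care is making the conformal invariance of null geodesics precise: a $g$-null geodesic need not be an \emph{affinely parametrized} $h$-geodesic, only a pregeodesic (geodesic up to reparametrization) for $h$, and one should note that the definition of $\mathcal{N}$ already quotients by orientation-preserving affine reparametrization — so one must check the reparametrization relating the two can be taken orientation-preserving and, if one wants to be thorough, that it is smooth and monotone. This follows from the standard formula for how the geodesic equation transforms under $h = \Omega g$ (the null condition kills the term involving $\nabla\log\Omega$ up to a tangential piece), and time-orientation is preserved since $\Omega > 0$. Once that remark is made, the rest is bookkeeping, and I would keep the written proof to a short paragraph invoking Proposition~\ref{maintheorem1}(i).
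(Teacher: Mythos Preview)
Your proposal is correct and follows essentially the same approach as the paper: both rest on the single fact that the images of null geodesics are invariant under a conformal change of the metric (the paper cites \cite[Lemma~9.17]{BEE}), so the same witnesses $V$, $U$, $y$ work for $(X,h)$. Your write-up is in fact more careful than the paper's three-line proof, since you explicitly verify that $(X,h)$ inherits strong causality and you flag the pregeodesic/reparametrization subtlety; the paper leaves both points implicit.
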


\begin{proof}
By \cite[Lemma 9.17]{BEE} the image of a null-geodesic under a conformal change is a null-geodesic (with proper reparametrization).  Thus for each null-geodesic $\gamma$ its image $Im(\gamma)\subset X$ is preserved by conformal change.  Therefore $(X,g)$ and $(X,h)$ have the same refocusing properties.
\end{proof}

\begin{remark}
All the examples we have discussed of space-times which are refocusing but not strongly refocusing at a particular point are geodesically incomplete.  In fact, each example above is timelike incomplete, null incomplete, and spacelike incomplete.  On the first glance it seems that these examples are refocusing but not strongly refocusing at a point (or set of points) because they are null incomplete.  The following proposition guarantees that there are also examples which are null complete.
\end{remark}

\begin{prop}\label{nulltimelikecompleteprop}
There exist globally hyperbolic space-times which are null and timelike complete, and refocusing but not strongly refocusing at a point, or at all points of a hypersurface.
\end{prop}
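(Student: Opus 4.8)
The plan is to start from the globally hyperbolic space-time already built in Example~\ref{ghexample} and make it null- and timelike-complete by a single conformal change of metric, exploiting that weak refocusing, strong refocusing, and global hyperbolicity are all invariants of the conformal class, while a carefully chosen conformal factor can push the ``bad ends'' of the time interval off to infinity.

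Concretely, fix $\pi<c<2\pi$ and let $(X,g)=(\mathbb{S}^n\times(0,c),\,m\oplus-dt^2)$ be the globally hyperbolic space-time of Example~\ref{ghexample}, which is refocusing but not strongly refocusing at every point of the Cauchy surface $\Sigma=\mathbb{S}^n\times\{\pi\}$, in particular at any single point of $\Sigma$. Choose $\Omega\colon X\to(0,\infty)$ depending only on $t$, smooth and positive, bounded below by a positive constant, and blowing up fast enough at both ends of the interval; the concrete choice $\Omega(t)=\bigl(t(c-t)\bigr)^{-2}$ works, since it attains a positive minimum at $t=c/2$ and satisfies $\int_0^{c/2}\Omega=\int_{c/2}^{c}\Omega=+\infty$ as well as $\int_0^{c/2}\sqrt{\Omega}=\int_{c/2}^{c}\sqrt{\Omega}=+\infty$. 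Put $h=\Omega g$. Then $h$ and $g$ have the same causal curves, so $(X,h)$ is again globally hyperbolic; by Proposition~\ref{conformalprop} it is still refocusing at every point of $\Sigma$; and since the point-set image of a null-geodesic is a conformal invariant (\cite[Lemma~9.17]{BEE}, as used in Proposition~\ref{conformalprop}), the property ``all null-geodesics through $y$ pass through $x$'' is conformally invariant, so $(X,h)$ is still not strongly refocusing at any point of $\Sigma$.

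It remains to check completeness of $(X,h)$. For the null directions: a null $g$-geodesic, parametrized by $t$, has the form $t\mapsto(\tilde\gamma(t),t)$ with $\tilde\gamma$ a unit-speed geodesic of $(\mathbb{S}^n,m)$, and $t\in(0,c)$ is a $g$-affine parameter; under the conformal reparametrization of null-geodesics the $h$-affine parameter $\lambda$ satisfies that $d\lambda/dt$ is a positive multiple of $\Omega(t)$, so $\lambda$ ranges over all of $\mathbb{R}$. Since every inextendible null $h$-geodesic is a reparametrization of such a curve, it is complete. For the timelike directions, substitute $\tau=\int_{c/2}^{t}\sqrt{\Omega(u)}\,du$ to put $h$ in the generalized Robertson--Walker form $h=-d\tau^2+f(\tau)^2 m$ with $f=\sqrt{\Omega}\circ t(\tau)$; here $\tau$ ranges over all of $\mathbb{R}$ because $\int\sqrt{\Omega}$ diverges at both endpoints, and $f$ is bounded below by a positive constant $f_0$. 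Along a unit-speed timelike $h$-geodesic $\sigma=(\gamma_F,\tau)$ the warped-product geodesic equations give a constant $E=f(\tau)^2|\dot\gamma_F|_m$ and the identity $\dot\tau^2=1+E^2/f(\tau)^2$, so that $1\le\dot\tau^2\le 1+E^2/f_0^2$ and $|\dot\gamma_F|_m=E/f(\tau)^2\le E/f_0^2$. Hence over any bounded parameter interval $\sigma$ remains in a compact $t$-slab and $\dot\sigma$ stays bounded, so the lift $s\mapsto(\sigma(s),\dot\sigma(s))$ stays in a compact subset of $TX$; an inextendible geodesic with this property cannot have a finite parameter endpoint, so every inextendible timelike $h$-geodesic is complete. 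Thus $(X,h)$ is the desired example.

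The main obstacle is the timelike completeness in the last step: timelike geodesics are not conformally invariant, so one cannot simply reparametrize the incomplete $g$-geodesics as one does for null geodesics. The remedy is to diagonalize $h$ into Robertson--Walker form and run the conservation-law estimate, and this is precisely why $\Omega$ must be engineered to blow up at both ends quickly enough that $\int\sqrt{\Omega}=\infty$ there --- pushing the time axis out to all of $\mathbb{R}$ --- while still staying bounded below by a positive constant, so that $\dot\tau$ stays bounded and the geodesic flow cannot escape to ``infinity'' in finite parameter time. Everything else is either cited (Example~\ref{ghexample}, Proposition~\ref{conformalprop}) or a routine computation.
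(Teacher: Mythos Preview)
Your proof is correct and follows the same strategy as the paper: start from Example~\ref{ghexample} and apply a conformal change, using that refocusing, strong refocusing, and global hyperbolicity are conformal invariants. The only difference is that the paper obtains null and timelike completeness by citing \cite[Theorem~6.5]{BEE} (every strongly causal space-time is conformal to a null and timelike complete one), whereas you construct an explicit conformal factor $\Omega(t)=(t(c-t))^{-2}$ and verify completeness by hand via the Robertson--Walker reduction and the warped-product conservation law. Your explicit construction is more self-contained, but the citation makes the paper's proof a two-line affair.
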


\begin{proof}
By \cite[Theorem 6.5]{BEE}, a strongly causal space-time $(X,g)$ can be made null and timelike complete by a conformal change.  Thus the examples above may be changed by a conformal factor to be null and timelike complete while having all the same refocusing properties by Proposition~\ref{conformalprop}.
\end{proof}

These examples led us to consider the properties of the set of points at which a strongly causal space-time is refocusing.  The preceding example shows that the set of points at which a strongly causal space-time is strongly refocusing need not be closed.  However, the set of points at which a strongly causal space-time is refocusing is closed.

\begin{theorem}\label{closedtheorem}
Let $(X,g)$ be a strongly causal space-time.  The possibly empty set of events at which $(X,g)$ is refocusing is closed.
\end{theorem}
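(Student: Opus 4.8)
The plan is to show that the complement, the set of events at which $(X,g)$ is \emph{not} refocusing, is open. So let $x\in X$ be a point at which $(X,g)$ is non-refocusing. By Proposition~\ref{mainprop1}, choose a causally convex, convex normal open neighborhood $V$ of $x$ with compact closure; by Proposition~\ref{mainprop1} again $V$ is globally hyperbolic. Since $X$ is non-refocusing at $x$, using the formulation in Proposition~\ref{closedprop} (or directly, definition (i) of Proposition~\ref{maintheorem1}) there is a neighborhood $U_0$ of $x$ with $x\in U_0\subset V$ such that for every $y\notin U_0$ some null-geodesic through $y$ misses $U_0$. The goal is to produce a single open neighborhood $U_0'$ of $x$, contained in $U_0$, which \emph{also} witnesses non-refocusing at every point $x'$ in a suitable smaller neighborhood of $x$.

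First I would shrink: pick an Alexandrov-type neighborhood $U_0'=I^+(p)\cap I^-(q)\subset U_0$ (possible by Theorem~\ref{KP}, since the Alexandrov and manifold topologies agree for strongly causal space-times), and then pick a still smaller causally convex neighborhood $N$ of $x$ with $\mathrm{cl}(N)\subset U_0'$. I claim $N$ serves as a non-refocusing witness for every $x'\in N$. Suppose not: then some $x'\in N$ is refocusing, so by Proposition~\ref{closedprop} (applied with the causally convex, convex normal, compact-closure neighborhood we may build around $x'$ inside $U_0'$) — more simply, by definition of refocusing at $x'$ with the witness neighborhood taken inside $U_0'$ — for every open $U\ni x'$ with $U\subset$ (that witness), in particular for $U=N$, there is a point $y\notin N$ (indeed, by the argument in the proof of (i)$\Rightarrow$(iii), $y$ outside the larger causally convex neighborhood) with \emph{all} null-geodesics through $y$ entering $N$. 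But $N\subset U_0'\subset U_0$, so all null-geodesics through $y$ enter $U_0$ as well. Since $U_0$ witnesses non-refocusing at $x$, we need $y\in U_0$; but we have much more, namely $y\notin V$ by the refinement in the proof of Proposition~\ref{closedprop}, and in particular $y\notin U_0$ — a contradiction. Hence no point of $N$ is refocusing, so $N$ is an open neighborhood of $x$ contained in the non-refocusing set, proving that set open.

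The main obstacle is bookkeeping the nested neighborhoods correctly: the definition of refocusing at $x'$ only promises a witness neighborhood \emph{depending on $x'$}, not a uniform one, so I must be careful that shrinking $U$ down to $N$ is legitimate (it is, since $N\subset U_0'$ and the witness neighborhood for $x'$ can itself be taken inside $U_0'$ by the "replace $V$ with any smaller neighborhood" clause in the proof of Proposition~\ref{maintheorem1}, and then Proposition~\ref{closedprop} lets me further assume it is causally convex with compact closure). Once the neighborhoods are arranged so that $\mathrm{cl}(N)\subset U_0'\subset U_0\subset V$ with $V$ globally hyperbolic and causally convex, the contradiction is immediate because the very point $y$ that refocusing at $x'$ hands us is forced outside $V\supset U_0$, yet must enter $U_0$, contradicting non-refocusing at $x$. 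I expect this argument to go through cleanly; the only subtlety worth spelling out in the written proof is the invocation of the "$y\notin V$" strengthening from the proof of (i)$\Rightarrow$(iii), which is what makes $y\notin U_0$ rather than merely $y\notin N$.
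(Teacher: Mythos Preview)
Your argument is correct and is essentially the contrapositive of the paper's proof: the paper shows directly that a limit of refocusing points is refocusing, while you show the non-refocusing set is open, but both hinge on the same observation that a single causally convex, convex normal neighborhood $V$ with compact closure serves simultaneously for $x$ and for any nearby point, so that Proposition~\ref{closedprop} can be invoked at the nearby point with this shared $V$ to produce $y\notin V$. One streamlining: rather than building a new causally convex, convex normal, compact-closure neighborhood of $x'$ inside $U_0'$ and then appealing to ``the refinement in the proof of Proposition~\ref{closedprop}'' to push $y$ outside $V$, simply apply Proposition~\ref{closedprop} at $x'$ with $V$ itself (which is already a causally convex, convex normal neighborhood of $x'$ with compact closure, since $x'\in N\subset V$) and with $U=N$; this yields $y\notin V\supset U_0$ directly, and the intermediate Alexandrov layer $U_0'$ and the closure condition on $N$ become unnecessary.
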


\begin{proof}
 Suppose that $(X,g)$ is refocusing at each event of the sequence $\{x_n\}_{n=1}^\infty$ and suppose that $x_n\rightarrow x\in X$.  We show that $(X,g)$ is refocusing at $x$.

Let $V$ be a causally convex and convex normal open neighborhood of $x$ with compact closure.  Let $U$ be an open neighborhood of $x$ contained in $V$.  Choose $K\in \mathbb{N}$ sufficiently large so that $x_K\in U$.  Since the Alexandrov and manifold topologies agree for a strongly causal space-time by Theorem~\ref{KP} \cite[Theorem 4.24]{Penrose}, there exists an open neighborhood $N$ of $x_k$ of the form $N=I^+(p)\cap I^-(q)$ such that $N\subset U$.  Since $(X,g)$ is refocusing at $x_K$, and since $V$ is also a convex normal neighborhood of $x_K$,  there exists an event $y\notin V$ such that all the null-geodesics through $y$ enter $N$ and hence $U$, by Proposition~\ref{closedprop}.  This completes the proof.
\end{proof}

\section{Weak Versus Strong Refocusing}
We continue the discussion on relations between refocusing and strong refocusing.  We do not know examples of space-times that are refocusing but not strongly refocusing.  In this section we prove that in dimensions $\leq 4$ each globally hyperbolic refocusing space-time admits a metric with respect to which it is strongly refocusing. This suggests the possibility that the result may hold in all dimensions.

This nice fact discussed below follows immediately from the Geometrization conjecture proved by Perelman~\cite{Perelman1, Perelman2, Perelman3}, and from the results
of Low~\cite{LowNullgeodesics} and Chernov and Rudyak~\cite[Theorem 14]{CR}. 
The proof is a combination of the discussions in~\cite[Section 11, Remark 7]{CR} 
and in the works of Chernov and
Nemirovski~\cite[page 1321-1322]{ChernovNemirovskiGAFA}, \cite[Example 10.5]{ChernovNemirovskiGT}.  A related discussion is contained in our paper with Chernov and Sadykov, \cite[Section 7.2]{CKS}.

\begin{theorem}\label{maintheorem4}
Every globally hyperbolic refocusing space-time of dimension $3$ or $4$ admits a globally hyperbolic strongly refocusing metric (with strong refocusing at every event).  
\end{theorem}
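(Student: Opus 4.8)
The plan is to use the hypothesis of refocusing only to pin down the topology of a Cauchy surface, and then to invoke Perelman. Write the spacetime dimension as $n+1\in\{3,4\}$, so the spatial dimension is $n\in\{2,3\}$. First I would apply the splitting theorem for globally hyperbolic spacetimes (cf.~\cite{BEE}) to write $(X^{n+1},g)$, up to diffeomorphism, as $M\times\mathbb{R}$ with $M$ a connected smooth spacelike Cauchy surface of dimension $n$. Since $(X,g)$ is refocusing, Low's Theorem~\ref{LowCompact} shows $M$ is compact, and the Chernov--Rudyak Theorem~\ref{ChernovFinite} shows $\pi_1(M)$ is finite. Thus $M$ is a closed $n$-manifold, $n\le 3$, with finite fundamental group.

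Next I would classify such $M$ up to diffeomorphism. If $n=2$, the classification of closed surfaces gives $M\cong\mathbb{S}^2$ or $M\cong\mathbb{RP}^2$, each a quotient $\mathbb{S}^2/\Gamma$ of the round $2$-sphere by a finite group $\Gamma$ of isometries acting freely. If $n=3$, this is precisely the elliptization part of the Geometrization Conjecture: by Perelman \cite{Perelman1, Perelman2, Perelman3}, a closed $3$-manifold with finite fundamental group has universal cover $\mathbb{S}^3$ and is diffeomorphic to a spherical space form $\mathbb{S}^3/\Gamma$ with $\Gamma$ a finite subgroup of $\mathrm{SO}(4)$ acting freely by isometries of the round $3$-sphere. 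One should note in passing that such an $M$ is automatically orientable: no element of $\mathrm{O}(4)$ of determinant $-1$ acts freely on $\mathbb{S}^3$, since in $\mathbb{R}^4$ any orthogonal map of determinant $-1$ has $+1$ as an eigenvalue and hence fixes a point of $\mathbb{S}^3$, so $\Gamma\subset\mathrm{SO}(4)$. In every case $M$ carries a Riemannian metric $h$ which is the quotient of a round sphere metric by a finite group of isometries acting freely.

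Finally I would transport the product Lorentz metric $h\oplus(-dt^2)$ on $M\times\mathbb{R}$ along the diffeomorphism $X\cong M\times\mathbb{R}$ to a Lorentz metric $\bar g$ on $X$. As $(M,h)$ is compact, hence complete, $(M\times\mathbb{R},\,h\oplus(-dt^2))$ is globally hyperbolic with the slices $M\times\{t\}$ as Cauchy surfaces. Moreover it is strongly refocusing at every event: the round sphere is a $Y^x_l$-manifold for every $x$, so, as in the examples above and in \cite[Remark 7]{CR}, every null-geodesic through $([p],t)$ passes through $([-p],t+\pi)$, which is a distinct event (its $t$-coordinate differs). Hence $(X,\bar g)$ is globally hyperbolic and strongly refocusing at every event, as required.

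I expect the only genuine obstacle to be the $n=3$ step, which really does rest on Perelman's elliptization theorem; the splitting theorem, the surface classification, the orientability observation, and the strong-refocusing computation on products of spherical space forms are all routine or already available in the paper. A careful writeup should also double-check the mild point that the diffeomorphism type of the Cauchy surface is well defined (all Cauchy surfaces of a globally hyperbolic spacetime are homeomorphic), so that ``$\pi_1(M)$ finite'' is unambiguous.
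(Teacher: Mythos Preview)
Your proposal is correct and follows essentially the same approach as the paper: both arguments use Low's compactness theorem and the Chernov--Rudyak finite fundamental group theorem to constrain the Cauchy surface, then invoke the surface classification in dimension $3$ and Perelman's elliptization in dimension $4$, and finally equip $M\times\mathbb{R}$ with the product of a spherical metric and $-dt^2$. The only minor difference is that the paper, in the $4$-dimensional case, first passes to the universal cover and applies the Poincar\'e conjecture before invoking elliptization on $M$ itself, whereas you apply elliptization directly; your route is slightly more streamlined but not substantively different.
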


\begin{proof}
If $(X^3,g)$ is a globally hyperbolic refocusing space-time, we have a diffeomorphism $X\cong M\times \mathbb{R}$ where $M^2$ is a Cauchy surface, which is compact by Theorem~\ref{LowCompact} \cite[Theorem 5]{LowRefocusing}, and has finite fundamental group by Theorem \ref{ChernovFinite} \cite[p. 345]{CR}.  Therefore by the classification theorem for closed surfaces, $M\cong \mathbb{S}^2$ if $M$ is orientable and $M\cong \mathbb{R}P^2$ if $M$ is not orientable.  Therefore $X\cong \mathbb{S}^2\times \mathbb{R}$ or $X\cong \mathbb{R}P^2\times \mathbb{R}$.

In either case, this diffeomorphism induces a Lorentz metric $g'$ on $X$ which is strongly refocusing at every event.  Specifically, if $M\cong \mathbb{S}^2$ then $g'$ is the metric coming from the Einstein cylinder metric $m\oplus -dt^2$ on $\mathbb{S}^2\times \mathbb{R}$ (where $m$ is the standard Riemannian metric on the unit $2$-sphere).  If $M\cong \mathbb{R}P^2$ then $g'$ is the metric coming from the Lorentz metric $m' \oplus -dt^2$ on $\mathbb{R}P^2\times \mathbb{R}$ (where $m'$ is the quotient of the unit sphere metric).

Assume $(X^4,g)$ is globally hyperbolic and refocusing.  Let $p: X_1\rightarrow X$ be a universal covering map, and equip $X_1$ with the induced Lorentz metric $g_1$ coming from $g$ via the local diffeomorphism $p$.  Then $M_1=p^{-1}(M)$ is a Cauchy surface in $X_1$, where $M$ is a Cauchy surface of $X$, so $(X_1,g_1)$ is globally hyperbolic.  Also $(X_1,g_1)$ is refocusing, see Theorem~\ref{ChernovFinite} \cite[p. 345]{CR}.

Therefore $M_1^3$ is compact by Theorem~\ref{LowCompact},  \cite[Theorem 5]{LowRefocusing} and also simply connected, so $M_1$ is diffeomorphic to $\mathbb{S}^3$ by the Poincar\'e conjecture, proved by Perelman in \cite{Perelman1}, \cite{Perelman2} and \cite{Perelman3}.  Therefore $X_1$ is diffeomorphic to $\mathbb{S}^3\times \mathbb{R}$, so $X_1$ admits a metric which is strongly refocusing at every event, namely the metric induced from $m\oplus -dt^2$ by the diffeomorphism, where $m$ is the standard Riemannian metric on the unit sphere $\mathbb{S}^3\subset \mathbb{R}^4$.

Now because $M$ is compact and has finite fundamental group by Theorem~\ref{ChernovFinite}  \cite[p. 345]{CR}, the Thurston Elliptization conjecture \cite{Perelman1}, \cite{Perelman2}, \cite{Perelman3} guarantees that $M$ is \emph{spherical}, i.e. a quotient of $\mathbb{S}^3\subset \mathbb{R}^4$ by a finite subgroup of the special orthogonal group $SO(4)$.  Therefore the group of deck transformations of the covering map $p: M_1\to M$ is identified with a finite subgroup of $SO(4)$.  Thus $M$ is equipped with a quotient metric $\bar{m}$ of the round sphere metric.  Now $(M\times \mathbb{R},\bar{m}\oplus -dt^2)$ is strongly refocusing at all events.
\end{proof}

\begin{remark}
Suppose that $(X^{n+1},g)$ is a globally hyperbolic space-time which is refocusing at $x\in X$.  By definition (ii.) of refocusing, there is an open neighborhood $V$ of $x$ such that for all \emph{smaller} open neighborhoods $U$ of $x$ ($U\subset V$), there exists an event $y\notin V$ such that all null-geodesics through $y$ enter $U$.  Let $U_n$ be a countable decreasing neighborhood base at $x$.

We then have a sequence of points in $\{y_n\}_{n=1}^\infty$ in $X\setminus V$ such that all null-geodesics through $y_n$ enter $U_n$.  Since $X$ is assumed to be globally hyperbolic, $X$ is diffeomorphic with $M\times \mathbb{R}$ where each $M\times \{t\}$ is a smooth spacelike Cauchy surface.  For convenience, write $y_n=(\hat{y}_n,t_n)$ and $x=(\hat{x},t)$ in $M\times \mathbb{R}$.  By Theorem~\ref{LowCompact},  \cite[Theorem 5]{LowRefocusing} the globally hyperbolic refocusing space-time $X$ has a compact Cauchy surface $M$.  Therefore, the sequence $\{\hat{y}_n\}_{n=1}^\infty$ has a convergent subsequence $\hat{y}_{n_k}\rightarrow \hat{y}\in M$.  However, this does not guarantee that $\{(\hat{y}_n,t_n)\}_{n=1}^\infty$ has a convergent subsequence in $M\times \mathbb{R}$ because it is not clear that the sequence $\{t_n\}_{n=1}^\infty$ in $\mathbb{R}$ is bounded.

Suppose, for now, that the sequence $\{t_n\}_{n=1}^\infty$ is bounded, so that there exists $C\in \mathbb{R}$ such that $|t_n|\leq C$ for all $n$.  Then $M\times [-C,C]$ is compact as a product of compact sets.  Therefore, $M\times [-C,C]\setminus V$ is compact as well since it is a closed subset of a compact set.  In this case, the sequence $\{(\hat{y}_n,t_n)\}_{n=1}^\infty$ in $M\times [-C,C]\setminus V$ has a convergent subsequence $(\hat{y}_{n_k},t_{n_k})\rightarrow (\hat{y},s)\in M\times [-C,C]\setminus V$.

Note that $(\hat{y},s)\neq (\hat{x},t)$ because $(\hat{y},s)\notin V$.  We claim that all null-geodesics through $(\hat{y},s)$ pass through $(\hat{x},t)$.  Without loss of generality we may assume all points of $(\hat{y}_{n_k},t_{n_k})$ lie in a geodesically convex neighborhood of $(\hat{y},s)$.  Let $\mathbf{v}\in T_{(\hat{y},s)}X$ be a null vector.  Parallel transport $\mathbf{v}$ to all points  $(\hat{y}_{n_k},t_{n_k})$ along the unique geodesic connecting $(\hat{y},s)$ to $(\hat{y}_{n_k},t_{n_k})$ to obtain a sequence of null vectors $\{\mathbf{v_{n_k}}\}_{k=1}^\infty$ in $T_{(\hat{y}_{n_k},t_{n_k})}X$.  Then the sequence $\{((\hat{y}_{n_k},t_{n_k}),\mathbf{v_{n_k}})\}_{n=1}^\infty$ converges to $((\hat{y},s),\mathbf{v})$ in $TX$.  Consider the map $\phi: FNX\to M$ that assigns to each future-directed null vector $\mathbf{v}$ the unique point of intersection of the null-geodesic $\gamma$  with velocity vector $\mathbf{v}$ with the Cauchy surface $M\times \{t\}$.  The continuity of $\phi$ follows from that of the exponential map. Now $((\hat{y}_{n_k},t_{n_k}),\mathbf{v_{n_k}})\rightarrow ((\hat{y},s),\mathbf{v})$ in $FNX$ so $\phi((\hat{y}_{n_k},t_{n_k}),\mathbf{v_{n_k}})\rightarrow \phi((\hat{y},s),\mathbf{v})$.  In light of \cite[Lemma 7.2]{CR} Lemma~\ref{7.2CR} it follows that all the null-geodesics through $(\hat{y},s)$ pass through $(\hat{x},t)$, so that $X$ is strongly refocusing.

However it is not known to us if the sequence of times $\{t_n\}_{n=1}^\infty$ is bounded.  Therefore we do not know if each globally hyperbolic refocusing space-time is strongly refocusing.
\end{remark}

\section{Appendix: Lorentz Geometry}

 \textbf{Appendix A: Introduction to Lorentz Geometry}\\
 
 Lorentz geometry provides a natural setting for the theory of general relativity in which space-time is represented by a Lorentz manifold.  This section introduces Lorentz geometry and defines null (lightlike) geodesics, which represent light rays in space-time.  
 
A Riemannian metric is positive-definite by definition.  This assumption can be replaced by the weaker condition of non-degeneracy, yielding a larger class of metrics called semi-Riemannian metrics.

\begin{definition}
A \emph{semi-Riemannian metric} on a smooth manifold $X$ is a non-degenerate symmetric smooth $(2,0)$-tensor field $g$ on $X$.  Thus $g$ assigns to $p\in X$ a smoothly varying choice of non-degenerate symmetric bilinear form $g_p$ on the tangent space $T_pX$ of $X$ at $p$.  A \emph{semi-Riemannian manifold} $(X,g)$ is a smooth manifold $X$ equipped with a semi-Riemannian metric $g$.
\end{definition}

\begin{definition}
An \emph{isometry} of semi-Riemannian manifolds $(X,g)$ and $(Y,h)$ is a diffeomorphism $\phi: X\to Y$ which preserves the metric, i.e.~$\phi^*h=g$, where $\phi^*h$ denotes the pullback of $h$ by $\phi$.
\end{definition}

\begin{remark}
Semi-Riemannian Geometry automatically subsumes Riemannian geometry.  This is the case because a Riemannian metric is a semi-Riemannian metric of index $0$.
\end{remark}

\begin{definition}
A \emph{Lorentz metric} is a semi-Riemannian metric of index $1$.  A \emph{Lorentz manifold} is a smooth manifold equipped with a Lorentz metric.  \emph{Lorentz geometry} is the study of properties of Lorentz manifolds which are invariant under isometries.

A non-zero tangent vector $\mathbf{v}\in T_pX$ is called \emph{spacelike} (respectively \emph{lightlike}, \emph{timelike}) if $g_p(\mathbf{v},\mathbf{v})$ is positive (respectively zero, negative).  The classification of $\mathbf{v}$ into one of these three categories is called the \emph{causal character} of $\mathbf{v}$.  Note that the latter two cases are impossible in Riemannian geometry, where the metric is positive-definite.  A synonym for lightlike is \emph{null}.  We also use the term \emph{nonspacelike} to mean not spacelike, i.e.~timelike or null. A synonym for nonspacelike is \emph{causal}.

These definitions generalize to piecewise-smooth curves and submanifolds.  A piecewise-smooth curve $\gamma$ is called \emph{spacelike} (respectively, \emph{lightlike} or \emph{timelike}) if $\mathbf{\gamma'(t)}\in T_{\gamma(t)}X$ has the corresponding property for all $t$ in the domain of $\gamma$ (except at the finite number of points where $\gamma$ fails to be smooth, at these points the one sided derivatives should satisfy the desired property).  A smooth submanifold $M$ of $X$ is called \emph{spacelike} if $g|_{M}$ is positive-definite, i.e. a Riemannian metric.

The \emph{(tangent) null-cone} of $p\in X$ is defined as the subset $N_p=\{\mathbf{v}\in T_pX: \mathbf{v}$ is lightlike$\}\subset T_pX$.  $N_p$ is disconnected and consists of two connected components which are hemi-cones.
\end{definition}

The exponential map is defined for Lorentz manifolds with respect to the unique Levi-Civita connection defined in Appendix A.  Thus we also consider the null-cone of a point as the exponential map, whose image is a subset of the manifold itself.  For distinction, the exponential map or its image is sometimes called the \emph{exponentiated} null-cone of $p$, while the null-cone in $T_pX$ is called the {tangent} null-cone.  Henceforth, 'null-cone' will sometimes refer to the 'exponentiated null-cone.  The exponentiated null-cone of a point $p\in X$ is a subset of $X$ but not necessarily a submanifold of $X$ because it may have self-intersection points, singular points and it can even be dense in $X$.

\begin{example}
The standard example of a Lorentz manifold is the \emph{Minkowski space} $\mathbb{R}^{n+1}$ with global coordinates $(x_1,...,x_n,t)$ and Lorentz metric $g=dx_1^2+...+dx_n^2-dt^2$, called the \emph{Minkowski metric}.  Minkowski space of dimension $n+1$ is denoted by $\mathbb{L}^{n+1}$ to indicate that it is a Lorentz manifold.
\end{example}

\begin{definition}
A \emph{time-orientation} of a Lorentz manifold $(X,g)$ is a continuously varying choice for each $p\in X$ of hemi-cone of $N_p$.  A nonspacelike tangent vector $\mathbf{v}$ at $p$ is called \emph{future-directed} if it lies in the hemi-cone specified by the time-orientation, and \emph{past-directed} if it lies in the opposite hemi-cone of $N_p$. A piecewise-smooth nonspacelike curve is called 
\emph{future-directed}
if all of its velocity vectors are future-directed. Similarly one defines \emph{past-directed} nonspacelike curves.

A Lorentz manifold $(X,g)$ is \emph{time-orientable} if it admits a time-orientation, and \emph{time-oriented} if it is equipped with a choice of time-orientation.
\end{definition}

If a Lorentz manifold is not time-orientable, it will always have a time-orientable double cover, see \cite[p. 51]{BEE}.

The standard time-orientation of Minkowski space is given by the constant vector field $\mathcal{Y}=\mathbf{\partial/\partial t}=(0,...,0,1)$.  With this time-orientation $(0,...,0,1)$ is future-directed and $(0,...,0,-1)$ is past-directed.

\begin{definition}
A \emph{space-time} is a connected time-oriented Lorentz manifold.  A point in a space-time is called an \emph{event}.
\end{definition}

\begin{example}
The \emph{Einstein Cylinder} is the space-time $(\mathbb{S}^n\times \mathbb{R}, g=m\oplus -dt^2)$ where $m$ is the standard (round) metric on the sphere.  The time-orientation is given by $Y=\mathbf{\partial/\partial t}$.
\end{example}

\begin{remark}
To avoid trivial cases we generally assume that the dimension of a space-time is at least two.  Without this assumption, the simplest example of a space-time is one-dimensional Minkowski space-time ($n=0$), namely $(\mathbb{R},-dt^2)$.  This is just the real line with a dot product defined by the negative of multiplication.

The Fundamental Lemma of (Semi-)Riemannian geometry states that every Lorentz manifold has a canonical Levi-Civita connection.  Curvature, geodesics and parallel transport in a space-time are defined with respect to the Levi-Civita connection of the Lorentz metric.

A nonspacelike smooth curve $\gamma$ in $X$ must be future-directed at all points or past-directed at all points.  For a geodesic curve $\gamma$ in $X$, $g_{\gamma(t)}(\mathbf{\gamma'(t)},\mathbf{\gamma'(t)})$ must be constant.  Therefore a geodesic which is spacelike (respectively timelike, null) at a point must be so everywhere.
\end{remark}

\textbf{Appendix B: Causal Structure of space-time}

\begin{definition}
Define the relations $<$ and $\leq$ on a space-time $(X,g)$ by $p< q$ if there exists a piecewise-smooth future-directed timelike curve in $X$ from $p$ to $q$ and $p\leq q$ if either $p=q$ or there exists a future-directed nonspacelike curve in $X$ from $p$ to $q$.  Clearly $p<q \Rightarrow p\leq q$ since every timelike curve is nonspacelike.

The \emph{chronological future} of $p\in X$ is $I^+(p)=\{q\in X: p<q\}\subset X$.  The \emph{chronological past} of $p$ is $I^-(p)=\{q\in X: q<p\}$.  The \emph{causal future} of $p$ is $J^+(p)=\{q\in X: p\leq q\}$ and the \emph{causal past} of $p$ is $J^-(p)=\{q\in X: q\leq p\}$  We have $q\in I^\pm(p)$ iff $p\in I^\mp(q)$.  A similar statement holds for the causal future and past.  Also $I^\pm(p)\subset J^\pm(p)$.
\end{definition}

For any event $p$, its chronological past and future $I^\pm (p)$ are open subsets of $X$.  On the other hand, the causal past and future $J^\pm (p)$ are not necessarily closed.  For example, consider the two-dimensional Minkowski space $\mathbb{R}^2$ with coordinates $(x, t)$ and metric $m=dx^2-dt^2$.  Put $(X,g)$ to be $\mathbb{R}^2\setminus (1,1)$ and $g$ to be the restriction of $m$ to $X$. Then $J^+(0,0)$ is not a closed subset of $X$.

Two events $p,q$ in a space-time are \emph{causally related} if $q\in J^+(p)$ or if $p\in J^+(q)$.  

\begin{definition}
A space-time is called \emph{chronological} if there are no closed timelike curves.  Equivalently, there does not exist an event $p\in X$ such that $p\in I^+(p).$  Similarly, a space-time is called \emph{causal} if there are no closed nonspacelike curves.  Equivalently, there do not exist distinct events $p,q\in X$ such that $p\leq q\leq p$.
\end{definition}

Every causal space-time is chronological, since timelike curves are nonspacelike, however the converse is false.

\begin{definition}
Let $(X,g)$ be a space-time.  An open subset $U\subset X$ is \emph{causally convex} if no nonspacelike curve leaving it ever returns.  In other words, no nonspacelike curve in $X$ intersects $U$ in a disconnected set.

A space-time $(X,g)$ is \emph{strongly causal at} an event $p\in X$ if $p$ has arbitrarily small causally convex neighborhoods.  $(X,g)$ is called \emph{strongly causal} if it is strongly causal at all events.
\end{definition}

\begin{remark}
 By \emph{arbitrarily small} we mean that for any open neighborhood $U$ of $p$ there exists an open neighborhood $V$ of $p$ contained in $U$ which is causally convex.  Strong causality \emph{at an event} is a local property, while strong causality is a global property of the space-time.
\end{remark}

Strong causality implies causality, while the converse is false, i.e. there are causal space-times which are not strongly causal.

\begin{definition}
The \emph{Alexandrov topology} of a space-time is defined by taking as a base all sets of the form $I^+(p)\cap I^-(q)$, $p,q\in X$.
\end{definition}

The Alexandrov topology is in general weaker than the manifold topology on any space-time because $I^+(p)\subset X$ is an open for all $p\in X$.  The following theorem asserts that strong causality is characterized by the fact that the Alexandrov and manifold topologies coincide.

\begin{theorem}\label{KP}
Kronheimer-Penrose, \cite[Theorem 4.24]{Penrose}, see also \cite[page 7]{BEE}.  For any space-time $X$ the following are equivalent:\\
\textbf{(i.)} $X$ is strongly causal.\\
\textbf{(ii.)} The Alexandrov and manifold topologies on $X$ coincide.\\
\textbf{(iii.)} The Alexandrov topology is Hausdorff.
\end{theorem}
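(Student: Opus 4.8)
The plan is to prove the cycle (i)$\Rightarrow$(ii)$\Rightarrow$(iii)$\Rightarrow$(i). The implication (ii)$\Rightarrow$(iii) is free, since a smooth manifold is Hausdorff, hence so is any topology that coincides with its manifold topology. Throughout one keeps in mind that each $I^{\pm}(p)$ is open in the manifold topology, so every Alexandrov basic set $I^{+}(a)\cap I^{-}(b)$ is manifold-open; thus the Alexandrov topology is always coarser than the manifold topology, and the real content of (i)$\Rightarrow$(ii) is the reverse inclusion.

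For (i)$\Rightarrow$(ii) I would fix $p\in X$ and a manifold neighborhood $U\ni p$. Strong causality lets me invoke Proposition~\ref{mainprop1} to get a causally convex, convex normal neighborhood $V$ of $p$ with $V\subset U$. Working inside the convex normal neighborhood $V$, choose $q$ with $q\ll p$ and $r$ with $p\ll r$ via timelike curves lying in $V$, so that $p\in I^{+}(q)\cap I^{-}(r)$. Then $I^{+}(q)\cap I^{-}(r)\subset V$: any $z$ in this set has $q\ll z\ll r$, so splicing a future timelike curve from $q$ to $z$ onto one from $z$ to $r$ gives a causal curve from $q$ to $r$ with both endpoints in $V$, which by causal convexity of $V$ never leaves $V$; hence $z\in V$. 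So $I^{+}(q)\cap I^{-}(r)$ is an Alexandrov basic set with $p\in I^{+}(q)\cap I^{-}(r)\subset U$, which shows that the Alexandrov basic sets already form a neighborhood basis at $p$ refining the manifold topology. Since $p$ is arbitrary, every manifold-open set is Alexandrov-open, and the two topologies coincide.

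For (iii)$\Rightarrow$(i) I would argue by contraposition: assuming $X$ fails to be strongly causal at some $p$, I produce an event $z\neq p$ that cannot be separated from $p$ in the Alexandrov topology. Using local compactness, fix a neighborhood $U_{0}$ of $p$ with compact closure. Failure of strong causality at $p$ yields, for a decreasing neighborhood basis $\{V_{n}\}$ at $p$ contained in $U_{0}$, future-directed causal curves $\lambda_{n}$ whose initial and final points $x_{n},y_{n}$ both lie in $V_{n}$ (hence $x_{n},y_{n}\to p$) but which leave $U_{0}$; let $z_{n}\in\partial U_{0}$ be a first exit point of $\lambda_{n}$, so $x_{n}\leq z_{n}\leq y_{n}$ along $\lambda_{n}$. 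Compactness of $\partial U_{0}$ gives a subsequence with $z_{n}\to z\in\partial U_{0}$, and $z\neq p$ because $p$ lies in the interior of $U_{0}$.

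Now, given any Alexandrov basic neighborhood $I^{+}(a)\cap I^{-}(b)$ of $p$, for large $n$ we have $x_{n},y_{n}\in V_{n}\subset I^{+}(a)\cap I^{-}(b)$, hence $a\ll x_{n}$ and $y_{n}\ll b$; combining with $x_{n}\leq z_{n}\leq y_{n}$ gives $a\ll z_{n}\ll b$, so $z_{n}\in I^{+}(a)\cap I^{-}(b)$ for large $n$ and therefore $z\in\overline{I^{+}(a)\cap I^{-}(b)}$. It follows that every Alexandrov-open set containing $p$ has $z$ in its closure, hence meets every Alexandrov-open set containing $z$, so $p$ and $z$ cannot be separated. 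The main obstacle is the first move in this last step: extracting from the bare negation of ``$p$ has arbitrarily small causally convex neighborhoods'' the stronger fact that causal curves leave a \emph{fixed} neighborhood $U_{0}$ while their endpoints return arbitrarily close to $p$. This is the genuinely delicate part of the Kronheimer--Penrose argument, and is where a limit-curve/compactness argument on $\overline{U_{0}}$ enters; the rest, as sketched, is bookkeeping with the chronology relations.
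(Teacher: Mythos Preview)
The paper does not give its own proof of this theorem: it is stated in the appendix purely as a citation to Penrose \cite[Theorem 4.24]{Penrose} and \cite[p.~7]{BEE}, and is used elsewhere in the paper as a black box. So there is no in-paper argument to compare your proposal against.

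As for the proposal itself: your cycle (i)$\Rightarrow$(ii)$\Rightarrow$(iii)$\Rightarrow$(i) is the standard one, and the arguments for (i)$\Rightarrow$(ii) and (ii)$\Rightarrow$(iii) are fine. One small remark on (i)$\Rightarrow$(ii): invoking Proposition~\ref{mainprop1} is more than you need, since the definition of strong causality already hands you arbitrarily small causally convex neighborhoods $V$; you only need, in addition, that $V$ can be taken small enough to contain a timelike curve through $p$ so that suitable $q,r\in V$ with $q\ll p\ll r$ exist. That is immediate from the existence of convex normal neighborhoods and does not require the full force of Proposition~\ref{mainprop1}.

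For (iii)$\Rightarrow$(i) you correctly identify the real issue and then leave it open. The failure of strong causality at $p$, as stated, only says that some causally convex neighborhood is not contained in a given $U$; producing causal curves with endpoints in $V_n\to\{p\}$ that actually exit a \emph{fixed} precompact $U_0$ is the substantive step, and it does require a limit-curve argument (or the equivalent causal-continuity machinery in Penrose or BEE). Since you flag this explicitly rather than glossing over it, the proposal is honest, but as written it is a sketch rather than a complete proof of the hard implication.
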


\begin{definition}
A space-time $(X,g)$ is \emph{globally hyperbolic} if it is strongly causal and if for all $p,q\in X$, the subset $J^+(p)\cap J^-(q)\subset X$ is compact.
\end{definition}

\begin{remark}
Bernal and S\'anchez recently proved that an equivalent definition of global hyperbolicity is obtained by removing the word 'strongly' (above) since causality together with the compactness of the diamonds 
$J^+(p)\cap J^-(q)$  condition imply strong causality, see \cite[Theorem 3.2]{BernalSanchezCausal}.
\end{remark}

\begin{definition}
A \emph{Cauchy surface} in a space-time $(X,g)$ is a subset $M\subset X$ such that for any inextendible nonspacelike curve $\gamma: I\to X$, there exists a unique $t\in I$ such that $\gamma(t)\in M$.  A Cauchy surface is said to be \emph{smooth} if $M$ is a smooth submanifold of $X$.  A Cauchy surface (not assumed smooth) is also called a \emph{topological} Cauchy surface to emphasize the distinction.
\end{definition}

Globally hyperbolic space-times are characterized by the existence of a Cauchy surface, and they have a very special topological type.

\begin{theorem}
A space-time $(X^{n+1},g)$ is globally hyperbolic if and only if it admits a Cauchy surface, 
see~\cite[pages 211-212]{HawkingEllis}.  
Geroch's Theorem  is that for a globally hyperbolic $(X^{n+1},g)$ there exists a homeomorphism 
$X\cong M\times \mathbb{R}$, where $M$ is a topological $n$-manifold and each $M\times \{t\}$ is a 
topological Cauchy surface, see \cite[Theorem 3.17]{BEE}.
\end{theorem}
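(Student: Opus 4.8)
The plan is to prove the equivalence together with Geroch's splitting. The direction ``admits a Cauchy surface $\Rightarrow$ globally hyperbolic'' is the classical, easier half: a topological Cauchy surface $M$ is acausal, since two distinct causally related points of $M$ would lie on an inextendible causal curve meeting $M$ twice; every event lies on an inextendible causal curve and hence in the Cauchy development $D(M)$, so $X=D(M)$; and the development of a closed acausal set is strongly causal with compact causal diamonds by the standard theory of Cauchy developments, see \cite{HawkingEllis}. I will concentrate on the converse, which is the substance of Geroch's theorem, and on the splitting.

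For the converse, fix a volume form on $X$ and a smooth positive density so that the associated measure $\mu$ has $\mu(X)=1$; in particular $\mu$ is absolutely continuous with respect to Lebesgue measure in every chart and is positive on nonempty open sets. Put $t^{-}(p)=\mu(J^{-}(p))$, $t^{+}(p)=\mu(J^{+}(p))$, and $\tau=\log(t^{-}/t^{+}):X\to\mathbb{R}$, which is well defined since $t^{\pm}(p)\geq\mu(I^{\pm}(p))>0$. Granting for the moment that $t^{\pm}$ are continuous, one checks: (a) $\tau$ strictly increases along every future-directed causal curve, because $p\leq q$ with $p\neq q$ makes $I^{+}(q)$ a proper subset of the open set $I^{+}(p)$ and $I^{-}(p)$ a proper subset of $I^{-}(q)$, so each set difference has positive $\mu$-measure; (b) along a future-inextendible causal curve $\gamma$ one has $\tau(\gamma(s))\to+\infty$, since the sets $J^{+}(\gamma(s))$ decrease in $s$ with empty intersection --- were some event $r$ in every $J^{+}(\gamma(s))$, the curve $\gamma$ would be imprisoned in the compact diamond $J^{+}(\gamma(0))\cap J^{-}(r)$, contradicting strong causality --- so $t^{+}(\gamma(s))\to 0$ by continuity of $\mu$ from above; symmetrically $\tau\to-\infty$ at a past-inextendible end. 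Hence each level set $M_{c}=\tau^{-1}(c)$ is closed and acausal and is met by every inextendible causal curve exactly once --- once by (a), at least once by (b) and the intermediate value theorem --- so $M_{c}$ is a topological Cauchy surface, and, being acausal and edgeless, a topological $n$-manifold, see \cite{Penrose}. This proves ``globally hyperbolic $\Rightarrow$ admits a Cauchy surface''.

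For the splitting, choose a complete future-directed timelike vector field $V$ on $X$ (any timelike field becomes complete after a conformal rescaling). Its maximal integral curves are inextendible timelike curves, hence each meets every level set $M_{c}$ exactly once; set $M=M_{0}$ and define $\Phi:M\times\mathbb{R}\to X$ by letting $\Phi(x,c)$ be the unique point of $\tau$-value $c$ on the integral curve of $V$ through $x$. Then $\Phi$ is a bijection, and its inverse $\Psi(p)=(q_{p},\tau(p))$, where $q_{p}$ is the unique point at which the integral curve of $V$ through $p$ meets $M$, is continuous because $\tau$ and the flow of $V$ are continuous. Since $M\times\mathbb{R}$ and $X$ are topological manifolds of the same dimension $n+1$, invariance of domain makes $\Psi$, and hence $\Phi$, a homeomorphism; by construction it carries $M\times\{c\}$ onto the Cauchy surface $M_{c}$.

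The main obstacle is the continuity of $t^{\pm}$, equivalently of $\tau$; this is precisely where global hyperbolicity --- compactness of the diamonds together with strong causality --- is indispensable. Consider $t^{-}$, the case of $t^{+}$ being identical. Lower semicontinuity is soft: for $p_{n}\to p$ one has $I^{-}(p)\subset\liminf_{n}I^{-}(p_{n})$ since $I^{-}$ is open, whence $\liminf_{n}\mu(J^{-}(p_{n}))\geq\mu(I^{-}(p))=\mu(J^{-}(p))$, the last equality holding because $J^{-}(p)\subset\overline{I^{-}(p)}$ while $\partial I^{-}(p)$ is a locally Lipschitz, hence $\mu$-null, hypersurface. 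Upper semicontinuity uses the limit curve theorem in globally hyperbolic form: if $p_{n}\to p$ and $r_{n}\in J^{-}(p_{n})$ with $r_{n}\to r$, then fixing $r'\in I^{-}(r)$ and $p'\in I^{+}(p)$ we have, for all large $n$, causal curves from $r_{n}$ to $p_{n}$ lying in the compact diamond $J^{+}(r')\cap J^{-}(p')$, so these curves subconverge to a causal curve from $r$ to $p$; thus the relation $J^{-}$ has closed graph and $\limsup_{n}\mathbf{1}_{J^{-}(p_{n})}\leq\mathbf{1}_{J^{-}(p)}$ pointwise, so the reverse Fatou lemma (applicable since $\mu$ is finite) gives $\limsup_{n}\mu(J^{-}(p_{n}))\leq\mu(J^{-}(p))$. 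Combining the two semicontinuities yields continuity of $t^{-}$, after which everything above is routine.
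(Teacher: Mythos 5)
The paper does not actually prove this statement---it is quoted as a classical theorem with pointers to Hawking--Ellis and to Geroch's theorem as presented in Beem--Ehrlich--Easley---and your argument is precisely the standard Geroch volume-function proof contained in those sources (finite measure $\mu$, time function $\tau=\log\bigl(\mu(J^-(p))/\mu(J^+(p))\bigr)$, level sets as Cauchy surfaces, splitting along a complete timelike flow), so it is essentially the same approach and is correct modulo the standard facts you cite. The one step I would tighten is the strict monotonicity of $\tau$: properness of the inclusion $I^-(p)\subset I^-(q)$ alone does not force the difference to have positive measure (it could a priori lie in the null boundary $\partial I^-(p)$), but since $J^-(p)$ is closed in a globally hyperbolic space-time and $q\notin J^-(p)$ by causality, some neighborhood $U$ of $q$ misses $J^-(p)$, and then $U\cap I^-(q)$ is a nonempty open set of positive $\mu$-measure contained in $J^-(q)\setminus J^-(p)$, which gives the strict inequality.
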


Bernal and S\'anchez extended Geroch's result to the smooth category in the following theorem.

\begin{theorem}
Bernal-S\'anchez, \cite[Theorem 1]{BernalSanchez}.  A space-time $(X^{n+1},g)$ is globally hyperbolic if and only if it admits a smooth spacelike Cauchy surface.  In this case, there is a diffeomorphism $X\cong M^{n}\times \mathbb{R}$ such that each $M\times \{t\}$ a smooth spacelike Cauchy surface.  Furthermore, any two smooth spacelike Cauchy surfaces in $X$ are diffeomorphic (though not necessarily isometric as Riemannian manifolds).
\end{theorem}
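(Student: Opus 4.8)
The plan is to prove the substantive half of the statement. The implication ``admits a smooth spacelike Cauchy surface $\Rightarrow$ globally hyperbolic'' already follows from the previously quoted characterization of global hyperbolicity by the existence of a (topological) Cauchy surface, so the real work is to show that a globally hyperbolic $(X^{n+1},g)$ carries a smooth spacelike Cauchy surface, that it splits smoothly as $M^n\times\mathbb{R}$ with each slice such a surface, and that any two smooth spacelike Cauchy surfaces are diffeomorphic. I would proceed in three stages: (1) upgrade Geroch's continuous Cauchy time function to a smooth Cauchy temporal function; (2) flow along its gradient to get the product diffeomorphism; (3) flow along a complete timelike vector field to compare two given Cauchy surfaces.

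For stage (1), recall Geroch's construction: fix a finite Borel measure $\mu$ in the class of the volume form with $\mu(X)=1$, and put $t(p)=\log\bigl(\mu(I^-(p))\bigr)-\log\bigl(\mu(I^+(p))\bigr)$. Global hyperbolicity makes $t$ a continuous \emph{time function} --- finite, strictly increasing along every future-directed causal curve, and running from $-\infty$ to $+\infty$ along every inextendible one --- so each $t^{-1}(c)$ is a topological Cauchy surface and $X\cong t^{-1}(0)\times\mathbb{R}$ topologically. The real task, and the step I expect to be the main obstacle, is to replace $t$ by a \emph{smooth} $T\colon X\to\mathbb{R}$ whose differential $dT$ is timelike at \emph{every} point (so that its level sets are automatically smooth spacelike hypersurfaces) while keeping $T$ a Cauchy time function. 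Naive mollification fails: convolution can break monotonicity along causal curves and can make $\nabla T$ null or spacelike. The remedy is to work locally, covering $X$ by a locally finite family of small globally hyperbolic charts on which explicit smooth local time functions with timelike gradient are available, and then to assemble these with a partition of unity, adding correction bump functions supported on sets small enough (relative to the causal ``width'' of the charts) that the global sum is still strictly increasing along causal curves; a cone-convexity estimate in each chart keeps $\nabla T$ inside the timelike cone. One must then re-verify that $T$ still separates the two ends of every inextendible causal curve and is unbounded along it, so that its level sets remain Cauchy surfaces.

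For stage (2), given such a Cauchy temporal function $T$, set $M=T^{-1}(0)$ and rescale $\nabla T$ (or $-\nabla T$) to a smooth timelike vector field $\partial_T$ with $dT(\partial_T)\equiv 1$. Its integral curves are timelike, and since $T$ grows linearly along them and is unbounded, they are complete and cross every level set exactly once. The map $\Phi\colon M\times\mathbb{R}\to X$ sending $(q,s)$ to the point at flow-parameter $s$ on the integral curve through $q$ is then smooth, injective (one crossing per flow line), surjective (every point lies on the flow line through its own $T^{-1}(0)$), and a local diffeomorphism because $d\Phi$ is everywhere nonsingular; hence it is a diffeomorphism, and each $M\times\{s\}=T^{-1}(s)$ is a smooth spacelike Cauchy surface.

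For stage (3), let $S_1,S_2$ be smooth spacelike Cauchy surfaces and let $V=\partial_T$ be the complete timelike field just built; it is transverse to any Cauchy surface, since a Cauchy surface meets each of its integral curves exactly once. Define $h\colon S_1\to S_2$ by sending $p\in S_1$ to the unique point where the $V$-flow line through $p$ meets $S_2$; this point exists and is unique because that flow line is an inextendible causal curve. Transversality of $V$ to the smooth hypersurface $S_2$ together with smooth dependence of the flow on initial data makes $h$ smooth (implicit function theorem), and the symmetric construction provides a smooth inverse, so $h$ is a diffeomorphism. The parenthetical ``not necessarily isometric'' needs no proof --- two distinct constant-$t$ slices of a generic non-static globally hyperbolic metric already illustrate it --- and essentially the whole difficulty of the theorem is concentrated in the smoothing carried out in stage (1).
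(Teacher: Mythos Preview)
The paper does not prove this statement: it is quoted in the appendix as a result of Bernal and S\'anchez, with a citation to \cite[Theorem 1]{BernalSanchez} and no accompanying argument. There is therefore nothing in the paper to compare your proposal against.

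That said, your three-stage outline is a faithful sketch of the original Bernal--S\'anchez strategy: smooth Geroch's time function to a temporal function with past-directed timelike gradient, use the flow of the normalized gradient to obtain the diffeomorphism $X\cong M\times\mathbb{R}$ with spacelike Cauchy slices, and then flow along a complete timelike field to compare any two smooth spacelike Cauchy surfaces. You are right that essentially all the difficulty is concentrated in stage~(1), and that naive mollification fails; the actual construction in \cite{BernalSanchez} (and its companion papers) is considerably more delicate than a partition-of-unity patching of local time functions, requiring careful inductive control to ensure the assembled function has everywhere timelike gradient and remains Cauchy. Your description of stage~(1) is a plausible heuristic but would not, as written, constitute a proof: the ``cone-convexity estimate'' and the claim that the correction bumps can be made small enough are exactly the points where the real technical work lies. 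Stages~(2) and~(3) are standard once~(1) is in hand.
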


\begin{remark}
From now on we will assume that a Cauchy surface is smooth and spacelike unless we specify otherwise by calling it a topological Cauchy surface.  For a globally hyperbolic space-time $X\cong M\times \mathbb{R}$ we often put $M_t=M\times \{t\}$ for brevity and sometimes omit the $t$ in the subscript when the choice of $t$  is clear from the context and since each $M_t$ is diffeomorphic to $M$.  It must be kept in mind that these Cauchy surfaces are not isometric in general.
\end{remark}

\begin{definition}\label{CauchyDevelopment}
Let $(X,g)$ be a space-time and $S\subset X$ a closed subset.  The \emph{future Cauchy development} of $S$ is defined by $D^+(S)=\{x\in X:$ all past-inextendible nonspacelike curves containing $x$ intersect $S\}$.  Similarly, the \emph{past Cauchy development} of $S$ is defined by $D^-(S)=\{x\in X:$ all future-inextendible nonspacelike curves containing $x$ intersect $S\}$. The \emph{Cauchy development} of $S$ is defined by $D(S)=D^+(S)\cup D^-(S)$, the union of the future and past Cauchy developments of $S$.  Cauchy developments are also called \emph{domains of dependence} and they are covered extensively in Penrose \cite[Section 5]{Penrose}.
\end{definition}

\textbf{Appendix C: Skies and the Sky Space}

\begin{definition}
Let $(X^{n+1},g)$ be a space-time.  Let $\mathcal{N}$ be the topological space of all future-directed null-geodesics in $X$ modulo orientation-preserving affine reparametrizations.  Following \cite{LowRefocusing} we give a more precise definition of the topology on $\mathcal{N}$.  Define the \emph{future null bundle} $FNX$ of $X$ as the subspace of the tangent bundle $TX$ of all future-directed null vectors.  Define the \emph{spherical future null bundle} $SNX$ of $X$ as the quotient of $FNX$ by the group action of $\mathbb{R}^+$ on $FNX$. (Here $\mathbb{R}^+$ is the group of positive real numbers under multiplication.)  In other words, two future-directed null vectors are equivalent if and only if they differ by a positive scalar multiple.  $SNX$ can therefore be viewed as the space of all future null directions in $X$.  The space $\mathcal{N}$ of all (future-directed) null-geodesics in $X$ is defined as the quotient of $SNX$ by the following equivalence relation: two future null directions are equivalent if and only if they lie along a common null-geodesic.  A null-geodesic is viewed as the equivalence class of future null directions of its velocity vectors.
\end{definition}

The space $\mathcal{N}$ of null-geodesics in a strongly causal space-time $(X^{n+1},g)$ has a smooth $(2n-1)$-dimensional manifold structure aside from the fact that it may fail to be Hausdorff, see \cite[p. 2]{LowCones}.  For example, the punctured Minkowski space-time $(\mathbb{R}^{n+1}\setminus \mathbf{0},g=dx_1^2+...+dx_n^2-dt^2)$ is strongly causal.  Identifying each $T_p\mathbb{R}^{n+1}$ with $\mathbb{R}^{n+1}$, the sequence $\{\gamma_n\}_{n=1}^\infty$ of null-geodesics through $(0,...,0,1/n)$ in the direction $(1,0,...,0,1)$ converges in $\mathcal{N}$ to two distinct null-geodesics, namely the two connected components of the punctured line $t=x_1,  x_2=...=x_n=0$.  Since a sequence of null-geodesics in $\mathcal{N}$ converges to two different null-geodesics in $\mathcal{N}$, the space $\mathcal{N}$ fails to be Hausdorff.

\begin{definition}
Let $x\in X$ be an event.  Define the \emph{sky} of $x$ as the subspace $O_x\subset \mathcal{N}$ of all 
(future-directed) null-geodesics passing through $x$.
\end{definition}

\begin{definition}
Let $(X^{n+1},g)$ be a space-time.  We define the \emph{sky space} of $X$ as the topological space of all skies $SKY(X)=\{O_x: x\in X\}$.  The topology on this set is given by a neighborhood base at each sky as follows.

Let $W\subset \mathcal{N}$ be an open set containing $O_x$.  Assign to $W$ a neighborhood base element $B_W$ at $O_x$ to be the set of all skies $B$ contained in $W$.  Each choice of such a $W$ gives rise to a  single neighborhood base element $B_W$ at $O_x$.  Letting $W$ range over all such possibilities gives the entire neighborhood base at $O_x$, namely $B=\{B_W: W\subset \mathcal{N}$ open and containing $O_x\}$.
\end{definition}

\begin{definition}
Let $M$ be a smooth manifold and $T^*M$ its cotangent bundle.  If $M$ is a Riemannian manifold, for instance the Cauchy surface of a globally hyperbolic space-time, then its Riemannian metric induces an identification of the tangent bundle $TM$ with the cotangent bundle $T^*M$.  Let $\zeta$ be the zero-section of $T^*M$.  The \emph{spherical} (or \emph{unit}) \emph{cotangent bundle} of $M$ is defined as the quotient $ST^*M=(T^*M\setminus \zeta)/\mathbb{R}^+$ of $T^*M\setminus \zeta$ by the group action of $\mathbb{R}^+$ on $T^*M$ given by positive scalar multiplication.
\end{definition}

 Let $(X^{n+1},g)$ be a globally hyperbolic space-time and $\mathcal{N}$ the associated  space of null-geodesics in $X$ as defined above.  Then $\mathcal{N}$ is Hausdorff and has a smooth manifold structure.  Low~\cite{LowLegendrian, LowNullgeodesics} observed that in fact, $\mathcal{N}$ is diffeomorphic with the spherical cotangent bundle $ST^*M$ of any smooth spacelike Cauchy surface $M$ in $X$. We describe such a diffeomorphism below, following \cite[p. 313]{CR}.

Let $[\gamma_0]\in \mathcal{N}$ be the equivalence class of a null-geodesic $\gamma_0: I\to X$.  Maximally extend $\gamma_0$ to an inextendible null-geodesic $\gamma:J\to X$, $I\subset J$, with $\gamma(t)\in M$.  This must hold for some $t\in J$ by the definition of Cauchy surface.  Let $\mathbf{v}$ be the orthogonal projection of $\mathbf{\gamma'(t)}$ onto $T_{\gamma(t)}M$ (identified with a subspace of $T_{\gamma(t)}X$).  This defines an element $[\mathbf{v}]\in STM\cong ST^*M$ which is independent of the choice of representative of the equivalence class $[\gamma_0]\in \mathcal{N}$.  Note that $T_{\gamma(t)}X=T_{\gamma(t)}M\oplus T_{\gamma(t)}M^\perp$.  If $\mathbf{v}\in T_{\gamma(t)}M\cap T_{\gamma(t)}M^\perp$ then $\mathbf{v}=\mathbf{0}$.  (If $\mathbf{v}\neq \mathbf{0}$ then $\mathbf{v}$ is spacelike since $\mathbf{v} \in T_{\gamma(t)}M$.  On the other hand, $\mathbf{v}\in T_{\gamma(t)}M^\perp$ so $g_p(\mathbf{v},\mathbf{v})=0$ which means that $\mathbf{v}$ must be both null and spacelike, a contradiction.)

The sky $O_x$ of an event $x$ in a globally hyperbolic space-time is an embedded $(n-1)$-sphere in $\mathcal N=ST^*M$.  For $x\in M$ we have $O_x=\pi^{-1}(x)$, where $\pi: \mathcal N=ST^*M\to M$ is the projection map.  Furthermore, the spherical cotangent bundle $ST^*M$ has a canonical \emph{contact structure} (totally non-integrable smooth hyperplane distribution).  Thus for a globally hyperbolic $(X,g)$ we have that 
$\mathcal{N}=ST^*M$ is a \emph{contact manifold}. The contact structure on $\mathcal N$ does not depend on the choice of $M$, see Low~\cite{LowLegendrian} and Natario and Tod~\cite[pages 252-253]{NatarioTod}.  The sky $O_y$ of any event $y\in X$ is known to be a \emph{Legendrian submanifold} of $\mathcal N$ with respect to its canonical contact structure.  This means that the tangent space to the smooth submanifold $O_y$ of $\mathcal N$ at each point lies in the hyperplane distribution given by the contact structure on $\mathcal N$.\\

\textbf{Appendix D: The Exponential Map and Lorentz Distance}

\begin{definition}
The \emph{exponential map} of a semi-Riemannian manifold $X$ is defined by $exp: E\to X: \mathbf{v}\mapsto \gamma_\mathbf{v}(1)$, where $E$ is the open subset of $TX$ of all tangent vectors $\mathbf{v}$ such that the unique maximal geodesic $\gamma_\mathbf{v}: I \to X$ satisfying $\gamma'(0)=\mathbf{v}$ is defined on an open interval $I$ containing $[0,1]$.  For each $p\in X$, we define $E_p=E\cap T_pX$ (an open subset of $T_pX$).  We define the \emph{exponential map} of $X$ \emph{at} $p$ by $exp_p=exp|_{E_p}$.
\end{definition}

\begin{remark}
The Normal Neighborhood Lemma of Riemannian geometry holds for all semi-Riemannian manifolds.  This guarantees the existence of normal neighborhoods of each point of a semi-Riemannian manifold.  In particular, $(exp_p|_{V})^{-1}: U\to V$ is a coordinate chart on $X$ around $p$, where the coordinates in $V$ are defined with respect to a given basis of $T_pX$.
\end{remark}

\begin{definition}
An open neighborhood $U$ of a semi-Riemannian manifold $X$ is called \emph{convex} if any two points in $U$ can be joined by a unique geodesic segment lying entirely within $U$.
\end{definition}

\begin{theorem}\label{Whitehead}
Whitehead, see \cite[p. 130]{ONeill}.  Every point of a semi-Riemannian manifold has a normal neighborhood which is convex, and also has the property that it is a normal neighborhood of each point $q\in U$.
\end{theorem}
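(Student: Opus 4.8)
The plan is to give the classical Whitehead argument, which goes through verbatim in the semi-Riemannian setting because it never uses positivity of $g$, only the vanishing of the Christoffel symbols at the center of a normal chart. First I would introduce the map $\mathcal{E}\colon\mathcal{D}\to X\times X$, $\mathcal{E}(\mathbf{v})=(\pi(\mathbf{v}),\exp(\mathbf{v}))$, where $\pi\colon TX\to X$ is the bundle projection and $\mathcal{D}\subset TX$ is the open domain of $\exp$. In adapted coordinates $(x,v)$ on $TX$ one has $\mathcal{E}(x,v)=(x,\exp_x v)$, so $d\mathcal{E}_{0_p}$ is lower block triangular with identity diagonal blocks (using $d(\exp_p)_0=\mathrm{id}$), hence a linear isomorphism. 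By the inverse function theorem, $\mathcal{E}$ carries some open neighborhood $\mathcal{U}$ of $0_p$ in $TX$ diffeomorphically onto a neighborhood of $(p,p)$ in $X\times X$. Shrinking, I take $\mathcal{U}=\{\mathbf{v}\in TX:\pi(\mathbf{v})\in W,\ \|\mathbf{v}\|_h<r\}$ for a relatively compact $W\ni p$, some $r>0$, and an auxiliary Riemannian metric $h$; and, using continuous dependence of geodesics on initial data together with compactness of $\overline{W}\times\{\|\mathbf{v}\|_h\le r\}$, I arrange that every geodesic $t\mapsto\exp(t\mathbf{v})$, $t\in[0,1]$, with $\mathbf{v}\in\mathcal{U}$ stays inside a prescribed small neighborhood of $p$. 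The upshot: any two points near $p$ are joined by a unique geodesic segment with small initial velocity, and these segments stay near $p$.

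Next I would produce an auxiliary convexity function. Fix normal coordinates $(x^1,\dots,x^n)$ around $p$ via $\exp_p$ and set $N(q)=\sum_m(x^m(q))^2$, so $N\ge 0$, $N(p)=0$, the sublevel sets $\{N<\delta\}$ form a neighborhood basis at $p$, and $\overline{\{N<\delta\}}$ is compact for small $\delta$. For a geodesic $\gamma$ inside the chart, the geodesic equation $\ddot{x}^k=-\Gamma^k_{ij}\dot{x}^i\dot{x}^j$ gives $(N\circ\gamma)''=2\sum_m(\dot{x}^m)^2-2\sum_m x^m\,\Gamma^m_{ij}\dot{x}^i\dot{x}^j$. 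In normal coordinates $\Gamma^k_{ij}(p)=0$, since the geodesics through $p$ are the coordinate lines and the Levi-Civita connection is torsion-free; hence at $p$ this equals $2\sum_m(\dot{x}^m)^2>0$ whenever $\dot{x}\ne 0$, and by continuity there is $\epsilon>0$ such that the symmetric form $\xi\mapsto\sum_m\xi^m\xi^m-\sum_m x^m\Gamma^m_{ij}(x)\xi^i\xi^j$ is positive definite for all $x$ with $N(x)<\epsilon$. Therefore $N\circ\gamma$ is \emph{strictly convex} along every nonconstant geodesic $\gamma$ whose image lies in $\{N<\epsilon\}$.

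Finally I would assemble the two ingredients. Choose $\delta>0$ so small that $\{N<\delta\}\subset W$, that $\{N<\delta\}\times\{N<\delta\}\subset\mathcal{E}(\mathcal{U})$, and that every geodesic $t\mapsto\exp(t\mathbf{v})$ with $\mathbf{v}\in\mathcal{U}$ and $\pi(\mathbf{v})\in\{N<\delta\}$ has image in $\{N<\epsilon\}$; put $\mathcal{C}=\{N<\delta\}$. Given $q,q'\in\mathcal{C}$, there is a unique $\mathbf{v}\in\mathcal{U}$ with $\mathcal{E}(\mathbf{v})=(q,q')$, and the geodesic $\gamma(t)=\exp(t\mathbf{v})$ from $q$ to $q'$ has image in $\{N<\epsilon\}$, so $N\circ\gamma$ is convex and $N(\gamma(t))\le\max\{N(q),N(q')\}<\delta$; thus $\gamma$ stays in $\mathcal{C}$. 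Applying the same estimate to the subarcs $t\mapsto\exp(st\mathbf{v})$ shows that $\exp_q^{-1}(\mathcal{C})\cap(\mathcal{U}\cap T_qX)$ is star-shaped about $0_q$ and is carried by $\exp_q$ bijectively onto $\mathcal{C}$, hence (shrinking so $\exp_q$ is a local diffeomorphism there) diffeomorphically. So $\mathcal{C}$ is a normal neighborhood of each of its points; in particular any two of its points are joined by a geodesic segment lying in $\mathcal{C}$, and uniqueness of that segment follows from injectivity of $\mathcal{E}$ on $\mathcal{U}$, since two such segments would have the same small initial velocity. Hence $\mathcal{C}$ is convex, which proves the theorem.

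I expect the main obstacle to be the bookkeeping in this last step: choosing $\mathcal{C}$ small enough that the connecting geodesics are simultaneously controlled by the inverse-function-theorem neighborhood $\mathcal{U}$ \emph{and} confined to the region $\{N<\epsilon\}$ where the convexity estimate is valid, and then checking that the relevant preimage in $T_qX$ is genuinely star-shaped, so that $\mathcal{C}$ is a normal neighborhood of $q$ rather than merely contained in one. A secondary point deserving care is that, unlike the Riemannian case, no length or distance function is available, so the convexity must be extracted purely from the coordinate function $N$ and the vanishing of the Christoffel symbols at $p$; this is precisely why $N$, and not a distance squared, is the right tool.
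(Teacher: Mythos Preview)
The paper does not prove this theorem at all; it merely states it with a citation to O'Neill \cite[p.~130]{ONeill}. Your argument is the standard Whitehead proof exactly as it appears in O'Neill's book, and it is correct: the key ingredients---the local diffeomorphism $\mathcal{E}$, the strict convexity of the Euclidean radius-squared function $N$ in normal coordinates coming from $\Gamma^k_{ij}(p)=0$, and the sublevel-set assembly---are all in order. One small remark: you need not ``shrink so $\exp_q$ is a local diffeomorphism there''; this already follows from $\mathcal{E}$ being a diffeomorphism on $\mathcal{U}$, since the differential of $\mathcal{E}$ restricted to the fiber $T_qX$ is exactly $d(\exp_q)$.
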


\begin{definition} A neighborhood $U$ of $p$ as above is called a \emph{convex normal neighborhood} of $p$.
\end{definition}

The following useful proposition regards the local behavior of timelike and nonspacelike curves lying entirely within a convex normal neighborhood of a point in a Lorentz manifold.

\begin{prop}\label{3.4BEE}
See \cite[Proposition 3.4]{BEE}.  Let $(X,g)$ be a Lorentz manifold, $p\in X$ and $U$ a convex normal neighborhood of $p$.  The points of $U$ which can be reached by timelike (respectively, nonspacelike) curves contained in $U$ are those of the form $exp_p(\mathbf{v})$ where $\mathbf{v}\in T_pX$ is timelike (respectively, nonspacelike).
\end{prop}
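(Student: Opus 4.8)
The plan is to establish the two inclusions separately. For ``$\supseteq$'', recall that a convex normal neighborhood $U$ of $p$ is the diffeomorphic image under $\exp_p$ of an open set $\widetilde U\subset T_pX$ which is star-shaped about the origin; hence if $\mathbf v\in\widetilde U$ is timelike (respectively nonspacelike) then the radial geodesic $t\mapsto\exp_p(t\mathbf v)$, $t\in[0,1]$, lies entirely in $U$, and since $g(\gamma',\gamma')$ is constant along any geodesic $\gamma$ (Appendix~A) this is a timelike (respectively nonspacelike, being a nonzero causal geodesic) curve in $U$ from $p$ to $\exp_p(\mathbf v)$; the case $\mathbf v=0$ gives $p$ itself. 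So the content is the inclusion ``$\subseteq$''.

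Consider first the timelike case. Let $\alpha\colon[0,1]\to U$ be a future-directed timelike curve (replace $\alpha$ by its reverse if necessary) with $\alpha(0)=p$ and $\alpha(1)=q$, and write $q=\exp_p(\mathbf v)$; we must show $\mathbf v$ is timelike. Using that $\exp_p\colon\widetilde U\to U$ is a diffeomorphism, write $\alpha(s)=\exp_p(\beta(s))$ with $\beta\colon[0,1]\to\widetilde U$ smooth, $\beta(0)=0$, $\beta(1)=\mathbf v$, and form the geodesic variation $x(s,t)=\exp_p(t\beta(s))$ and the function $f(s)=g_p(\beta(s),\beta(s))$. Each $t\mapsto x(s,t)$ is a radial geodesic, so $V(s):=\partial_t x(s,1)$ satisfies $g(V(s),V(s))=f(s)$, while $s\mapsto\partial_s x(s,t)$ is a Jacobi field along it vanishing at $t=0$; since $\partial_t x(s,0)=\beta(s)$, the Gauss Lemma for such Jacobi fields gives $g(V(s),\alpha'(s))=\tfrac12 f'(s)$. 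Because $d(\exp_p)_0=\mathrm{id}$ we have $\beta'(0)=\alpha'(0)$, which is future timelike, so $f(s)<0$ and $V(s)$ is future timelike for small $s>0$. If $f(1)\ge0$, let $s_0$ be the least zero of $f$ in $(0,1]$; then $f<0$ on $(0,s_0)$, so $V$ is a nowhere-zero causal field there and hence future-directed on all of $(0,s_0]$ by continuity, with $V(s_0)$ future null and $\alpha'(s_0)$ future timelike. Since a future null vector pairs negatively against a future timelike vector, $f'(s_0)=2g(V(s_0),\alpha'(s_0))<0$, contradicting $f'(s_0)\ge0$ (forced by $f<0=f(s_0)$ just to the left). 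Hence $f(1)<0$ and $\mathbf v$ is timelike.

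For the nonspacelike case I would reduce to the timelike case rather than repeat this analysis. Regard $(U,g|_U)$ as a space-time. The timelike case just proved identifies $I^+_U(p)\cup I^-_U(p)$ with $\exp_p(\mathcal T)$, where $\mathcal T\subset\widetilde U$ is the open set of timelike vectors, and $\exp_p$ is a homeomorphism of $\widetilde U$ onto $U$. If $q$ is joined to $p$ by a future-directed nonspacelike curve in $U$ then $q\in J^+_U(p)\subset\overline{I^+_U(p)}$, by the standard fact that the causal future lies in the closure of the chronological future; applying $\exp_p^{-1}$ and noting that the closure of $\mathcal T$ in $\widetilde U$ consists of the causal vectors together with $0$, we get that $\exp_p^{-1}(q)$ is nonspacelike, or $0$ when $q=p$. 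The past-directed case is symmetric.

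The step I expect to be the main obstacle is the bookkeeping inside the timelike argument: verifying that the least zero $s_0$ is well defined and strictly positive (which uses the Taylor behavior of $f$ near $s=0$ coming from $\beta'(0)=\alpha'(0)$) and that $V$ really stays future-directed along $(0,s_0]$, together with checking that the Gauss-Lemma identities are applied to the correct Jacobi fields. The remaining ingredients---the constancy of the causal character of a geodesic, the inner-product inequality for future causal vectors, and the inclusion $J^+\subset\overline{I^+}$---are routine.
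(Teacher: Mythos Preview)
The paper does not give its own proof of this proposition: it is quoted in Appendix~D with the attribution ``See \cite[Proposition 3.4]{BEE}'' and used as an imported black box in the body of the paper. So there is no argument in the paper to compare yours against.

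That said, your argument is the standard one and is correct. The Gauss-Lemma computation you outline is exactly the textbook route (O'Neill, BEE): with $x(s,t)=\exp_p(t\beta(s))$ one has $\partial_s x(s,0)=0$, $\frac{d}{dt}g(\partial_t x,\partial_s x)=\tfrac12\,\partial_s g(\partial_t x,\partial_t x)=\tfrac12 f'(s)$, whence $g(V(s),\alpha'(s))=\tfrac12 f'(s)$; the contradiction at the first zero $s_0$ of $f$ then goes through as you describe. One point you should make explicit is why $V(s_0)\neq 0$ (so that it is genuinely null rather than zero): since $U$ is convex normal it is causal, so the timelike curve $\alpha$ cannot return to $p$, hence $\beta(s_0)\neq 0$, and $V(s_0)=d(\exp_p)_{\beta(s_0)}(\beta(s_0))\neq 0$ because $\exp_p$ is a diffeomorphism on $\widetilde U$. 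Your reduction of the nonspacelike case to the timelike one via $J^+_U(p)\subset\overline{I^+_U(p)}$ and the homeomorphism $\exp_p$ is clean and correct.
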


For a space-time $(X,g)$, the Lorentz metric gives rise to a function $d=d_g: X\times X\to [0,\infty]$ called the Lorentz distance function of $g$ which we define following~\cite{BEE}.  Unlike the Riemannian distance function, the Lorentz distance function is not a metric (distance function) at all, in spite of its name.  All three of the defining properties of a distance function fail to hold for the Lorentz distance function.  In fact, the Lorentz distance function is neither finite-valued nor continuous in general, however these two conditions are guaranteed by global hyperbolicity, see Theorem~\ref{ghlorentzdistance} \cite[p. 11]{BEE}.

\begin{definition}
Let $(X,g)$ be a space-time, and let $\gamma:[a,b]\to X$ be a piecewise-smooth curve in $X$.  There are numbers $a_i\in[a,b], i=0,...,k$ satisfying $a=a_0<a_1<a_k=b$, such that $\gamma$ is smooth when restricted to each $[a_i,a_{i+1}],  i=0,...,k-1$.  We define the \emph{Lorentz arc-length} of $\gamma$ by $L(\gamma)=\sum_{i=0}^{k-1}\int_{a_i}^{a_{i+1}}\sqrt{-g_{\gamma(t)}(\mathbf{\gamma'(t)},\mathbf{\gamma'(t)})}~dt$.  Lorentz arc-length is independent of the choice of parametrization of $\gamma$.
\end{definition}

Given two events $p,q\in X$ with $q\in J^+(p)$, denote by $\Omega_{p,q}$ be the set of all piecewise-smooth nonspacelike curves in $X$ from $p$ to $q$.

\begin{definition}
We define the \emph{Lorentz distance function} $d: X\times X\to [0,\infty]$ by $d(p,q)=0$ if $q\notin J^+(p)$, and $d(p,q)=\sup\{L(\gamma):\gamma\in \Omega_{p,q}\}$ if $q\in J^+(p)$.
\end{definition}

In general, the Lorentz distance function fails to be symmetric.  The Lorentz distance from one event to another, distinct event may be zero. A reverse analogue of the triangle inequality holds for causally related events: if $p\leq q \leq r$ then $d(p,r)\geq d(p,q)+d(q,r)$, see \cite[p. 9]{BEE}.  These properties illustrate the fact that the Lorentz distance function is in fact \emph{not actually a distance function}.

\begin{theorem}\label{ghlorentzdistance}
See \cite[Corollary 4.7]{BEE}.  For a globally hyperbolic space-time $(X,g)$, 
the Lorentz distance function $d_g$ on $X\times X$ is continuous and it takes values in $\mathbb{R}$ 
$($rather than in $\mathbb{R}\sqcup \infty$$)$.
\end{theorem}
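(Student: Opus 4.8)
The plan is to prove the two assertions of the theorem — that $d_g$ is everywhere finite (real-valued) and that it is continuous — separately, invoking global hyperbolicity in each through the compactness of causal diamonds and through a smooth Cauchy temporal function (the latter provided by the Bernal--S\'anchez splitting theorem \cite[Theorem 1]{BernalSanchez}). \textbf{Finiteness.} If $q\notin J^+(p)$ then $d(p,q)=0$ by definition, so assume $q\in J^+(p)$ and put $K=J^+(p)\cap J^-(q)$, compact by global hyperbolicity; every future-directed nonspacelike curve from $p$ to $q$ lies in $K$. Fix a smooth function $\tau:X\to\mathbb{R}$ which is strictly increasing along every future-directed nonspacelike curve and whose gradient $\nabla\tau$ is timelike everywhere; then $-g(\nabla\tau,\nabla\tau)$ attains a positive minimum $c^2>0$ on $K$. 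For a future-directed nonspacelike $\gamma:[a,b]\to K$ from $p$ to $q$, the reverse Cauchy--Schwarz inequality for causal vectors gives, at each parameter value,
\[
\tfrac{d}{ds}\big(\tau\circ\gamma\big)(s)=d\tau(\gamma'(s))\;\ge\; c\,\sqrt{-g_{\gamma(s)}(\gamma'(s),\gamma'(s))},
\]
the right side vanishing where $\gamma'$ is null. Integrating and using monotonicity of $\tau$ along $\gamma$ yields $L(\gamma)\le \tfrac1c(\tau(q)-\tau(p))$, and the supremum over all such $\gamma$ gives $d(p,q)\le \tfrac1c(\tau(q)-\tau(p))<\infty$.

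\textbf{Lower semicontinuity} (no global hyperbolicity needed). Let $(p_n,q_n)\to(p,q)$. If $d(p,q)=0$ there is nothing to prove since $d\ge 0$. Otherwise $p\ll q$; given $\varepsilon>0$, choose a piecewise-smooth future-directed timelike curve $\sigma$ from $p$ to $q$ with $L(\sigma)>d(p,q)-\varepsilon$, and pick interior points $a,b$ of $\sigma$ close enough to the endpoints that the two terminal subarcs have total length $<\varepsilon$. Since $a\in I^+(p)$ and $b\in I^-(q)$ are open conditions on the endpoints, for large $n$ we have $p_n\ll a$ and $b\ll q_n$; concatenating a timelike curve from $p_n$ to $a$, the middle subarc of $\sigma$ from $a$ to $b$, and a timelike curve from $b$ to $q_n$ produces a timelike curve from $p_n$ to $q_n$ of length $>d(p,q)-2\varepsilon$. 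Hence $\liminf_n d(p_n,q_n)\ge d(p,q)$.

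\textbf{Upper semicontinuity} (uses global hyperbolicity), by a limit-curve argument. Suppose, for contradiction, that $(p_n,q_n)\to(p,q)$ with $d(p_n,q_n)\to L>d(p,q)$ along a subsequence. Choose $p'\ll p$ and $q'\gg q$; then for large $n$ we have $p'\ll p_n$ and $q_n\ll q'$, so every causal curve from $p_n$ to $q_n$ lies in the compact set $K'=J^+(p')\cap J^-(q')$. Pick future-directed nonspacelike curves $\gamma_n\subset K'$ from $p_n$ to $q_n$ with $L(\gamma_n)>d(p_n,q_n)-\tfrac1n$; after reparametrizing them (say by $\tau$-value) to a common domain, the limit curve theorem — applicable because the $\gamma_n$ are causal curves confined to a compact subset of a strongly causal spacetime — produces a subsequence converging to a future-directed nonspacelike curve $\gamma$ from $p$ to $q$. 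Lorentzian arc-length is upper semicontinuous under this convergence, so $L(\gamma)\ge\limsup_n L(\gamma_n)=L$; but $L(\gamma)\le d(p,q)$, contradicting $L>d(p,q)$. Thus $\limsup_n d(p_n,q_n)\le d(p,q)$, and combined with lower semicontinuity this shows $d_g$ is continuous.

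\textbf{Main obstacle.} The crux is the upper-semicontinuity step. One must (i) confirm that the limit curve theorem applies once the curves are trapped in a fixed compact subset of a strongly causal spacetime, being careful about the choice of parametrization used to pass to a uniformly convergent subsequence, and (ii) use the fact — special to the Lorentzian causal setting, and opposite to the Riemannian case where length is lower semicontinuous — that Lorentzian arc-length is upper semicontinuous with respect to $C^0$ convergence of causal curves. The finiteness and lower-semicontinuity steps are comparatively routine.
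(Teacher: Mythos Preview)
The paper does not supply its own proof of this theorem: it is stated in the appendix purely as a citation to \cite[Corollary 4.7]{BEE}, with no argument given. So there is no ``paper's approach'' to compare against; your proposal is a self-contained sketch of the standard proof one finds in Beem--Ehrlich--Easley and related references.

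That said, your outline is essentially correct and follows the classical route. The finiteness step via a temporal function and the reverse Cauchy--Schwarz inequality is clean; the lower-semicontinuity argument is the standard one using openness of $I^\pm$. For the upper-semicontinuity step your identification of the two delicate points is accurate: (i) the limit-curve theorem does apply once the $\gamma_n$ are trapped in a fixed compact $K'$ of a strongly causal spacetime, and parametrizing by $h$-arclength for an auxiliary Riemannian metric (rather than by $\tau$) is the cleanest way to get equicontinuity and a uniformly convergent subsequence with limit curve from $p$ to $q$; (ii) upper semicontinuity of Lorentzian arc-length under $C^0$ convergence of causal curves is the genuinely nontrivial ingredient you are invoking without proof. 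One small addendum: you should note that the case $L=\infty$ cannot arise, since your finiteness bound applied on the larger diamond $K'=J^+(p')\cap J^-(q')$ already gives a uniform upper bound on $d(p_n,q_n)$ for large $n$; this keeps the contradiction argument honest.
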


\begin{remark}
As opposed to the Riemannian geometry, timelike geodesics in a Lorentz manifold locally \emph{maximize} arc-length, see for instance \cite[p. 146]{BEE}.  We conclude the discussion of Lorentz distance with the following useful definition.
\end{remark}

\begin{definition}
Let $(X,g)$ be a space-time.  A \emph{local Lorentz distance function} on $(X,g)$ is a convex normal neighborhood $U\subset X$ equipped with the Lorentz distance function $D: U\times U\to \mathbb{R}$ of the space-time $(U,g|U)$.  Therefore, for $x,y\in U$ we have $D(x,y)=0$ if  there does not exist a future-directed timelike geodesic segment in $U$ from $x$ to $y$.  Otherwise $D(x,y)$ is the Lorentz arc-length of the unique such geodesic segment in $U$.
\end{definition}

\begin{prop}
See \cite[Theorem 4.27]{BEE}.  A space-time is strongly causal if and only if each event has a convex normal neighborhood on which the local Lorentz distance function agrees with the (global) Lorentz distance function of the space-time.
\end{prop}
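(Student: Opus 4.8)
The plan is to prove the two implications separately. Observe first that on $U\times U$ for any convex normal neighborhood $U$ one always has $D(x,y)\le d(x,y)$, since a causal curve contained in $U$ is a causal curve in $X$, and since in a convex normal neighborhood the Lorentz distance of $(U,g|_U)$ is the length of the unique connecting geodesic when that geodesic is future timelike and is $0$ otherwise (timelike geodesics locally maximize arc-length, cf.\ \cite[p.~146]{BEE})---this is precisely how $D$ was defined. So the substance of the theorem is the reverse inequality in the appropriate situations.

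\emph{Strongly causal $\Rightarrow$ $d=D$ locally.} Given $p$, use Proposition~\ref{mainprop1} to pick a neighborhood $U$ of $p$ that is simultaneously convex normal and causally convex. For $x,y\in U$, causal convexity forces every causal curve in $X$ from $x$ to $y$ to lie entirely in $U$ (otherwise it would meet $U$ in a disconnected set), so the supremum of Lorentz arc-lengths of causal curves from $x$ to $y$ is the same computed in $X$ or in $U$; by the previous paragraph this common value is $D(x,y)$, and by definition it is $d(x,y)$. When $y\notin J^+_X(x)$, or $y\in J^+_X(x)\setminus I^+_X(x)$, the same causal convexity together with Proposition~\ref{3.4BEE} shows that the connecting geodesic in $U$, if it exists, is not future timelike, so both $d(x,y)$ and $D(x,y)$ vanish.

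\emph{$d=D$ locally $\Rightarrow$ strongly causal.} Suppose each $p$ has a convex normal $U_p$ with $d|_{U_p\times U_p}=D$. Since $d(x,y)>0$ if and only if $y\in I^+_X(x)$, the equality $d=D$ localizes the chronology relation: $I^{\pm}_X(x)\cap U_p=I^{\pm}_{U_p}(x)$ for all $x\in U_p$. Next, any Alexandrov set $I^+_X(a)\cap I^-_X(b)$ is automatically causally convex in $X$, because a causal curve $\gamma$ with endpoints there satisfies $a\ll\gamma(0)\le\gamma(t)\le\gamma(1)\ll b$, hence $\gamma(t)\in I^+_X(a)\cap I^-_X(b)$. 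Thus it suffices to produce, for a prescribed neighborhood $N\subseteq U_p$ of $p$, points $a\ll_{U_p}p\ll_{U_p}b$ with $I^+_X(a)\cap I^-_X(b)\subseteq U_p$: the localization then gives $I^+_X(a)\cap I^-_X(b)=I^+_{U_p}(a)\cap I^-_{U_p}(b)$, a genuine causally convex neighborhood of $p$, and these shrink inside $N$ as $a,b\to p$ since small diamonds form a neighborhood basis in a convex normal neighborhood. Strong causality at $p$ follows.

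The remaining step, the containment $I^+_X(a)\cap I^-_X(b)\subseteq U_p$ for $a,b$ near $p$, is the main obstacle. I would first shrink $U_p$ so that it has compact closure (possible by local compactness; $d=D$ persists on the smaller set), and then argue by contradiction via limit curves: if the containment fails along sequences $a_n,b_n\to p$, there are points $z_n\notin U_p$ and timelike curves $\mu_n$ from $a_n$ through $z_n$ to $b_n$ of Lorentz length at most $d(a_n,b_n)=D(a_n,b_n)\to 0$; passing to limits of the sub-arcs of $\mu_n$ lying in $\overline{U_p}$ (whose endpoints on $\partial U_p$ subconverge by compactness) should yield a zero-length closed causal curve through $p$, which---if it is not a smooth null geodesic---can be deformed to a timelike loop at $p$ (cf.\ \cite[Lemma~2.16]{Penrose}), forcing $d(p,p)=\infty\ne 0=D(p,p)$, a contradiction. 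The delicate case is a smooth closed null geodesic through $p$ (together with the possibility that the excursions $z_n$ leave every compact set); ruling this out requires a finer argument using that $d=D$ holds near every point along the geodesic, and I expect that to be where most of the effort lies.
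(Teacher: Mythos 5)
The paper does not actually prove this statement---it is quoted as background from \cite[Theorem 4.27]{BEE}---so your proposal can only be judged on its own merits. Your forward direction is essentially right: causal convexity of a convex normal neighborhood $U$ of $p$ traps every causal curve between points of $U$ inside $U$, so $d=d_U=D$ there (one caveat: you invoke Proposition~\ref{mainprop1}, whose proof in this paper itself cites \cite[Theorem 4.27]{BEE}, so to avoid circularity you should justify the existence of causally convex convex normal neighborhoods independently, which is a standard separate fact).

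The converse, however, has a genuine gap, and you have identified its location yourself: the containment $I^+_X(a)\cap I^-_X(b)\subseteq U_p$ is never established, and the limit-curve sketch you offer to obtain it aims at the wrong contradiction. Failure of strong causality (or of your containment) does \emph{not} produce a closed causal curve through $p$: the classical examples of causal but non--strongly-causal space-times contain only \emph{almost} closed causal curves and no closed ones, so "a zero-length closed causal curve through $p$" is in general unattainable, and the deformation step via \cite[Lemma 2.16]{Penrose} then has nothing to act on. The mechanism that actually works is the contrapositive in a different form: if strong causality fails at $p$, take the convex normal $U_p$ with $d=D$; for arbitrarily small neighborhoods one gets a causal curve that starts at some $x$ near $p$, leaves $U_p$ (a small convex neighborhood is causal, indeed strongly causal, as its own space-time, so the offending curve cannot stay inside), and returns to a point $y$ near $p$. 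Concatenating with short timelike segments near $p$, this excursion is not a null geodesic, hence can be pushed off to a timelike curve, giving $y\in I^+_X(x)$ and so $d(x,y)>0$; but $x,y$ can be arranged inside $U_p$ so that $y\notin I^+_{U_p}(x)$ (e.g.\ $y$ lies at or below the level of $x$ in normal coordinates), whence $D(x,y)=0\neq d(x,y)$. Your "delicate case" of a smooth closed null geodesic and of excursions escaping every compact set is thus not a residual technicality to be handled later---it signals that the closed-curve strategy itself must be replaced by the almost-closed-curve argument above. As written, the converse implication is not proved.
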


\end{document}